\definecolor{R}{RGB}{255, 0, 34}
\definecolor{B}{RGB}{0, 85, 238}
\newtheorem{theorem}{Theorem}
\newtheorem{lemma}{Lemma}
\newtheorem{proposition}{Proposition}  
\newtheorem{corollary}{Corollary}
\numberwithin{theorem}{subsection}
\numberwithin{proposition}{subsection}
\numberwithin{lemma}{subsection}
\numberwithin{claim}{subsection}
\numberwithin{corollary}{subsection}
\numberwithin{conjecture}{subsection}
\numberwithin{definition}{subsection}
\numberwithin{remark}{subsection}
\newcommand{\beq}{\begin{equation}}
\newcommand{\eeq}{\end{equation}}
\newcommand{\beqa}{\begin{eqnarray}}
\newcommand{\eeqa}{\end{eqnarray}}
\newcommand{\beaa}{\begin{eqnarray*}}
\newcommand{\ben}{\begin{eqnarray*}}
\newcommand{\eaa}{\end{eqnarray*}}
\newcommand{\een}{\end{eqnarray*}}
\newcommand{\CC}{\mathbb{C}}
\newcommand{\ZZ}{\mathbb{Z}}
\newcommand{\PP}{\mathbb{P}}
\newcommand{\ii}{\mathbf{i}}
\newcommand{\op}{\operatorname}
\renewcommand{\O}{\mathcal{O}}
\newcommand{\F}{\mathcal{F}}
\newcommand{\M}{\mathcal{M}}
\title[K-theoretic Heisenberg algebras and KGW theory]
{K-theoretic Heisenberg algebras and permutation-equivariant
  Gromov--Witten theory}
\author{Todor Milanov }
\address{Kavli IPMU (WPI), UTIAS, The University of Tokyo, Kashiwa, Chiba 277-8583, Japan}
\email{todor.milanov@ipmu.jp}
\thanks{
{\em Key words and phrases:} Gromov--Witten theory, finite groups,
integrable systems }
\begin{document}

\begin{abstract}
We found an interesting application of the K-theoretic Heisenberg
algebras of Weiqiang Wang to the foundations of permutation 
equivariant K-theoretic Gromov--Witten theory. We also 
found an explicit formula for the genus 0 correlators in the
permutation equivariant Gromov--Witten theory of the point. In the
non-equivariant limit our formula reduces to a well known formula due
to Y.P. Lee. 
\end{abstract}

\maketitle

\setcounter{tocdepth}{2}
\tableofcontents

\section{Introduction}

K-theoretic Gromov--Witten (KGW) theory was introduced by Givental (see \cite{Giv2000}) and Y.P. Lee (see \cite{Lee2004}) as a generalization of the cohomological Gromov--Witten (GW) theory. One of the fundamental open problems in KGW theory is to compute the KGW invariants of the point. The genus-0 and genus-1 invariants were computed respectively in \cite{Lee1997} and \cite{LQ2014}. In the cohomological case, it is well known that the GW invariants of the point are governed by the KdV hierarchy. We expect that the KGW invariants of the point are also governed by an integrable hierarchy but so far even a conjectural description is missing. Let us point out however, that in the genus-0 case, the KGW invariants of the point are governed by a hierarchy which is a Miura transform of the dispersionless KdV hierarchy (see \cite{MT2018}). Therefore, it is quite possible that in higher-genus, the KGW invariants of the point are again solutions to the KdV hierarchy but after some complicated Miura transformation. 

There is a very interesting recent development in KGW theory which to
some extend shows that the definition of the KGW invariants must be
extended. Namely, Givental observed that in the settings of toric
geometry the so-called quantum Lefschetz hyperplane principle
fails. In particular, many natural constructions in mirror symmetry,
such as, constructing the mirror of a toric hypersurface, will fail in
KGW theory too.   In order to resolve these issues, Givental proposed an extension of KGW theory which is now called 
{\em permutation-equivariant K-theoretic Gromov--Witten} theory (see \cite{Giv2015Aug}). We believe that permutation-equivariant KGW invariants have better properties because one can use mirror symmetry to compute them. In particular, if we view the point as a hyperplane in $\PP^1$, we can construct a 2-dimensional mirror model for the permutation-equivariant KGW invariants of the point.  We would like to use this mirror model and ideas from the Eynard--Orantin recursion to compute the permutation-equivariant KGW invariants of the point. The first step in our project is to compute the genus-0 parmutation-equivariant KGW invariants of the point, that is, we would like to know the analogue of Y.P. Lee's formula \cite{Lee1997}. In some sense, this is the main motivation for this paper. Our computation is based on the techniques developed in \cite{MT2018}. Namely, we prove that the genus-0 permutation-equivariant KGW invariants are governed by an integrable hierarchy of hydrodynamic type. Using the differential equations we were able to find a closed formula for the genus-0 permutation-equivariant KGW invariants of the point (see Theorem \ref{thm:corr-0}). Let us point out that we do not work with the most general version of permutation-equivariant KGW theory, i.e., we do not allow descendants at the permutable marked points. Nevertheless, the generality that we consider should be enough for the applications that we have in mind. 

\subsection{K-theoretic Fock spaces}\label{sec:K-Fock}
The definition of permutation-equivariant KGW invariants will be recalled later on (see Section \ref{sec:pec}). From the very beginning it is clear that we are dealing with representations of the symmetric groups. However, finding a good algebraic formalism to capture the entire information is a bit tricky. We refer to \cite{Giv2015Aug} for more details on the logic used by Givental. One of the surprises in the current paper (at least to the author) is that there
is a slightly different way to organize the invariants which is equivalent to the choice made by Givental but proving the equivalence requires some non-trivial work. 

Our logic is the following. Suppose that $X$ is a smooth projective
variety.  Let $\overline{\M}_{g,n+k}(X,d)$ be the moduli space of
degree $d$ stable maps from a genus-$g$ nodal Riemann surface equipped
with $n+k$ marked points. We let the symmetric group $S_k$ act on the
moduli space by permuting the last $k$ marked points. We will also
refer to the last $k$ marked points as {\em permutable} marked
points. Let 
\ben
\op{ev}_{(n)}: \overline{\M}_{g,n+k}(X,d)\to X^n
\een
and 
\ben
\op{ev}^{(k)}:
\overline{\M}_{g,n+k}(X,d)\to X^k
\een
be the evaluation maps at respectively the first $n$ and the last $k$ marked points. The permutation-equivariant KGW invariants contain information about the $S_k$-equivariant K-theoretic pushforward $\op{ev}^{(k)}_*$. Therefore, it is natural to introduce the following graded vector
space:
\ben
\F(X)=\bigoplus_{n\geq 0} K_{S_n} (X^n),\quad K_{S_0}(X^0):=\CC,
\een
where $S_n$ is the symmetric group and $K_{S_n}(X^n)$ denotes the
$S_n$-equivariant topological K-ring with complex coefficients (see
\cite{Segal1968}). We will consider the case when $K^1(X)=0$ in order
to make the construction more transperant. The case when $K^1(X)\neq
0$ is similar. The vector space $\F(X)$ was investigated before by
Weiqiang Wang (see \cite{Wang2000}). Partially motivated by the work
of Grojnowski (see \cite{Gro1996}) and Nakajima (see \cite{Nak1997})
and following ideas of Segal, Wang proved that $\F(X)$ is a Fock
space, i.e., it is an irreducible representation of a Heisenberg
algebra. Let us outline Wang's construction with some minor modification.
The vector space $\F(X)$ is  naturally a commutative graded algebra
with multiplication defined by the induction operation (see Section
\ref{sec:ind}) 
\ben
K_{S_n}(X^n)\times
K_{S_m}(X^m)\to 
K_{S_{n+m}}(X^{m+n}),\quad
(E,F)\mapsto \op{Ind}_{S_n\times S_m}^{S_{n+m}}(E\boxtimes F).
\een
On the other hand, each graded piece $K_{S_n}(X^n)$ is a module over
the representation ring $R(S_n)$ of the symmetric group. It is well
known (see \cite{Bump2014}, Proposition 39.4) that $R(S_n)$ has a {\em
  virtual} representation $p_n$, that is, 
linear combination of irreducible representations with integer
coefficients, such that, its character is
\ben
\chi_{p_n}(g) =
\begin{cases}
  n & \mbox{ if } g=(1,2,\dots,n),\\
  0 & \mbox{ otherwise }.
\end{cases}
\een
Note that $X^n\times
p_n$ can be viewed as a virtual $S_n$-equivariant vector bundle on
$X^n$. Given $E\in K_{S_n}(X^n)$, we denote by $E\otimes p_n\in
K_{S_n}(X^n)$ the tensor product of $E$ and the trivial bundle
$X^n\times p_n$.
Let $\{\Phi_\alpha\}_{\alpha=1}^N$ be a basis of the topological
K-ring $K(X)$. Let $\{\Phi^\alpha\}_{\alpha=1}^N$ be the basis dual
to the above one with respect to the Euler pairing, that is,
$(\Phi^\alpha,\Phi_\beta)=\delta_{\alpha,\beta}$ where 
$(E,F):=\chi(E\otimes F)$. Let
$\nu_{n,\alpha}:=(\Phi^\alpha)^{\boxtimes n}\otimes p_n$.
\begin{proposition}[Wang]\label{prop:wang}
  The Fock space $\F(X)=\CC[\nu_{n,\alpha}(1\leq \alpha\leq N, n\geq
  0)]$, that is, $\nu_{n,\alpha}$ generate freely $\F(X)$ as a commutative algebra. 
\end{proposition}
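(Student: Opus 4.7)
The strategy is to exhibit an explicit isomorphism between $\F(X)$ and a polynomial algebra on countably many generators indexed by $(n,\alpha)$, using the character/fixed--locus decomposition of $S_n$-equivariant K-theory, and then to check that $\nu_{n,\alpha}$ is sent to the $(n,\alpha)$-generator. The underlying template is the classical proof (going back to Segal) that $\bigoplus_n R(S_n)\otimes\CC$ is a polynomial ring on the power sums $p_1,p_2,\ldots$, suitably enriched with coefficients in $K(X)$.

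\textbf{Step 1 (fixed-locus decomposition).} Since complex coefficients are used, the Atiyah--Segal decomposition gives, for any finite group $G$ acting on a reasonable space $Y$,
\[
K_G(Y)\;\cong\;\bigoplus_{[g]} K\!\left((Y^g)/Z_G(g)\right),
\]
with the sum over conjugacy classes of $G$. Applied to $G=S_n$ acting on $X^n$ by permutation of factors, conjugacy classes are parametrized by partitions $\lambda=(1^{m_1}2^{m_2}\cdots)\vdash n$; a representative of type $\lambda$ has fixed locus $\prod_i X^{m_i}$ and centralizer $\prod_i (\ZZ/i\ZZ)^{m_i}\rtimes S_{m_i}$, with the $(\ZZ/i\ZZ)^{m_i}$-factor acting trivially on the fixed locus. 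Summing over $n\geq 0$ and regrouping by the multiplicities $(m_i)$ yields an isomorphism of graded vector spaces
\[
\F(X)\;\cong\;\bigotimes_{i\geq 1} \op{Sym}^{\bullet}\!\bigl(K(X)\bigr),
\]
i.e.\ a polynomial algebra on the countable set of generators $\{(i,\alpha):i\geq 1,\,1\leq \alpha\leq N\}$.

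\textbf{Step 2 (identify generators and multiplication).} Using the defining property of $p_n\in R(S_n)$---its character is $n$ on an $n$-cycle and zero elsewhere---the image of $\nu_{n,\alpha}$ under Step~1 lives entirely in the summand corresponding to the partition $\lambda=(n)$, and equals $(\Phi^\alpha)^n\in K(X)$ restricted to the small diagonal, up to the nonzero scalar $n$. Hence the $\nu_{n,\alpha}$ hit exactly the generators of the target polynomial algebra. It remains to match the induction product on $\F(X)$ with the product on $\bigotimes_{i\geq 1}\op{Sym}^\bullet K(X)$; by the Mackey formula for $S_n\times S_m\hookrightarrow S_{n+m}$ this reduces to the statement that induction combines cycle-type supports additively, which is the Frobenius characteristic identity upgraded with the bundle coefficient $\Phi^\alpha$. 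The proposition then follows from Newton's theorem that power sums freely generate the ring of symmetric functions.

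\textbf{The main difficulty.} I expect the principal technical obstacle to lie in Step~2, in verifying the compatibility of induction with the symmetric-algebra product under the fixed-locus isomorphism. One must track how an induced class $\op{Ind}_{S_n\times S_m}^{S_{n+m}}(E\boxtimes F)$ decomposes into its cycle-type components, while simultaneously controlling the restriction of the external product $(\Phi^\alpha)^{\boxtimes n}$ to the various partial diagonals. This is essentially a K-theoretic refinement of the Frobenius characteristic map, so the work is algebraic manipulation with characters, but the bookkeeping of centralizers, scalar factors ($1/|\op{stab}|$ contributions and the normalization $n$ in $\chi_{p_n}$), and diagonal pullbacks needs to be carried out carefully before one can invoke Newton's theorem in the final step.
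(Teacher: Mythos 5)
The paper itself does not prove Proposition \ref{prop:wang}: it imports it from Wang's paper (Proposition 3 there), so there is no in-paper argument to compare yours against line by line. Your sketch follows essentially the same (Segal--Wang) route as the cited proof: decompose $K_{S_n}(X^n)\otimes\CC$ into sectors indexed by conjugacy classes (partitions), read off the graded dimension of a polynomial algebra, then identify the images of the $\nu_{n,\alpha}$ and check that the trace (characteristic) map is multiplicative for the induction product. Two remarks. First, a small but real correction in your Step 2: the component of $\nu_{n,\alpha}$ in the $n$-cycle sector is not the restriction of $(\Phi^\alpha)^{\boxtimes n}$ to the small diagonal up to the factor $n$, but $n\,\psi^n(\Phi^\alpha)$ --- the trace of the cyclic permutation on $(\Phi^\alpha)^{\otimes n}$ is the Adams operation (Lemma \ref{le:Adams}). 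This is harmless since $\psi^n$ is invertible on $K(X)\otimes\CC$, but it is exactly the twist the paper emphasizes when comparing normalizations with Wang. Second, the verification you defer is where all the content lies, and it is precisely what the paper's Section 2 toolkit is built to handle: the double-coset classification (Proposition \ref{prop:dc}), the Mackey-type formula (Proposition \ref{prop:IR-RI}), and the trace of an induced bundle (Proposition \ref{prop:induced_trace}). A more economical variant avoids proving full multiplicativity: by the induced-trace formula, the trace of a monomial $\nu_{n_1,\alpha_1}\cdots\nu_{n_r,\alpha_r}$ vanishes on every class except that of cycle type $(n_1,\dots,n_r)$, where it equals, up to nonzero constants, the symmetrization of $\psi^{n_1}(\Phi^{\alpha_1})\otimes\cdots\otimes\psi^{n_r}(\Phi^{\alpha_r})$; this gives linear independence of the monomials directly, and your Step 1 dimension count then forces surjectivity, with no need to invoke Newton's theorem. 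Alternatively, the commutation relations $[\iota_{-m}(\Phi_\beta),\nu_{l,\alpha}]=m\,\delta_{m,l}\delta_{\alpha,\beta}$, established in the proof of Theorem \ref{thm:Heisenberg} independently of the proposition, also yield algebraic independence. So your plan is sound, but as written it is an outline: the bookkeeping you flag must actually be carried out (or replaced by one of the shortcuts above), and the identification of generators should be stated with $\psi^n(\Phi^\alpha)$.
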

We refer to \cite{Wang2000}, Proposition 3 for the proof of
Proposition \ref{prop:wang}. In fact, the result of Wang is more
general. He considered the case when $X$ is a $G$-space where $G$ is a
finite group. The corresponding Fock space $\F_G(X)$ could be viewed
as the Fock space of the orbifold $[X/G]$. From this point of view,
the work of Wang should have a generalization to the case when $X$ is
an orbifold which is not necessarily a global quotient. The orbifold
case will be important for the applications to permutation-equivariant
orbifold KGW theory.

We think of $1\in \F(X)$ as the vacuum and of multiplication by
$\nu_{n,\alpha}$ as the creation operations.
It turns out that the operators of differentiation by
$r\tfrac{\partial}{\partial \nu_{r,\alpha}}$  also have a natural
K-theoretic interpretation. Namely, the following formula holds:
\beq\label{heis_rel}
r\partial_{\nu_{r,\alpha}} (E) = \op{tr}_{(1,2,\dots,r)} \pi^{(n-r)}_*\Big(
\Phi_\alpha^{\boxtimes r} \boxtimes 1^{\boxtimes (n-r)} \otimes
\op{Res}^{S_n}_{S_r\times S_{n-r}}(E)
\Big),
\eeq
where $E\in K_{S_n}(X^n)$, $\pi^{(n-r)}: X^n=X^r\times X^{n-r}\to X^{n-r}$ is the
projection map, and $ \op{Res}^{S_n}_{S_r\times S_{n-r}}$ is the
restriction functor (see Section \ref{sec:ind}). Here the K-theoretic
pushforward $\pi^{(n-r)}_*$ yields a virtual $S_{n-r}$-bundle whose
coefficients are representations of $S_r$ and hence after taking the
trace $\op{tr}_{(1,2,\dots,r)}$ we obtain an element in
$K_{S_{n-r}}(X^{n-r})$. Let us point out that our construction of the
Fock space is slightly different from the one in
\cite{Wang2000}. Namely, the multiplication operator of Wang involves
the inverse of the Adam's operations in $K(X)$ while the differential
operator $r\partial_{\nu_{r,\alpha}}$ is represented by a contraction
operation that involves a choice in the dual vector space
$K(X)^\vee$. The proof of \eqref{heis_rel} could be obtained directly
from Wang's results but for the sake of completeness we give a
self-contained proof (see Theorem \ref{thm:Heisenberg}).

We will refer to $\F(X)$ as the K-theoretic Fock space of $X$. We will
think of $\nu=(\nu_1,\nu_2,\dots)$, where 
$\nu_r=(\nu_{r,a})_{1\leq a\leq N}$, as formal parameters. The
KGW invariants of $X$ with values in $\F(X)$ are defined as follows:
\beq\label{KGW-inv}
\langle \Phi_{a_1} L_1^{i_1},\dots, \Phi_{a_n} L_n^{i_n}
\rangle_{g,n}(\nu):=
\sum_{k=0}^\infty \sum_d Q^d \op{ev}^{(k)}_*\left(
  \O_{g,n+k,d}\otimes L_1^{i_1}\otimes \cdots \otimes L_n^{i_n}
  \op{ev}_{(n)}^*(\Phi_{a_1}\boxtimes \cdots \boxtimes \Phi_{a_n})
\right),
\eeq
where $L_i$ is the tautological line bundle formed by the cotangent
lines at the $i$-th marked point, $\O_{g,n+k,d}$ is the virtual
structure sheaf of the  moduli space $\overline{\M}_{g,n+k}(X,d)$ (see \cite{Lee2004}), and the pushforward is the $S_k$-equivariant $K$-theoretic pushforward, that is,
\ben
\op{ev}^{(k)}_*:
K_{S_k}(\overline{\M}_{g,n+k}(X,d))\to K_{S_k}(X^k)\subset
\F(X). 
\een
We will usually drop the virtual structure sheaf in the above
notation. Also, let us introduce formal parameters $t_{i,a}$ and
introduce the following formal Laurent series:
\ben
\mathbf{t}(q)=\sum_{i\in \ZZ} \sum_{a=1}^N t_{i,a} \Phi_a q^i.
\een
Then the KGW invariants with values in $\F(X)$ can be organized into
a set of formal power series in $\mathbf{t}$ of the following form:
\beq\label{nu-cor}
\langle \mathbf{t}(L_1),\dots,\mathbf{t}(L_n)\rangle_{g,n}(\nu)=
\sum_{i_1,\dots,i_n\in \ZZ}
\sum_{a_1,\dots,a_n=1}^N
t_{i_1,a_1}\cdots t_{i_n,a_n}\, 
\langle \Phi_{a_1} L_1^{i_1},\dots, \Phi_{a_n} L_n^{i_n}
\rangle_{g,n}(\nu).
\eeq
The permutation-equivariant KGW invariants of Givental (see
\cite{Giv2017}) will be recalled in Section \ref{sec:pec}. Using the
Heisenberg relations, that is, formula \eqref{heis_rel}, we will prove
in Section \ref{sec:pec} that the correlators \eqref{nu-cor} are
related to the correlators of Givental by a simple substitution (see
\eqref{substitution}). Let us point out that the above definition
\eqref{nu-cor} does not contain the entire information of
permutation-equivariant KGW theory. We did not allow descendants at
the permutable marked points. Our construction can be extended
naturally by introducing the K-theoretic Fock space $\F(X\times
\ZZ)$. We refer to Section \ref{sec:perm-desc} for more details.
However, let us point out that in this paper the most general
definition would not be needed. 

\subsection{Genus-0 integrable hierarchies}

The standard identities in KGW theory, after some minor
modifications, extend to permutation-equivariant KGW theory (see
\cite{Giv2015Oct}). More precisely, the correlators \eqref{nu-cor}
satisfy the string equation (see Section \ref{sec:string}) and if the genus is
0, then we also have the dilaton equation (see Section
\ref{sec:dilaton}) and the WDVV equations (see Section
\ref{sec:wdvv}). These identities allow us to introduce
permutation-equivariant K-theoretic quantum differential equations and
a corresponding fundamental solution called the $S$-matrix.  
Let $G$ be the $N\times N$ matrix with entries 
\beq\label{metric:G}
G_{ab}(\nu)=(\Phi_a,\Phi_b)+\langle\Phi_a,\Phi_b\rangle_{0,2}(\nu),
\eeq
where $(\Phi_a,\Phi_b)=\chi(\Phi_a\otimes \Phi_b)$ is the
Euler pairing. Let $G^{ab}(\nu)$ be the entries of the inverse matrix
$G^{-1}$. The $S$-matrix is defined by the following identity:
\beq\label{S-matrix}
G(S(\nu,q)\Phi_a,\Phi_b):= (\Phi_a,\Phi_b) +
\Big\langle \frac{\Phi_a}{1-q^{-1}L},\Phi_b
\Big\rangle_{0,2}(\nu). 
\eeq
Note that $S(\nu,q)$ is a formal power series in
$\nu$ (and the Novikov variables $Q$) whose coefficients are in
$\op{End}(K(X))(q)$. We refer to  Proposition \ref{prop:S-matrix} for
a list of properties of the $S$-matrix.
Let us define also the K-theoretic quantum cup product by 
\beq\label{K-quantum}
G(\Phi_i\bullet\Phi_j,\Phi_k):= 
\langle
\Phi_i,\Phi_j,\Phi_k
\rangle_{0,3}(\nu),\quad 1\leq i,j,k\leq N.
\eeq
Let us recall also the so-called $J$-function:
\ben
J(\nu,q):=1-q+\nu_1 +
\sum_{a=1}^N
\Phi^a\,
\Big \langle \frac{\Phi_a}{1-qL}
\Big\rangle_{0,1}(\nu).
\een
Using the string equation, it is easy to prove that $J(\nu,q)=(1-q)
S(\nu,q)^{-1} 1$.

Suppose that $v(\mathbf{t},\nu_2,\nu_3,\dots)=\sum_{\alpha=1}^N v_\alpha(\mathbf{t},\nu_2,\nu_3,\dots) \Phi_\alpha$ is a formal power series in $\mathbf{t}=(t_{k,\alpha})$ and $\nu_2,\nu_3,\dots$ with coefficients in $K(X)$. We will think of $\nu_2,\nu_3,\dots$ as parameters and quite often we suppress them in the notation. For example, we will write simply $v(\mathbf{t})$ instead of $v(\mathbf{t},\nu_2,\nu_3,\dots)$. Furthermore, we identify $\nu_1:=v(\mathbf{t})$, that is, $\nu_{1,\alpha}:=v_\alpha(\mathbf{t})$ and write $S(v(\mathbf{t}),q):= S(\nu,q)|_{\nu_1=v(\mathbf{t})}$. The following system of differential equations is the K-theoretic version of Dubrovin's principal hierarchy (see \cite{Du1996}):
\beq\label{KGW:ph}
\partial_{t_{n,\alpha}} v(\mathbf{t})= 
-\op{Res}_{q=\infty} dq (q-1)^{n-1} 
\partial v(\mathbf{t})\bullet 
S(v(\mathbf{t}),q)\Phi_\alpha
\quad 
(n\geq 0, \ 1\leq \alpha\leq N),
\eeq
where $\partial=\partial_{t_{0,1}}$ where $\Phi_1:=1\in
K(X)$. According to Milanov--Tonita (see \cite{MT2018}, Theorem 1),
the fact that $S(\nu,q)$ is a solution to the quantum differential
equations implies that the system of equations \eqref{KGW:ph} is
integrable, i.e., the system is compatible. The second goal of our
paper is to construct a solution to \eqref{KGW:ph} in terms of KGW invariants. Following Dubrovin, we will refer to this solution as the {\em topological solution}. 
The construction is the same as in \cite{MT2018}, Theorem 2.  
Put 
\ben
\mathbf{t}(q)=\sum_{k=0}^\infty \sum_{\alpha=1}^N 
t_{k,\alpha} \Phi_\alpha (q-1)^k.
\een
Note that the notation $t_{k,\alpha}$ here is slightly different from
the corresponding notation in \eqref{nu-cor}. 
\begin{theorem}\label{thm:top_sol}
Let $w(\mathbf{t},\nu_2,\nu_3,\dots)=\sum_{\alpha=1}^N w_\alpha (\mathbf{t},\nu_2,\nu_3,\dots) \Phi^\alpha$ be defined by 
\ben
\left.
w_\alpha (\mathbf{t},\nu_2,\nu_3,\dots) =
\partial_{t_{0,\alpha}}\partial_{t_{0,1}} \F^{(0)}(\mathbf{t})
\right|_{\nu_1=0}=
\sum_{n=0}^\infty 
\frac{1}{n!}\, \langle 
1,\Phi_\alpha, \mathbf{t}(L),\dots,\mathbf{t}(L)
\rangle_{0,2+n}(0,\nu_2,\nu_3,\dots).
\een
Let $v(\mathbf{t})$ be a solution to the equation
$J(v(\mathbf{t}),0)=1+w(\mathbf{t})$ where we suppressed the
dependence of $v$ and $w$ on $\nu_2,\nu_3,\dots$. Then $v(\mathbf{t})$
is a solution to the principal hierarchy \eqref{KGW:ph}.  
\end{theorem}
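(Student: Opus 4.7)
The plan is to adapt the proof of Theorem 2 in \cite{MT2018} to the permutation-equivariant setting. Fix $n\geq 0$ and an index $\alpha$; the goal is to verify the single equation
\ben
\partial_{t_{n,\alpha}}v(\mathbf{t}) = -\op{Res}_{q=\infty} dq\,(q-1)^{n-1}\, \partial v(\mathbf{t})\bullet S(v(\mathbf{t}),q)\Phi_\alpha.
\een

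First, I would differentiate the defining relation $J(v(\mathbf{t}),0)=1+w(\mathbf{t})$ with respect to $t_{n,\alpha}$. Since $J(\nu,q)=(1-q)S(\nu,q)^{-1}\cdot 1$ and hence $J(\nu,0)=S(\nu,0)^{-1}\cdot 1$, the chain rule yields
\ben
\sum_\gamma\frac{\partial J(\nu,0)}{\partial\nu_{1,\gamma}}\Big|_{\nu_1=v(\mathbf{t})}\partial_{t_{n,\alpha}}v_\gamma(\mathbf{t})=\partial_{t_{n,\alpha}}w(\mathbf{t}).
\een
The Jacobian on the left is invertible (its leading term is the identity) and can be controlled using the quantum differential equations satisfied by $S$ (Proposition \ref{prop:S-matrix}), so that $\partial_{t_{n,\alpha}}v$ is expressed in terms of $\partial_{t_{n,\alpha}}w$ and quantum-product data.

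Second, I would compute $\partial_{t_{n,\alpha}}w_\beta$ directly from its generating-series definition. Because $\partial_{t_{n,\alpha}}\mathbf{t}(L)=\Phi_\alpha(L-1)^n$, this derivative equals a sum of three-point genus-$0$ correlators evaluated at $\nu_1=0$ with the usual $\mathbf{t}(L)$-insertions at the remaining marked points. The next step is a translation identity: using the string equation at the auxiliary marked points together with the defining equation $J(v,0)=1+w$, the sum over the number of $\mathbf{t}(L)$-insertions at $\nu_1=0$ re-sums into the shift $\nu_1\mapsto v(\mathbf{t})$, so that $\partial_{t_{n,\alpha}}w_\beta$ becomes a three-point correlator at the shifted point $\nu_1=v(\mathbf{t})$.

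Third, I would apply the WDVV equation to this three-point correlator to factor it as $G(\partial v(\mathbf{t})\bullet X,\Phi_\beta)$, where $X$ is a linear combination of descendants $\Phi_\alpha L^k$. The definition \eqref{S-matrix} of the $S$-matrix identifies the relevant descendant generating series with $S(v(\mathbf{t}),q)\Phi_\alpha$ expanded near $q=\infty$; the residue $\op{Res}_{q=\infty}dq\,(q-1)^{n-1}$ then extracts exactly the power $(L-1)^n$. Matching this to the output of the first step gives the desired equation.

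The main obstacle is the translation identity in the second step: verifying rigorously that the sum over $\mathbf{t}(L)$-insertions at $\nu_1=0$ can be repackaged as the shift $\nu_1\mapsto v(\mathbf{t})$ in the correlators, with the permutation-equivariant parameters $\nu_2,\nu_3,\ldots$ held fixed. This is the K-theoretic and permutation-equivariant analog of Dubrovin's change of variables between flat-coordinate potentials and canonical coordinates. Once this identification is secured, the remaining manipulations are formal consequences of the string equation, WDVV, and the properties of $S$ collected in Proposition \ref{prop:S-matrix}.
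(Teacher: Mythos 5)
Your overall strategy is the same as the paper's: identify $v(\mathbf{t})$ with a distinguished "shifted point" at which $\partial_{t_{n,\alpha}}w$ collapses to a low-point correlator, then convert that correlator into the residue of $S(v,q)\Phi_\alpha$ using the quantum differential equations of Proposition \ref{prop:S-matrix}. The first and third steps of your outline are carried out in essentially the same way in the paper (differentiate $J(\tau,0)=1+w$, use $J(\nu,q)=(1-q)S(\nu,q)^{-1}1$ and part (c) of Proposition \ref{prop:S-matrix} to handle the Jacobian, and extract the residue at $q=\infty$ after dropping the residue at $q=0$).

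The genuine gap is exactly the step you flag as the "main obstacle": the resummation of the $\mathbf{t}(L)$-insertions at $\nu_1=0$ into the shift $\nu_1\mapsto v(\mathbf{t})$. This does \emph{not} follow from the string equation together with the defining relation $J(v,0)=1+w$; the string equation only removes insertions of $1$, whereas here one must absorb arbitrary descendant insertions $\mathbf{t}(L)$. The paper's mechanism (Proposition \ref{prop:reconstr}) is different: one first defines $\tau$ as the solution of the fixed-point problem $[S\mathbf{t}]_+(\tau,\nu_2,\dots,1)=\tau$, rewrites the descendant correlators as ancestor correlators via \eqref{desc-anc}, observes that at $\nu_1=\tau$ each ancestor insertion $[S\mathbf{t}]_+(\nu,\overline{L}_i)-\nu_1$ is divisible by $\overline{L}_i-1=\op{ft}^*(L_i-1)$, and then uses the geometric vanishing $(L_1-1)\cdots(L_n-1)=0$ on the $(n-3)$-dimensional space $\overline{\M}_{0,n}$ to kill every term with $n\geq 3$ insertions; the unstable terms $n\leq 2$ require a separate computation (Lemma \ref{le:unstable_qf}, which uses the dilaton equation and the exceptional term $(\psi^2(\nu_2),1)$ specific to the permutation-equivariant setting). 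Finally one must check that the $v$ defined by $J(v,0)=1+w$ coincides with this fixed point $\tau$, which is done by applying the reconstruction formula to $\partial_{t_{0,\alpha}}\partial_{t_{0,1}}\F^{(0)}$ and the string equation. Without the descendant--ancestor correspondence and the fixed-point characterization of the shift, your second step does not go through, so you should supply this argument (or an equivalent one) to complete the proof.
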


\subsection{Permutation-equivariant invariants of the point in genus 0}
Suppose now that $X=\op{pt}$. The Fock space in this case
$\F(\op{pt})=\bigoplus_{k=0}^\infty R(S_k)\otimes \CC=\CC[\nu_1,\nu_2,\dots]$
is the representation ring of the symmetric group where $\nu_n=p_n$
is the virtual representation of $S_n$ introduced earlier (see Section
\ref{sec:K-Fock}). 
\begin{theorem}\label{thm:corr-0}
The following formula holds:
\ben
&&
\Big\langle
\frac{1}{1-q_1L},\dots,
\frac{1}{1-q_nL},1,1
\Big\rangle_{0,n+2}(\nu)=\\
&&
\frac{1}{(1-q_1)\cdots(1-q_n)}
\Big( 1+ \frac{1}{q_1^{-1}-1} +\cdots + \frac{1}{q_n^{-1}-1}
\Big)^{n-1}\
\exp\left(
  \sum_{r=1}^\infty \frac{\nu_r}{r}
  \Big(
  1+\frac{1}{q_1^{-r}-1} +\cdots + \frac{1}{q_n^{-r}-1} \Big)
  \right)
\een
for all $n\geq 1.$
\end{theorem}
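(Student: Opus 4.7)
The plan is to apply Theorem \ref{thm:top_sol} with $X=\op{pt}$ and read off the correlators of Theorem \ref{thm:corr-0} from the topological solution $v(\mathbf{t})$ of the principal hierarchy \eqref{KGW:ph}. Here $K(X)=\CC$, so the metric $G$, the K-theoretic quantum product, the $J$-function and the $S$-matrix all reduce to scalar-valued formal series and the hierarchy becomes a single evolutionary PDE. If one makes the formal substitution $\mathbf{t}(q)=\sum_{i=1}^{n} s_{i}/(1-q_{i}q)$, expanded in the basis $(q-1)^{k}$, then symmetry of the correlators in their insertions implies that the coefficient of $s_{1}\cdots s_{n}$ in $w(\mathbf{t},\nu_{2},\nu_{3},\dots)$ equals the left-hand side of Theorem \ref{thm:corr-0} evaluated at $\nu_{1}=0$. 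The dependence on $\nu_{1}$ is recovered from the exponential structure of the answer, or equivalently from the identification $\nu_{1}=v(\mathbf{t})$ inside $J(v(\mathbf{t}),0)=1+w(\mathbf{t})$ together with the string equation.

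The first technical step is to obtain a closed formula for the $J$-function of the point, i.e.\ for $\langle(1-qL)^{-1}\rangle_{0,1}(\nu)$. This correlator is a sum, over partitions $\lambda\vdash k$, of $S_{k}$-equivariant K-theoretic integrals on $\overline{\M}_{0,1+k}$ paired with tensor products of the virtual representations $p_{\lambda_{1}}\otimes p_{\lambda_{2}}\otimes\cdots$. Since the character of each $p_{r}$ is concentrated on a single $r$-cycle, each such integral localizes via Kawasaki--Riemann--Roch on the quotient by the corresponding cyclic action, and an explicit evaluation packages the $\nu_{r}$-dependence into an exponential factor. Inverting via $J(\nu,q)=(1-q)S(\nu,q)^{-1}\cdot 1$ then produces $S(\nu,q)$ in closed form.

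With the $S$-matrix in hand, the hierarchy \eqref{KGW:ph} becomes an explicit scalar residue formula, and combined with $J(v,0)=1+w$ it yields an inductive recursion in the number of $\mathbf{t}$-arguments that determines every correlator. The approach I would take is to postulate the closed formula of Theorem \ref{thm:corr-0} as an ansatz and verify both that it satisfies the recursion --- the rational prefactor $(1-q_{1})^{-1}\cdots(1-q_{n})^{-1}\bigl(1+\sum_{i}(q_{i}^{-1}-1)^{-1}\bigr)^{n-1}$ is precisely what the residue in \eqref{KGW:ph} produces iteratively, while the exponential factor tracks the contribution of $S(\nu,q)$ --- and that it matches the initial condition at small $n$ (for instance $n=0$, where $\langle 1,1\rangle_{0,2}(\nu)$ is a direct K-theoretic computation). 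The main obstacle is the first step: the explicit evaluation of the $S_{k}$-equivariant K-theoretic integrals that define the $J$-function. Once this is done the rest is a routine if somewhat delicate manipulation with residues and partial fractions, and in the non-equivariant limit $\nu_{r}=0$ for $r\geq 1$ the formula collapses to Y.P. Lee's classical formula \cite{Lee1997}, which serves as a sanity check.
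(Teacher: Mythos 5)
Your overall architecture coincides with the paper's: compute the $J$-function of the point in closed form, read off $S(\nu,q)$ from $J=(1-q)S^{-1}1$, invoke Theorem \ref{thm:top_sol} to identify $v(\mathbf t)$ and $w(\mathbf t)$, and extract the multi-point correlators by residue/partial-fraction manipulations (your ``coefficient of $s_1\cdots s_n$'' extraction is the polarization of the paper's operators $P(q_i)=\frac{1}{1-q_i}\sum_k(q_i^{-1}-1)^{-k}\partial_{t_k}$ applied to $w$). On the first step, the route you sketch --- Kawasaki--Riemann--Roch localization over cyclic quotients --- is essentially Givental's original derivation of $J(\nu,q)=(1-q)\exp\bigl(\sum_k \nu_k/(k(1-q^k))\bigr)$; the paper instead argues geometrically, using the $S_n$-equivariant map $\pi:\overline{\M}_{0,n+1}\to\PP(V_n^*)$ with $\pi_*L_{n+1}^k=\op{Sym}^k(V_n)$ (via $R^{>0}\pi_*\O=0$) and the Heisenberg relations to get $r\partial_{\nu_r}f=(1-q^r)^{-1}f$. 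Since the formula is Givental's theorem and can simply be cited, the ``main obstacle'' you identify is not really one.

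The genuine gap is in your ansatz-and-verify plan for the last step: the induction does not close on the formula of Theorem \ref{thm:corr-0} as stated. That formula is the value of $P(q_1)\cdots P(q_n)w(\mathbf t)$ at $t_0=\nu_1$, $t_1=t_2=\cdots=0$, whereas applying one more $P(q_{n+1})$ requires knowing the full $\mathbf t$-dependence, so the stated formula cannot serve as the induction hypothesis. One must strengthen the hypothesis to the family $w_a(\mathbf t)=\exp\bigl(a_1v(\mathbf t)+\sum_{k\geq 2}a_k\nu_k/k\bigr)$ and prove $P(q)w_a=\frac{1}{1-q}\,\frac{a_1}{a_1+(q^{-1}-1)^{-1}}\,\partial\,w_{a'}$ with $a_1'=a_1+(q^{-1}-1)^{-1}$, $a_k'=a_k+(q^{-k}-1)^{-1}$; this uses the hierarchy \eqref{KGW:ph} together with a contour-deformation argument summing the geometric series inside the residue, and the accumulating factors $\partial^n$ and the telescoping product $\prod a_1^{(j)}/a_1^{(j+1)}$ are exactly what produce the prefactor $\bigl(1+\sum_i(q_i^{-1}-1)^{-1}\bigr)^{n-1}$ after restricting to $t_{k\geq1}=0$, where $v=t_0$. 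This is the actual content of the proof rather than ``routine manipulation''; your plan is repairable, but only after introducing the larger family $w_a$ on which the recursion genuinely closes.
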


\subsection{Acknowledgements}
I am thankful to Yukinobu Toda for a very useful discussion on sheaf
cohomology and rational singularities. The idea to consider the
K-theoretic Fock space of Wang came after a talk by Timothy Logvinenko
on the MS seminar at Kavli IPMU. I am thankful to him for giving an
inspiring talk.  
This work is supported by the World Premier International Research
Center Initiative (WPI Initiative), MEXT, Japan and by JSPS Kakenhi
Grant Number JP22K03265.

\section{K-theoretic Fock space}

Let us recall the background on K-theoretic Heisenberg algebras that
will be needed in this paper. We give self-contained
proofs mostly because we would like to give the reader the chance to
grasp the techniques involved in the construction of the Fock space
$\F(X)$. We only focus on the results relevant for this paper. For
more properties and further details we refer to \cite{Wang2000}.

\subsection{Double cosets of the Young subgroups}

Suppose that $\lambda=(\lambda_1,\dots,\lambda_r)$ is a partition of $n$, that is, a decreasing sequence of integers $\lambda_1\geq \lambda_2\geq \cdots \geq \lambda_r>0$, such that, $\lambda_1+\cdots +\lambda_r=n$. Usually we write $\lambda\vdash n$ to denote that $\lambda$ is a partition of $n$. The number $\ell(\lambda):=r$ is called the {\em length} of $\lambda$. Put $\ell_k(\lambda):=\op{card}\, \{i\ |\ \lambda_i=k\}$ where $k=1,2,\dots$. Given a partition $\lambda$ we define the permutation
\ben
\sigma(\lambda):=(1,2,\dots,\lambda_1)\,
(\lambda_1+1,\lambda_1+2,\dots,\lambda_1+\lambda_2)\, \cdots\,
(\lambda_1+\cdots +\lambda_{r-1}+1,\dots, \lambda_1+\cdots +\lambda_{r-1}+\lambda_r).
\een
Note that $\{\sigma(\lambda)\}_{\lambda\vdash n}$ is a complete set of
representatives for the conjugacy classes of $S_n$ and that the
conjugacy class $C_\lambda:=\{g\sigma(\lambda)g^{-1}\ |\ g\in S_n\}$ consists of $n! z_\lambda^{-1}$ elements where $z_\lambda :=\prod_k k^{\ell_k(\lambda)} \ell_k(\lambda)!$ is the number of elements in $S_n$ that commute with $\sigma(\lambda)$. Finally, let us denote by $Y_i(\lambda)$ ($1\leq i\leq \ell(\lambda)$) the sequences $Y_1(\lambda):=\{1,2,\dots,\lambda_1\}$, $Y_2(\lambda):=
\{\lambda_1+1,\lambda_1+2,\dots,\lambda_1+\lambda_2\}$, etc. The
subgroup of $S_n$ consisting of permutations that leave the sets
$Y_i(\lambda)$ invariant for all $i$ will be denoted by
$S_\lambda$, that is,
\ben
S_\lambda:=\{\sigma\in S_n\ |\
\sigma(Y_i(\lambda))=Y_i(\lambda)\}\cong
S_{\lambda_1}\times \cdots \times S_{\lambda_r}
.
\een
The subgroups $S_\lambda$ ($\lambda\vdash n$) are
known as the {\em Young subgroups} of $S_n$.

Suppose now that $\lambda$ and $\mu$ are partitions of $n$. We would
like to recall the classification of the 
double coset classes $S_\mu\backslash S_n /S_\lambda$ (see
\cite{Zele1981}, Appendix 3, Section A3.2). For a given 
parmutation $\sigma\in S_n$, let  us define 
\ben
\gamma_{ij}:= \op{card} (\sigma (Y_i(\lambda)) \cap Y_j(\mu)),
\quad 1\leq i\leq \ell(\lambda)=:r,
\quad 1\leq j\leq \ell(\mu)=:s.
\een
Note that $\gamma_{ij}$ are non-negative integers satisfying
\beq\label{rc-constraints}
\sum_{j=1}^s \gamma_{ij} = \lambda_i,\quad
\sum_{i=1}^r \gamma_{ij}=\mu_j.
\eeq
\begin{lemma}
The  numbers $\gamma_{ij}$ defined above depend only on the double
coset of $\sigma$ in  $S_\mu\backslash S_n /S_\lambda$.
\end{lemma}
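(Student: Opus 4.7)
The plan is to verify that $\gamma_{ij}$ is invariant under the left action of $S_\mu$ and the right action of $S_\lambda$ on $\sigma$ separately. Writing an arbitrary element of the double coset as $\tau \sigma \rho$ with $\tau\in S_\mu$ and $\rho\in S_\lambda$, it suffices to show that replacing $\sigma$ by $\tau\sigma$ or by $\sigma\rho$ does not change the numbers
\[
\gamma_{ij}(\sigma) = \op{card}\bigl(\sigma(Y_i(\lambda))\cap Y_j(\mu)\bigr).
\]

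First I would handle right multiplication by $\rho\in S_\lambda$. By the very definition of the Young subgroup $S_\lambda$, the element $\rho$ permutes the set $Y_i(\lambda)$ into itself for every $i$, so $\rho(Y_i(\lambda))=Y_i(\lambda)$ and consequently $(\sigma\rho)(Y_i(\lambda))=\sigma(Y_i(\lambda))$. The intersection with $Y_j(\mu)$ is therefore literally the same set, so $\gamma_{ij}(\sigma\rho)=\gamma_{ij}(\sigma)$.

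Next I would handle left multiplication by $\tau\in S_\mu$. Here the key observation is that $\tau$ stabilizes each $Y_j(\mu)$ setwise, in particular $\tau^{-1}(Y_j(\mu))=Y_j(\mu)$. Applying $\tau^{-1}$ to the intersection that defines $\gamma_{ij}(\tau\sigma)$ and using the fact that $\tau^{-1}$ is a bijection (so it preserves cardinalities and commutes with intersections), I get
\[
\gamma_{ij}(\tau\sigma)
=\op{card}\bigl(\tau\sigma(Y_i(\lambda))\cap Y_j(\mu)\bigr)
=\op{card}\bigl(\sigma(Y_i(\lambda))\cap \tau^{-1}(Y_j(\mu))\bigr)
=\gamma_{ij}(\sigma).
\]
Combining the two invariances shows that $\gamma_{ij}$ is constant on the double coset $S_\mu\sigma S_\lambda$.

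There is no real obstacle here: the whole argument is a bookkeeping exercise in the defining property of Young subgroups, namely that $S_\lambda$ preserves the sets $Y_i(\lambda)$ and $S_\mu$ preserves the sets $Y_j(\mu)$. The only small point worth emphasizing in the write-up is that cardinalities are preserved because $\tau$ (and $\tau^{-1}$) act as bijections on $\{1,\dots,n\}$; once this is noted the claim is immediate.
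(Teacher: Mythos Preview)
Your proof is correct and is essentially the argument the paper has in mind: the paper simply states that the lemma ``follows immediately from the definition'' (with a reference to Zelevinsky), and your two-step check---invariance under right multiplication by $S_\lambda$ via $\rho(Y_i(\lambda))=Y_i(\lambda)$, and under left multiplication by $S_\mu$ via $\tau^{-1}(Y_j(\mu))=Y_j(\mu)$ together with bijectivity---is exactly how one unpacks that remark.
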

The proof follows immediately from the definition (see
\cite{Zele1981}, Appendix 3, Section A3.2). 
Suppose that we fix a matrix $\gamma=(\gamma_{ij})$ of size
$r\times s$ satisfyning the above conditions
\eqref{rc-constraints}. Let us construct a permutation $\sigma\in S_n$
such that the corresponding matrix is $\gamma$. Split each set
$Y_i(\lambda)$ into subsets $Y_{i1}(\lambda),\dots, Y_{is}(\lambda)$,
such that, $Y_{i1}(\lambda)$ consists of the first $\gamma_{i1}$
elements of $Y_i(\lambda)$, $Y_{i2}(\lambda)$ -- of the next
$\gamma_{i2}$ elements, etc. We can do this because the number of
elements in $Y_i(\lambda)$ is
$\lambda_i=\gamma_{i1}+\gamma_{i2}+\cdots + \gamma_{is}. $ Similarly,
let us split each set $Y_j(\mu)$ into subsets $Y_{1j}(\mu),\dots,
Y_{rj}(\mu)$ consisting of respectively $\gamma_{1j},\dots
\gamma_{rj}$ elements. We define $\sigma$ to be the permutation that
maps the elements of $Y_{ij}(\lambda)$ to $Y_{ij}(\mu)$ and for the
sake of definitness, let us require that $\sigma$ preserves the order
of the elements. Note that $\sigma(Y_i(\lambda))\cap
Y_j(\mu)=Y_{ij}(\mu)$ consists of $\gamma_{ij}$ elements.
\begin{lemma}
Suppose that $\gamma=(\gamma_{ij})$ is a matrix of non-negative
integers satisfying \eqref{rc-constraints}. Let $\sigma\in S_n$ be the
permutation constructed above. Then the double coset $S_\mu \sigma
S_\lambda$ consists of $\lambda! \mu!/\gamma!$ elements, where
$\lambda!=\lambda_1!\cdots \lambda_r!$,
$\mu!=\mu_1!\cdots \mu_s!$, and $\gamma!=\prod_{i,j} \gamma_{ij}!.$ 
\end{lemma}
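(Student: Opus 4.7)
The plan is to use the standard orbit-stabilizer style identity for double cosets, namely
\[
|S_\mu \sigma S_\lambda| \;=\; \frac{|S_\mu|\,|S_\lambda|}{|S_\lambda \cap \sigma^{-1} S_\mu \sigma|},
\]
which follows by considering the action of $S_\mu \times S_\lambda$ on $S_n$ by $(a,b)\cdot g = a g b^{-1}$ and applying orbit-stabilizer to the orbit through $\sigma$. Since $|S_\lambda|=\lambda!$ and $|S_\mu|=\mu!$, the lemma reduces to showing that the stabilizer subgroup $H := S_\lambda \cap \sigma^{-1} S_\mu \sigma$ has order exactly $\gamma! = \prod_{i,j}\gamma_{ij}!$.

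To identify $H$, I would unpack the definition using the refined partition $Y_i(\lambda) = \bigsqcup_{j=1}^{s} Y_{ij}(\lambda)$ introduced in the construction of $\sigma$, together with the analogous refinement $Y_j(\mu) = \bigsqcup_{i=1}^{r} Y_{ij}(\mu)$, recalling that $\sigma$ sends $Y_{ij}(\lambda)$ bijectively onto $Y_{ij}(\mu)$. An element $\tau \in S_\lambda$ preserves each $Y_i(\lambda)$; it lies in $\sigma^{-1} S_\mu \sigma$ exactly when $\sigma \tau \sigma^{-1}$ preserves each $Y_j(\mu)$, which, after conjugating back by $\sigma$, is equivalent to $\tau$ preserving each $\sigma^{-1}(Y_j(\mu)) = \bigsqcup_i Y_{ij}(\lambda)$. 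Combining the two invariance conditions gives $\tau(Y_{ij}(\lambda)) = Y_{ij}(\lambda)$ for every $i,j$, so $\tau$ must lie in the Young-type subgroup $\prod_{i,j} S_{Y_{ij}(\lambda)}$.

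Conversely, any permutation that permutes each block $Y_{ij}(\lambda)$ within itself clearly belongs to both $S_\lambda$ and $\sigma^{-1}S_\mu\sigma$, so the containment is an equality. Therefore $H \cong \prod_{i,j} S_{\gamma_{ij}}$ and $|H| = \prod_{i,j} \gamma_{ij}! = \gamma!$, which plugged into the orbit-stabilizer formula yields $|S_\mu \sigma S_\lambda| = \lambda!\mu!/\gamma!$ as claimed.

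The only place where care is genuinely needed is the bookkeeping of the two block refinements: one has to be careful that the specific $\sigma$ constructed (the order-preserving bijection $Y_{ij}(\lambda)\to Y_{ij}(\mu)$) matches the indexing conventions so that $\sigma^{-1}(Y_j(\mu))$ really equals $\bigsqcup_i Y_{ij}(\lambda)$. Once that identification is set up cleanly, the rest is formal, and since the left-hand side $|S_\mu \sigma S_\lambda|$ is known a priori to depend only on the double coset (hence only on $\gamma$ by the preceding lemma), it is harmless that the construction of $\sigma$ involved an arbitrary choice.
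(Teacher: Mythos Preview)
Your proposal is correct and follows essentially the same approach as the paper: both use the orbit--stabilizer theorem for the action of $S_\mu\times S_\lambda$ on the double coset via $(a,b)\cdot g = agb^{-1}$, then identify the stabilizer of $\sigma$ with the Young-type subgroup $\prod_{i,j} S_{\gamma_{ij}}$ by intersecting the two block conditions. The only cosmetic difference is that you immediately project the stabilizer to the second factor and write it as $S_\lambda\cap\sigma^{-1}S_\mu\sigma$, whereas the paper carries the pair $(\tau_1,\tau_2)$ and observes $\tau_1=\sigma\tau_2\sigma^{-1}$; the underlying computation of which $\tau_2$ are allowed is identical.
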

\begin{proof}
Let us consider the map
\ben
S_\mu \times S_\lambda \to S_\mu \sigma S_\lambda,
\quad
(\tau_1,\tau_2)\mapsto \tau_1 \sigma \tau_2^{-1}.  
\een
The above formula defines a transitive action of $S_\mu\times S_\lambda$ on the double coset $S_\mu \sigma S_\lambda$. The stabilizer of $\sigma\in  S_\mu \sigma S_\lambda$ consists of pairs $(\tau_1,\tau_2)$, such that, 
$\tau_1=\sigma \tau_2 \sigma^{-1}$. Note that $\tau_1$ is uniquely
determined by $\tau_2$. We claim that $\tau_2$ leaves the subsets
$Y_{ij}(\lambda)$ invariant for all $i$ and $j$. Indeed, note that
\ben
\tau_2(Y_{ij}(\lambda)) \subset \tau_2(Y_i(\lambda))= Y_i(\lambda).
\een
On the other hand,
\ben
\sigma^{-1} \tau_1 \sigma(Y_{ij}(\lambda))=
\sigma^{-1} \tau_1 (Y_{ij}(\mu)) \subset \sigma^{-1} (Y_j(\mu)).
\een
It remains only to notice that $Y_i(\lambda)\cap \sigma^{-1}
(Y_j(\mu))=Y_{ij}(\lambda)$. Conversely, if $\tau_2$ preserves
$Y_{ij}(\lambda)$ for all $i$ and $j$, then $\tau_1=\sigma
\tau_2\sigma^{-1}$ leaves all $Y_{ij}(\mu)$ invariant. In particular,
$\tau_1\in S_\mu$. This proves that the stabilizer of $\sigma$ is
isomorphic to the subgroup of permutations in $S_n$ that leave the
subsets $Y_{ij}(\lambda)$ invariant for all $i$ and $j$. This subgroup
is isomorphic to the direct product of all $S_{\gamma_{ij}}$ and hence
it has $\gamma !$ elements.
\end{proof}

\begin{lemma}
  We have $$\sum_{\gamma} \frac{\lambda!\mu!}{\gamma!} = n!,$$ 
  where the sums is over all matrices $\gamma$ of size $r\times s$ with non-negative
  entries satisfying condition \eqref{rc-constraints}.
\end{lemma}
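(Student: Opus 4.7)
The plan is to deduce this from the two preceding lemmas by recognizing the identity as a cardinality count of $S_n$ via its decomposition into double cosets. Concretely, since $S_n$ is the disjoint union of the double cosets $S_\mu \sigma S_\lambda$, I would write
\ben
n! \ =\ |S_n| \ =\ \sum_{D\in S_\mu\backslash S_n/S_\lambda} |D|,
\een
and then match each term on the right to a term on the left via the assignment $D\mapsto \gamma(D)$ sending a double coset to its matrix of intersection numbers. The first lemma guarantees that this assignment is well defined, the second lemma computes $|D|=\lambda!\mu!/\gamma(D)!$, and a construction in the second lemma produces, for every valid matrix $\gamma$ satisfying \eqref{rc-constraints}, an explicit permutation whose matrix is $\gamma$; hence the assignment $D\mapsto \gamma(D)$ is surjective onto the set of valid matrices.

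The only point not explicitly recorded in the excerpt is the injectivity of $D\mapsto \gamma(D)$. I would argue as follows. Suppose $\sigma, \sigma'\in S_n$ have the same matrix $\gamma$. Then for each $i,j$ both $\sigma(Y_i(\lambda))\cap Y_j(\mu)$ and $\sigma'(Y_i(\lambda))\cap Y_j(\mu)$ have exactly $\gamma_{ij}$ elements, so the partitions $Y_i(\lambda)=\bigsqcup_j (\sigma^{-1}Y_j(\mu)\cap Y_i(\lambda))$ and $Y_i(\lambda)=\bigsqcup_j (\sigma'^{-1}Y_j(\mu)\cap Y_i(\lambda))$ have matching block sizes. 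Choose $\tau_2\in S_\lambda$ that, on each $Y_i(\lambda)$, permutes the blocks of the second partition into the blocks of the first. Then $\tau_1:=\sigma\tau_2\sigma'^{-1}$ sends each $Y_j(\mu)$ to itself, hence lies in $S_\mu$, and by construction $\sigma=\tau_1\sigma'\tau_2^{-1}\cdot (\text{something in } S_\mu\times S_\lambda)$; more precisely, $\sigma\in S_\mu\sigma'S_\lambda$. This establishes the bijection.

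Combining, the sum
\ben
\sum_\gamma \frac{\lambda!\mu!}{\gamma!} \ =\ \sum_{D}|D|\ =\ n!,
\een
as desired. The main (and only) obstacle is the injectivity verification above; everything else is essentially a bookkeeping reassembly of the two previous lemmas.
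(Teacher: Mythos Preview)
Your argument is correct, but it takes a genuinely different route from the paper's proof.

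The paper proves the identity by a self-contained multinomial computation: write
\[
(x_1+\cdots+x_s)^n=\prod_{i=1}^r (x_1+\cdots+x_s)^{\lambda_i},
\]
expand each factor by the multinomial theorem, collect the coefficient of $x_1^{\mu_1}\cdots x_s^{\mu_s}$, and compare with $n!/\mu!$. No reference to the previous two lemmas is needed.

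You instead count $|S_n|$ as the sum of double-coset sizes and identify the index set with the set of admissible matrices $\gamma$. Well-definedness of $D\mapsto\gamma(D)$ is the first lemma, the second lemma gives $|D|=\lambda!\mu!/\gamma(D)!$, surjectivity comes from the explicit construction preceding the second lemma, and you supply injectivity by the direct argument with $\tau_2\in S_\lambda$ and $\tau_1:=\sigma\tau_2\sigma'^{-1}\in S_\mu$. (Your computation that $\tau_1$ preserves each $Y_j(\mu)$ is fine; the phrase ``$\cdot(\text{something in }S_\mu\times S_\lambda)$'' is a stray remnant and should be deleted, since $\sigma=\tau_1\sigma'\tau_2^{-1}$ already holds on the nose.)

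One structural remark: in the paper's logic this lemma is precisely what is later used to \emph{deduce} the bijection of Proposition~\ref{prop:dc} via a cardinality comparison (well-defined plus surjective plus equal total count forces injective). Your approach reverses the dependency: you prove injectivity directly and obtain the identity as a corollary, so by the end of your proof Proposition~\ref{prop:dc} is essentially already established. Either order is valid; the paper's multinomial proof has the advantage of being independent of the double-coset machinery, while yours makes the role of the identity in the surrounding argument transparent.
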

\begin{proof}
Let us compute
\ben
(x_1+\cdots + x_s)^n=
(x_1+\cdots +x_s)^{\lambda_1}\cdots (x_1+\cdots + x_s)^{\lambda_r}
\een
in two different ways. First, using that
\ben
(x_1+\cdots +x_s)^{\lambda_i} = \sum_{\gamma_{i1}+\cdots +\gamma_{is}=
  \lambda_i}
\frac{\lambda_i !}{\gamma_{i1}!\cdots \gamma_{is}!} \,
x_1^{\gamma_{i1}}\cdots x_s^{\gamma_{is}}
\een
we get that the above expression coincides with
\ben
\sum_{\mu_1+\cdots +\mu_s=n} \sum_{\gamma} \frac{\lambda!}{\gamma!}
x_1^{\mu_1}\cdots x_s^{\mu_s}.
\een
On the other hand, the coefficient in front of
$x^\mu:=x_1^{\mu_1}\cdots x_s^{\mu_s}$ is $n!/\mu!$. Comparing the two
formulas, we get the identity that we wanted to prove.
\end{proof}

We proved the following proposition.
\begin{proposition}\label{prop:dc}
  Suppose that $\lambda$ and $\mu$ are partitions of $n$ of sizes
  respectively $r$ and $s$. 
The double cosets in $S_\mu\backslash S_n /S_\lambda$ are parametrized
by matrices $\gamma=(\gamma_{ij})$ of size $r\times s$ whose entries
are non-negative integers satisfying condition
\eqref{rc-constraints}. Moreover, let us associate with each $\gamma$
a permutation $\sigma$ as explained above, then  the resulting set of
permutations gives a complete set of representatives for the double
cosets in $S_\mu\backslash S_n /S_\lambda$.  
\end{proposition}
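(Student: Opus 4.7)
The plan is to combine the three preceding lemmas into a counting argument. The first lemma tells us that the assignment $\sigma \mapsto \gamma(\sigma) = (\gamma_{ij}(\sigma))$ descends to a well-defined map
\[
\Phi : S_\mu \backslash S_n / S_\lambda \longrightarrow \mathcal{M}_{r,s}(\lambda,\mu),
\]
where $\mathcal{M}_{r,s}(\lambda,\mu)$ denotes the set of $r\times s$ matrices with non-negative integer entries satisfying \eqref{rc-constraints}. The explicit construction preceding the second lemma produces, for every $\gamma \in \mathcal{M}_{r,s}(\lambda,\mu)$, a permutation $\sigma(\gamma)$ such that $\sigma(\gamma)(Y_i(\lambda))\cap Y_j(\mu) = Y_{ij}(\mu)$, which has exactly $\gamma_{ij}$ elements; thus $\Phi([\sigma(\gamma)]) = \gamma$. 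This immediately gives surjectivity of $\Phi$ and shows that distinct matrices yield distinct double cosets.

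For injectivity, that is, to see that $\Phi$ is in fact a bijection (and not merely a surjection), I would use the counting lemmas. By the second lemma, the double coset $S_\mu \sigma(\gamma) S_\lambda$ contains exactly $\lambda!\mu!/\gamma!$ elements. Summing over the image of the section $\gamma \mapsto [\sigma(\gamma)]$ and invoking the third lemma,
\[
\sum_{\gamma \in \mathcal{M}_{r,s}(\lambda,\mu)} \frac{\lambda!\mu!}{\gamma!} \;=\; n! \;=\; |S_n|.
\]
Since $S_n$ is the disjoint union of its double cosets and the image of the section already accounts for all $n!$ elements, there can be no further double cosets outside this image. Therefore $\Phi$ is a bijection and $\{\sigma(\gamma) : \gamma \in \mathcal{M}_{r,s}(\lambda,\mu)\}$ is a complete set of representatives.

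The main obstacle is essentially cosmetic rather than mathematical: one must be careful to verify that $\sigma(\gamma)$ really produces the matrix $\gamma$ under the assignment in the first lemma, so that the explicit construction is genuinely a section of $\Phi$. This is where the convention ``$\sigma$ preserves the order of the elements'' that was imposed in the construction is used, although any other convention would work equally well. Once this identification is in place, the proposition is a direct consequence of combining the three lemmas, with the key step being the recognition that the counting identity in the third lemma exactly matches $|S_n|$, forcing the section to be surjective on double cosets.
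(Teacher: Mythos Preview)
Your proof is correct and follows exactly the paper's approach: the paper presents the three lemmas and then simply declares ``We proved the following proposition,'' leaving the combination to the reader, and your write-up is precisely that combination (well-definedness from the first lemma, the explicit section, and the counting identity from the second and third lemmas forcing the section to hit every double coset). Your remark about the order-preserving convention being inessential is also accurate---the paper already verifies $\sigma(Y_i(\lambda))\cap Y_j(\mu)=Y_{ij}(\mu)$ directly from the construction.
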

Note that in Proposition \ref{prop:dc} the condition that $\lambda$ and $\mu$ are partitions can be relaxed, that is, we do not have to require that $\lambda=(\lambda_1,\dots,\lambda_r)$ and $\mu=(\mu_1,\dots,\mu_s)$ are decreasing sequences. 

\subsection{Induced vector bundles}\label{sec:ind}
We will assume that the reader is familiar with the notion of a
$G$-space and $G$-vector bundle. For some background, we refer to
\cite{Segal1968}. 
Suppose that $G$ is a finite group, $H$ a subgroup of $G$, and $X$ is
a $G$-space. For every $H$-bundle on $X$ we define a $G$-bundle
$\op{Ind}_H^G(E)$ on $X$ whose points are the set theoretic maps
$f:G\to E$ satisfying the following two conditions
\begin{enumerate}
\item[(i)]
  The composition $\pi\circ f: G\to X$ is $G$-equivariant, that is,
  $\pi(f(g))=g\pi(f(1))$ for all $g\in G$ where $\pi:E\to X$ is the
  projection map.
\item[(ii)]
  The map $f$ is $H$-equivariant, that is, $f(hg)=hf(g)$ for all $h\in
  H$ and $g\in G$. 
\end{enumerate}
Note that when $X$ is a pont we have: $E$ is a representation of $H$
and $\op{Ind}_H^G(E)$ is the induced representation. The structure of
a $G$-bundle on $\op{Ind}_H^G(E)$ is defined as follows. First, the
$G$-action on $\op{Ind}_H^G(E)$ is defined by $(gf)(g'):=f(g'g)$.  The
structure projection $p:\op{Ind}_H^G(E)\to X$ is defined by $f\mapsto
\pi(f(1))$.  It is strightforward to check that $p$ is a
$G$-equivariant map. Suppose that $f:G\to E$ is in the fiber
$p^{-1}(x)$. Then $\pi(f(g))= g \pi(f(1))=gp(f)=gx$, that is,  
$f(g)\in E_{gx}$ where $E_y$ denotes the fiber of $E$ at $y$. Using the
linear strucure on the fibers of $E$ we get that $p^{-1}(x)$ is
naturally a linear vector space: $(f_1+f_2)(g):=f_1(g)+f_2(g)$ and
$(cf)(g):=cf(g)$. Let $g_1,g_2,\dots,g_s$ be a complete set of
representatives of the right coset classes in $H\backslash G$. Since
$f(hg)=hf(g)$ we see that the map $f$ is uniquely determined by its
values  $f(g_i)\in E_{g_ix}$. This proves that $p^{-1}(x)$ is a finite
dimensional vector space of dimension $\op{rk}(E) |G:H|$ where $|G:H|$ is
the index of $H$ in $G$. Moreover, there is a set-theoretic
isomorphism 
\beq\label{vb-str}
\phi: \bigoplus_{j=1}^s g_j^*E \to \op{Ind}_H^G(E), 
\quad
\phi (v_1,\dots, v_s) (g)= gg_i^{-1} \cdot \widetilde{g}_i(v_i),
\eeq
where $i=i(g)$ is the unique index, such that, $g\in H g_i$
(note that $gg_i^{-1}\in H$ acts on $E$) and
$\widetilde{g}_i$ is the map defined by the following Cartesian
square:
\ben
\xymatrix{
  g_i^*E \ar[r]^-{\widetilde{g}_i}\ar[d]_{g_i^*\pi} & E\ar[d]^\pi \\
  X\ar[r]^{g_i} &  X
  }
\een
where slightly abusing the notation we denote by $g_i:X\to X$ the map
defined by $x\mapsto g_i x$. Using the map \eqref{vb-str}, we equip
$\op{Ind}_H^G(E)$ 
with the structure of a vector bundle over $X$. It is easy to check
that the $G$-action on $\op{Ind}_H^G(E)$ defined above is continuous
and therefore $\op{Ind}_H^G(E)$  is a topological $G$-vector bundle on
$X$.

We also have a restriction functor. Namely, if $X$ is a $G$-space,
$H\leq G$ is a subgroup, and $E$ is a $G$-vector bundle, then $E$ is
also an $H$-vector bundle which will be denoted by
$\op{Res}^G_H(E)$. The Frobenius reciprocity rules take the following
form.
\begin{proposition}[Frobenius reciprocity]\label{prop:Frob}
Suppose that $X$ is a $G$-space and that $H\leq G$ is a finite
subgroup. Let $E$ be a $G$-vector bundle and $F$ be an $H$-vector
bundle. 

a) We have an isomorphism
\ben
\op{Hom}_G(E,\op{Ind}_H^G(F))\cong \op{Hom}_H(\op{Res}^G_H(E),F)
\een
uniquely determined  by the following relation:
\ben
J(e)(g)=j(ge),\quad e\in E,\quad g\in G,
\een
where $J\in \op{Hom}_G(E,\op{Ind}_H^G(F))$ and $j\in
\op{Hom}_H(\op{Res}^G_H(E),F)$ are maps that correspond to each other
via the above isomorphism.

b) We have an isomorphism
\ben
\op{Hom}_G(\op{Ind}_H^G(F), E)\cong \op{Hom}_H(F,\op{Res}^G_H(E))
\een
uniquely determined  by the following relation:
\ben
J(f)=\sum_{\gamma\in G/H}
\gamma\, j\circ f(\gamma^{-1}),\quad
f\in \op{Ind}_H^G(F), 
\een
where $J\in \op{Hom}_G(\op{Ind}_H^G(F),E)$ and $j\in
\op{Hom}_H(F,\op{Res}^G_H(E))$ are maps that correspond to each other
via the above isomorphism.
\end{proposition}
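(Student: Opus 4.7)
The plan is to construct explicit inverses for both of the stated bijections, reducing each verification to the defining properties (i) and (ii) of the induced bundle together with the equivariance of the maps involved. Both arguments follow the classical Frobenius reciprocity for representations of finite groups and use no analytic input beyond the vector bundle structure described in \eqref{vb-str}.

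For part (a), I would first check that, given $j \in \op{Hom}_H(\op{Res}^G_H(E), F)$, the formula $J(e)(g) := j(ge)$ really does produce a section of $\op{Ind}_H^G(F)$: condition (i) holds because $j$ covers the identity on $X$, so $\pi(j(ge)) = \pi(ge) = g\pi(e)$, while condition (ii) reduces to $j(hge) = h\,j(ge)$, which is exactly the $H$-equivariance of $j$. The $G$-equivariance of $J$ is then direct from the definition of the $G$-action on $\op{Ind}_H^G(F)$. For the inverse, I would set $j(e) := J(e)(1)$; combining the $G$-equivariance $J(ge) = g\cdot J(e)$ with $(g\cdot J(e))(1) = J(e)(g)$ gives $J(e)(g) = J(ge)(1) = j(ge)$, so the two assignments are mutually inverse, and the $H$-equivariance of the newly produced $j$ is immediate.

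For part (b), I would define the inverse by $j(v) := J(\widetilde{v})$, where for $v \in F_x$ the section $\widetilde{v} \in \op{Ind}_H^G(F)_x$ is concentrated on $H \subset G$ with $\widetilde{v}(h) := hv$ for $h \in H$ and $\widetilde{v}(g) := 0$ for $g \notin H$. Two preparatory checks are needed. First, the prescribed formula $J(f) = \sum_{\gamma \in G/H} \gamma\, j(f(\gamma^{-1}))$ must be shown to be independent of the choice of coset representatives: replacing $\gamma$ by $\gamma h$ with $h \in H$, property (ii) gives $f((\gamma h)^{-1}) = h^{-1} f(\gamma^{-1})$, and the $H$-equivariance of $j$ cancels the resulting $h^{-1}$ against $\gamma h$. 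Second, the $G$-equivariance of $J$ follows from the reparametrization $\gamma \mapsto g'\gamma'$ together with well-definedness.

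The main obstacle, and the step I would take the most care with, is the fiberwise decomposition
\[
f \ = \ \sum_{i=1}^s \gamma_i \cdot \widetilde{v}_i, \qquad v_i := f(\gamma_i^{-1}),
\]
where $\gamma_1,\dots,\gamma_s$ run over representatives of $G/H$; this identity is what makes the two round trips go through. To verify it I would fix $g \in G$, use the partition $G = \bigsqcup_i H\gamma_i^{-1}$ to write $g = h\gamma_i^{-1}$ for a unique $i$, and compute $(\gamma_i \widetilde{v}_i)(g) = \widetilde{v}_i(g\gamma_i) = \widetilde{v}_i(h) = hv_i = h\,f(\gamma_i^{-1}) = f(h\gamma_i^{-1}) = f(g)$, while $(\gamma_j \widetilde{v}_j)(g) = 0$ for $j \neq i$ since $g\gamma_j = h\gamma_i^{-1}\gamma_j \notin H$. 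Granted the decomposition, linearity and $G$-equivariance of $J$ yield $J(f) = \sum_i \gamma_i J(\widetilde{v}_i) = \sum_i \gamma_i j(v_i)$, and the reverse round trip reduces to the observation that, with $\gamma_1 = 1$, one has $\widetilde{v}(\gamma_i^{-1}) = v\,\delta_{i,1}$.
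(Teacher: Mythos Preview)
Your proposal is correct and is precisely the standard Frobenius reciprocity argument that the paper has in mind: the paper does not spell out a proof but simply remarks that the case $X=\op{pt}$ is the classical statement (citing \cite{Bump2014}, Section 34) and that ``the proofs in general remain the same.'' Your write-up supplies exactly those details, so there is nothing to add or correct.
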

If $X$ is a point, Proposition \ref{prop:Frob} is well known in the
representation theory of finite groups (see \cite{Bump2014}, Section
34). The proofs in general remains 
the same. The following proposition is Lemma 7 in \cite{Wang2000}.  
\begin{proposition}\label{prop:IR-RI}
  Suppose that $X$ is a $G$-space, $H_1$ and $H_2$ are finite
  subgroups of $G$, and $W$ is a $H_1$-vector bundle. For each 
  $s\in  G$, let $K_s:=\{(h_1,h_2)\in H_1\times
  H_2\ |\ h_1s=sh_2 \}$. Using the projection on the $i$-th factor
  $H_1\times H_2\to H_i$, 
  we view $K_s$ as a subgroup of $H_i$ for $i=1,2$. We have the following
  relation in $K_{H_2}(X)$: 
  \ben
  \op{Res}_{H_2}^G \, \op{Ind}_{H_1}^G (W)=
  \sum_{s\in H_1\backslash G/H_2}
  \op{Ind}_{K_s}^{H_2} \, s^*\, \op{Res}^{H_1}_{K_s} (W),
  \een
  where the sum is over a set of elements $s\in G$ which represent the
  double coset classes in $H_1\backslash G/H_2$.
\end{proposition}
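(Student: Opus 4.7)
The plan is to construct an explicit $H_2$-equivariant isomorphism of topological vector bundles on $X$,
\ben
\Phi: \bigoplus_{s\in S}\op{Ind}^{H_2}_{K_s}\bigl(s^*\op{Res}^{H_1}_{K_s}(W)\bigr)\longrightarrow \op{Res}^{G}_{H_2}\op{Ind}^{G}_{H_1}(W),
\een
where $S\subset G$ is a set of representatives for the double cosets $H_1\backslash G/H_2$, so that $G=\bigsqcup_{s\in S} H_1 s H_2$. I would begin by making the $K_s$-bundle structure on $s^*\op{Res}^{H_1}_{K_s}(W)$ explicit: viewing $K_s$ inside $H_2$ via projection onto the second factor, the total space $\{(x,w)\in X\times W : sx=\pi(w)\}$ carries the action $(h_1,h_2)\cdot(x,w)=(h_2 x,\, h_1 w)$. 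This preserves the defining equation since $\pi(h_1 w)=h_1 sx=sh_2 x$, so one indeed obtains a $K_s$-bundle on $X$.

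Next, over a point $x\in X$, an element of the left hand side is a tuple $(F_s)_{s\in S}$, where each $F_s:H_2\to s^*W$ is $K_s$-equivariant and covers $h_2\mapsto h_2 x$; via the identification $(s^*W)_{h_2 x}\cong W_{sh_2 x}$, I regard $F_s(h_2)$ as an element of $W_{sh_2 x}$. To this tuple I associate the map $f:G\to W$ whose restriction to the double coset $H_1sH_2$ is $f(h_1 s h_2):=h_1\cdot F_s(h_2)$. This is well-defined: if $h_1 s h_2 = h_1' s h_2'$ then $((h_1')^{-1}h_1,\, h_2'h_2^{-1})\in K_s$, and the $K_s$-equivariance of $F_s$ together with the definition of the $K_s$-action on $s^*W$ yields $h_1 F_s(h_2) = h_1' F_s(h_2')$. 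I would then verify: (i) $f\in \op{Ind}^G_{H_1}(W)$, so that left $H_1$-equivariance and the projection property hold by construction; (ii) $\Phi$ is $H_2$-equivariant via $(h_2 f)(h_1 s h_2') = f(h_1 s h_2' h_2) = h_1 F_s(h_2' h_2) = h_1 (h_2\cdot F_s)(h_2')$; and (iii) $\Phi$ is a morphism of topological vector bundles, using the local trivializations implicit in \eqref{vb-str}.

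Finally, I would exhibit the inverse: given $f\in \op{Ind}^G_{H_1}(W)$, set $F_s(h_2):=f(sh_2)$, regarded as an element of $W_{sh_2 x}=(s^*W)_{h_2 x}$. Its $K_s$-equivariance follows from $f(sk_2h_2)=f(k_1 s h_2)=k_1 f(sh_2)$ for $(k_1,k_2)\in K_s$. Since $G=\bigsqcup H_1 s H_2$ and $f$ is determined by its values on any complete set of right coset representatives for $H_1\backslash G$, the two maps are mutually inverse; the rank count $\sum_s |H_2|/|K_s|=\sum_s |H_1sH_2|/|H_1|=|G|/|H_1|$ provides an independent sanity check. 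The main obstacle throughout is purely bookkeeping: keeping straight the three interlocking equivariance structures (left $H_1$-action on $W$, right $H_2$-action on $G$, and the twisted $K_s$-action on $s^*W$ coupling them) and applying the defining relation $h_1 s = s h_2$ of $K_s$ consistently. Once this is handled, the construction is a fiberwise version of the classical Mackey decomposition from finite group representation theory.
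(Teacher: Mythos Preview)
Your proposal is correct and is precisely the argument the paper has in mind: the paper gives no detailed proof, merely stating that ``in the case when $X$ is a point, the statement is well known (see Serre, Proposition 22)'' and that ``Serre's argument works in the general case too.'' What you have written is exactly Serre's Mackey decomposition carried out fibrewise for $G$-vector bundles, using the explicit description of $\op{Ind}_H^G$ as a space of equivariant maps $G\to W$ that the paper sets up in Section~\ref{sec:ind}. Your verification of well-definedness via the relation $h_1s=sh_2$, the $H_2$-equivariance, and the explicit inverse are all correct, and the rank count is a nice sanity check; there is nothing to add beyond perhaps a line confirming continuity via the trivialization \eqref{vb-str}, which you already flag.
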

In the case when $X$ is a point, the statement is well known (see
\cite{Serre2012}, Proposition 22). Serre's argument works in the
general case too.   

\subsection{Anihilation operators}\label{sec:ca_operators}
In order to define the
anihilation operations, let us first recall the 
notion of a trace of a vector bundle. Suppose that $E$ is a complex vector
bundle on some compact topological space $Y$ and that $g:E\to E$ is a
finite order 
automorphism acting trivially on the base $Y$. To be more precise, we
have $g(y)=y$ for all $y\in Y$ and the induced map $g_y: E_y\to E_y$ is a finite order
linear map. In particular, the maps $g_y$ are diagonalizable and their
eigenvalues are independent of $y$. The {\em trace} of $g$ on $E$ is a
virtual vector bundle on $Y$ defined by 
\ben
\op{Tr}_g(E)=\sum_{\lambda\in \CC} \lambda E_\lambda,\quad
E_\lambda:= \op{Ker}(\lambda-g: E\to E),
\een
where only finitely many terms in the above sum are non-zero
because $\lambda$ must be an eigenvalue of $g$ and $E_\lambda$ is a
vector bundle because the eigenvalues of $g_y$ and their
multiplicities are independent of $y$.  The following lemma is well
known (see \cite{Atiyah1966}, Section 2).
\begin{lemma}\label{le:Adams}
  Let $E$ be a complex vector bundle on $X$, then
 $\psi^n(E):=\op{Tr}_{(1,2,\dots,n)}(E^{\otimes n})$ is the Adam's
 operation.  
\end{lemma}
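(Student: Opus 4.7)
The strategy is to reduce to the case of line bundles via the splitting principle, and then compute the trace directly on each orbit of tuples of line bundles under the cyclic permutation. The Adams operation $\psi^n$ is characterized by the two properties $\psi^n(L) = L^{\otimes n}$ for a line bundle $L$, and additivity $\psi^n(E \oplus F) = \psi^n(E) + \psi^n(F)$ in $K(X)$; so the task is to verify that $E \mapsto \op{Tr}_{(1,2,\dots,n)}(E^{\otimes n})$ satisfies the same two properties (it then agrees with $\psi^n$ by the splitting principle).

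By the splitting principle, choose a map $f: Y \to X$ such that $f^*: K(X) \to K(Y)$ is injective and $f^*E = L_1 \oplus \cdots \oplus L_r$ decomposes into line bundles. Since the construction $E \mapsto \op{Tr}_{(1,2,\dots,n)}(E^{\otimes n})$ commutes with pullback, it suffices to work on $Y$ with a split bundle. The first step is the line-bundle case: for a line bundle $L$, the bundle $L^{\otimes n}$ has one-dimensional fibers, so any permutation action coming from reordering the tensor factors is the identity. Hence $\op{Tr}_{(1,2,\dots,n)}(L^{\otimes n}) = L^{\otimes n} = \psi^n(L)$.

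For the split case, expand
\ben
E^{\otimes n} = \bigoplus_{(i_1,\dots,i_n)\in \{1,\dots,r\}^n} L_{i_1}\otimes\cdots \otimes L_{i_n},
\een
and group the summands according to the orbits of the cyclic action of $\sigma=(1,2,\dots,n)$ on the index tuples. Tuples fixed by $\sigma$ are precisely the diagonal tuples $(i,i,\dots,i)$; each contributes a fixed summand isomorphic to $L_i^{\otimes n}$, on which $\sigma$ acts as the identity by the line-bundle computation above. A non-diagonal orbit has some size $k \mid n$ with $k>1$ and gives a direct sum of $k$ line bundles all abstractly isomorphic to the common tensor product $L_{i_1}\otimes\cdots \otimes L_{i_n}$ (tensor product being symmetric for line bundles). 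On this sub-bundle $\sigma$ acts as a cyclic permutation matrix of order $k$, whose eigenspaces are line bundles isomorphic to $L_{i_1}\otimes\cdots\otimes L_{i_n}$ indexed by the $k$-th roots of unity $1,\zeta,\dots,\zeta^{k-1}$. The trace, weighted by eigenvalues, is $\bigl(\sum_{j=0}^{k-1}\zeta^j\bigr)\,L_{i_1}\otimes\cdots\otimes L_{i_n}=0$ since $k>1$. Therefore only the diagonal orbits survive, giving
\ben
\op{Tr}_{(1,2,\dots,n)}(E^{\otimes n}) = \sum_{i=1}^r L_i^{\otimes n} = \sum_{i=1}^r \psi^n(L_i) = \psi^n(E),
\een
where the last equality uses additivity of $\psi^n$.

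The main subtle point is step~3: one has to be sure that the cyclic permutation, restricted to the sub-bundle spanned by a single non-diagonal orbit, really is conjugate (as a bundle endomorphism) to the standard cyclic permutation matrix on $\CC^k$ tensored with a single line bundle. This uses the canonical isomorphisms between the summands $L_{j_1}\otimes\cdots\otimes L_{j_n}$ as the tuple $(j_1,\dots,j_n)$ varies in a cyclic orbit, which come from the symmetry of tensor products of line bundles. Once this identification is made the diagonalization and vanishing of the trace are immediate, and all the other steps (splitting principle, line-bundle base case, additivity) are standard.
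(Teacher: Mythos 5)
Your argument is correct and follows essentially the same route as the paper: reduce to a split bundle via the splitting principle (using compatibility of the trace construction with pullback), and observe that in the decomposition of $E^{\otimes n}$ only the diagonal tuples are fixed by the cyclic permutation. The only difference is that you spell out why the non-diagonal orbits contribute zero (the sum of $k$-th roots of unity for $k>1$), a point the paper simply asserts; this is a welcome clarification but not a different proof.
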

\begin{proof}
The operation is compatible with pullback, that is,
$\psi^n(f^*E)=f^*\psi^n(E)$ for all continuous maps $f:X\to
Y$. Recalling the splitting principle definition of the Adam's
operations, we get that it is sufficient to 
prove that if $E=L_1+\cdots + L_r$ is a direct sum of
line bundles, then $\psi^n(E)=L_1^n+\cdots + L_r^n$. This is obvious
because in the tensor product
\ben
E^{\otimes n}= \oplus_{1\leq i_1, \cdots, i_n\leq r}
L_{i_1}\otimes \cdots \otimes L_{i_n}
\een
all terms for which the sequence $(i_1,\dots,i_{n-1},i_n)$ is not invariant
under the cyclic permutation do not contribute to the trace. On the
other hand, the sequence equals its cyclic permutation
$(i_2,\dots,i_n,i_1)$ iff $i_1=i_2=\dots=i_n$. 
\end{proof}

Using the Chern character map, we
get that the Adam's operation $\psi^m:K(X)\to K(X)$ is an
isomorphism where recall that we work with the K-ring with complex
coefficients. Therefore, for every $F\in K(X)$, there exists an
$E\in K(X)$, such that, $\psi^m(E)=F$.  
We will need also the formula for the trace of an induced vector
bundle. 
\begin{proposition}\label{prop:induced_trace}
Suppose that $G$ is a finite group, $H\leq G$ is a subgroup, and $Y$ is a
trivial $G$-space.  For every $H$-vector bundle on $Y$ we have the
following formula:
\ben
\op{Tr}_g (\op{Ind}_H^G(E)) = \sum_{i: g_i g g_i^{-1}\in H}
\op{Tr}_{g_i g g_i^{-1}} (E),
\een
where $g_1,\dots,g_k\in G$ is a complete set of representatives for the
right coset classes $H\backslash G$.  
\end{proposition}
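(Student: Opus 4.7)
The plan is to reduce the statement to a direct linear-algebra computation by using the explicit trivialization of $\op{Ind}_H^G(E)$ provided by \eqref{vb-str}. Because $Y$ is a trivial $G$-space, each map $g_i : Y \to Y$ is the identity, so $g_i^* E = E$ as vector bundles on $Y$, and the isomorphism $\phi$ of \eqref{vb-str} takes the concrete form
\[
\phi : \bigoplus_{i=1}^s E \xrightarrow{\ \sim\ } \op{Ind}_H^G(E), \qquad \phi(v_1,\dots,v_s)(g') = g' g_i^{-1} \cdot v_i \text{ when } g' \in H g_i.
\]
Equivalently, at a point $y \in Y$ the inverse is given by $f \mapsto \bigl(f(g_1),\dots,f(g_s)\bigr) \in E_y^{\oplus s}$, a well-defined identification thanks to the $H$-equivariance of $f$.

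Second, I would translate the action of $g$ through this identification. By definition, $(g\cdot f)(g') = f(g'g)$, so the component at $g_i$ becomes $(g\cdot f)(g_i) = f(g_i g)$. For each $i$ there is a unique decomposition $g_i g = h_i \, g_{\pi(i)}$ with $h_i \in H$ and $\pi(i) \in \{1,\dots,s\}$, and $\pi$ is a permutation of $\{1,\dots,s\}$ because right multiplication by $g$ permutes the cosets $H g_j$. Using $H$-equivariance, $f(h_i g_{\pi(i)}) = h_i\cdot f(g_{\pi(i)})$. Consequently, under $\phi^{-1}$ the operator $g$ acts on $\bigoplus_{i=1}^s E_y$ by
\[
g\cdot(v_1,\dots,v_s) = \bigl(h_1 v_{\pi(1)},\dots,h_s v_{\pi(s)}\bigr).
\]

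Third, I would extract the trace. With respect to the block decomposition indexed by $i$, only the diagonal blocks contribute to $\op{Tr}_g$. The $i$-th block is on the diagonal precisely when $\pi(i) = i$, i.e.\ $g_i g = h_i g_i$, which is equivalent to $g_i g g_i^{-1} = h_i \in H$. On such a diagonal block, $g$ acts as the linear automorphism $h_i = g_i g g_i^{-1}$ of the $i$-th copy of $E$, contributing $\op{Tr}_{g_i g g_i^{-1}}(E)$. Summing over the indices $i$ with $g_i g g_i^{-1} \in H$ yields the claimed formula.

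The only technical care required is in the second step: keeping the conventions of the induced representation consistent so that the permutation $\pi$ of coset representatives and the twist elements $h_i \in H$ arise exactly in the form claimed. Once this bookkeeping is in place, computing $\op{Tr}_g$ reduces to the block-diagonal part of a permutation matrix with entries in $H$, and the assertion follows immediately.
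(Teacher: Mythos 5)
Your proposal is correct and follows essentially the same route as the paper's proof: both identify $\op{Ind}_H^G(E)\cong\bigoplus_i E$ via $f\mapsto(f(g_1),\dots,f(g_k))$ (using triviality of the $G$-action on $Y$), realize $g$ as a permutation matrix with entries in $H$ determined by $g_ig=h_ig_{\sigma(i)}$, and observe that only the fixed indices, i.e.\ those with $g_igg_i^{-1}\in H$, contribute $\op{Tr}_{g_igg_i^{-1}}(E)$ to the trace. The paper merely organizes the off-diagonal part by orbits of the permutation and notes each orbit of length greater than one contributes zero, which is the same observation as your ``only diagonal blocks contribute'' step.
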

\begin{proof}
Since $G$ acts trivially on $Y$, we have $\op{Ind}_H^G(E)\cong
\oplus_{i=1}^k E$ where the isomorphism is given by $f\mapsto
(f(g_1),\dots,f(g_k))$. Let $\sigma\in S_k$ be the permutation defined
by  $Hg_ig=Hg_{\sigma(i)}$ and let $h_i\in H$ be defined by
$g_ig=h_ig_{\sigma(i)}$. If $I=\{i_1,\dots,i_r\}$ is an orbit of
$\sigma$, then the subspace $\oplus_{i\in I} E$ is
$g$-invariant. Therefore,
\ben
\op{Tr}_g(\op{Ind}_H^G(E))=\sum_{I} \op{Tr}_g\Big( \bigoplus_{i\in I} E\Big),
\een
where the sum is over all orbits $I$ of $\sigma$ in
$\{1,2,\dots,k\}$. Note that the action of $g$ on $\oplus_{i\in I} E$
is represented by a $I\times I$ matrix $H_I$ with entries in $\op{End}(E)$
of the following type
\ben
H_I:=
\begin{bmatrix}
  0 &  H_{i_1} & 0 & \cdots & 0 \\
  0 & 0 & H_{i_2} & \cdots & 0 \\
  \vdots &\vdots & \ddots & \ddots&\vdots \\
   0 & 0 & \cdots & 0 & H_{i_{r-1}}\\
  H_{i_r} & 0 & \cdots &   0  & 0
\end{bmatrix},
\een
where $\{i_1,\dots,i_r\}:= I$ and $H_i\in \op{End}(E)$ is the operator
representing the action of $h_i=g_i g g_{\sigma(i)}^{-1}$.  Note that
\ben
\op{Tr}_g\Big( \oplus_{i\in I} E\Big) = \op{Tr}_{h_I} (E),
\een
where $h_I:=\op{Tr}(H_I)\in \op{End}(E)$. Since $h_I=0$ for $r>1$, we
get that only the one-point orbits $I=\{i\}$ of $\sigma$ contribute to
the trace of $g$. It remains only to notice that the one point orbits
$I=\{i\}$, that is, the fixed points of $\sigma$, correspond precisely
to those $i$ for which $g_i g g_{i}^{-1}\in H$ and that $h_I= g_i g
g_{i}^{-1}$. 
\end{proof}

Suppose that $W\in K(X)$ is a virtual vector bundle on $X$. Let us
define the contraction operation 
\ben
\xymatrix{
\iota_{-m}(W): K_{S_m}(X^m)\ar[r] &  \CC,} 
\quad 
E\mapsto \op{tr}_{(1,2,\dots,m)}\pi_*(E\otimes W^{\boxtimes m}),
\een
where $\pi:X^m\to \op{pt}$ is the contraction map and $\pi_*$ is the K-theoretic pushforward, that is, 
\ben
\iota_{-m}(W)(E):=
\sum_{i=0}^\infty (-1)^i
\op{tr}_{(1,2,\dots,m)} H^i(X^m, E\otimes W^{\boxtimes m}). 
\een
In the next lemma we will use the Lefschetz trace formula. We refer to
Givental's work (see \cite{Giv2017}, Section 2) for a proof
of the Lefscetz trace formula based on the Kawasaki's Riemann--Roch
formula (see \cite{Ka1979}).  
\begin{lemma}\label{le:contraction}
  Suppose that $E$ is a $S_m$-bundle on $X^m$. Then
  \ben
  \iota_{-m}(W) (E)=\chi(F\otimes W),
  \een
  where $F\in K(X)$ is such that
  $\psi^m(F)=\op{Tr}_{(1,2,\dots,m)} (E|_{X})$ where $X\subset X^m$ is
  the diagonal of $X^m$, that is, the submnaifold
  $\{x\in X^m\ |\ x_1=\cdots =x_m\}$. 
\end{lemma}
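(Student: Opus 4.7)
The approach is to apply the K-theoretic Lefschetz trace formula for the cyclic element $g=(1,2,\dots,m)\in S_m$ acting on $X^m$. The fixed locus is the small diagonal $\Delta\cong X$, and the formula (as used in \cite{Giv2017}, Section 2, via Kawasaki--Riemann--Roch) gives
\ben
\op{tr}_g\,\pi_*(E\otimes W^{\boxtimes m}) \ =\ \chi\!\left(X,\ \frac{\op{Tr}_g(E|_\Delta)\otimes\op{Tr}_g(W^{\otimes m})}{\op{Tr}_g(\Lambda^\bullet N^\vee)}\right),
\een
where $N$ is the normal bundle of $\Delta$ in $X^m$, and I have used that $W^{\boxtimes m}|_\Delta=W^{\otimes m}$ with $g$ acting by cyclic permutation of the tensor factors. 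Lemma \ref{le:Adams} immediately identifies $\op{Tr}_g(W^{\otimes m})=\psi^m(W)$.

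Next I would compute $\op{Tr}_g(\Lambda^\bullet N^\vee)$. Identifying $N$ with the subbundle $\{(v_1,\dots,v_m)\in T_X^{\oplus m}:v_1+\cdots+v_m=0\}$, the $\zeta^k$-eigenspace ($\zeta:=e^{2\pi\ii/m}$) equals $\{(v,\zeta^k v,\zeta^{2k}v,\dots)\}\cong T_X$ for $k=1,\dots,m-1$, so $g$ acts on $N^\vee$ with eigenvalue $\zeta^{-k}$ on the $k$-th copy of $T_X^\vee$. Using the splitting principle $T_X^\vee=\sum_a L_a$ together with the cyclotomic identity $\prod_{k=1}^{m-1}(1-\zeta^{-k}L)=(1-L^m)/(1-L)$, I obtain
\ben
\op{Tr}_g(\Lambda^\bullet N^\vee)\ =\ \prod_{k=1}^{m-1}\Lambda_{-\zeta^{-k}}(T_X^\vee)\ =\ \prod_a\bigl(1+L_a+L_a^2+\cdots+L_a^{m-1}\bigr)\ =:\ \theta^m(T_X^\vee),
\een
the Bott cannibalistic class of $T_X^\vee$. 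Substituting the defining relation $\op{Tr}_g(E|_\Delta)=\psi^m(F)$ and using multiplicativity of $\psi^m$, the Lefschetz formula collapses to
\ben
\op{tr}_g\,\pi_*(E\otimes W^{\boxtimes m})\ =\ \chi\!\left(X,\ \psi^m(F\otimes W)/\theta^m(T_X^\vee)\right).
\een

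To conclude, I would invoke the Adams--Riemann--Roch identity $\chi(U)=\chi(\psi^m(U)/\theta^m(T_X^\vee))$ for the projection $X\to\op{pt}$, applied with $U=F\otimes W$. This identity is itself a direct consequence of the same Lefschetz formula applied to the box-product $V=U^{\boxtimes m}$: indeed $\op{tr}_g\pi_*(U^{\boxtimes m})=\op{tr}_g\bigl(\chi(U)^{\otimes m}\bigr)=\psi^m(\chi(U))=\chi(U)$ by Lemma \ref{le:Adams}, while the right-hand side of Lefschetz is $\chi(\psi^m(U)/\theta^m(T_X^\vee))$ by the same normal bundle computation. The main obstacle is the equivariant trace computation on the normal bundle: one must correctly decompose $N$ into the $\zeta^k$-eigenspaces of the cyclic action, run the splitting-principle calculation, and use the cyclotomic identity to collapse the resulting product to the Bott cannibalistic class; once this is in hand, everything else is a formal combination of the Lefschetz formula, Lemma \ref{le:Adams}, and multiplicativity of $\psi^m$.
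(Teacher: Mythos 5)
Your argument is correct, and its first half is exactly the paper's: both apply the Kawasaki/Lefschetz trace formula for $c=(1,2,\dots,m)$ with fixed locus the small diagonal, identify the numerator as $\psi^m(F\otimes W)$ via Lemma \ref{le:Adams} and multiplicativity of $\psi^m$, and compute the trace on $\Lambda^\bullet N^\vee$ from the eigenspace decomposition $N\cong T_X\otimes\CC^{m-1}$; your product $\prod_a(1+L_a+\cdots+L_a^{m-1})$ is literally the paper's denominator $\prod_{i,j}(1-L_i^{-1}\eta^j)$ after the cyclotomic identity. Where you genuinely diverge is the finish: the paper converts the resulting Euler characteristic to cohomology by Hirzebruch--Riemann--Roch, uses $\op{ch}\circ\psi^m=m^{\op{deg}}\circ\op{ch}$ and a rescaling of the integral to land on $\chi(F\otimes W)$, whereas you stay inside K-theory, recognize the denominator as the Bott cannibalistic class $\theta^m(T_X^\vee)$, and close with the Adams--Riemann--Roch identity $\chi(U)=\chi(\psi^m(U)/\theta^m(T_X^\vee))$ --- which you moreover derive internally by running the same Lefschetz formula on $U^{\boxtimes m}$ and computing $\op{str}_c$ of the K\"unneth tensor power to be $\chi(U)$. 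Your route buys a purely K-theoretic proof with no Chern characters and makes the conceptual role of $\theta^m$ visible, at the cost of two small points you should make explicit: the division by $\theta^m(T_X^\vee)$ is legitimate because its rank $m^{\dim X}$ is nonzero (so it is invertible in $K(X)\otimes\CC$, exactly as the paper implicitly uses for its denominator), and the bootstrap step needs the Koszul-sign bookkeeping showing that the supertrace of the cyclic permutation on $H^\bullet(X,U)^{\otimes m}$ equals $\chi(U)$ (only the diagonal basis tensors contribute, each with total sign $+1$ in the supertrace); both check out, so the proof is complete.
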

\proof
Put $c=(1,2,\dots,m)\in S_m$. According to the Lefschetz trace
formula, $\iota_{-m}(W)(E)$ coinicdes with the Euler characteristics
of the following vector bundle
\beq\label{c-trace}
\frac{\op{Tr}_c(E|_X\otimes W^{\otimes m}) }{
  \op{str}_c( \wedge^\bullet (N^\vee_{X^m|X}))},
\eeq
where $N^\vee_{X^m|X}$ is the conormal bundle to $X$ in $X^m$ and
  \ben
  \op{str}_c( \wedge^\bullet (N^\vee_{X^m|X}))=
  \sum_{i=0}^\infty (-1)^i \op{Tr}_c  \wedge^i (N^\vee_{X^m|X}).
  \een
The numerator in \eqref{c-trace} is precisely $\psi^m(F\otimes
W)$. The denominator can be expressed in terms of the K-theoretic
Chern roots of $TX$. Indeed, we have
\ben
\xymatrix @R=.5pc{
  0\ar[r] & TX \ar[r] & TX\otimes \CC^m \ar[r] & N_{X^m|X}\ar[r] & 0.
  \\
  & v \ar@{|->}[r] & v\otimes (1,1,\dots,1) & &
}
\een
We get $N_{X^m|X}=TX\otimes \CC^{m-1}$, where $\CC^{m-1}$ is
identified with the hyperplane $x_1+\cdots +x_m=0$ in $\CC^m$ and the
action of $c$ on $N_{X^m|X}$ corresponds to permuting cyclically the
coordinates $(x_1,\dots,x_m)\in \CC^{m-1}$. If $L_1,\dots, L_D$ are
the K-theoretic Chern roots of $TX$, then we get
\ben
N^\vee_{X^m|X} =\bigoplus_{i=1}^D \bigoplus_{j=1}^{m-1} L_i^{-1}
\otimes v_j,
\een
where $v_j\in \CC^{m-1}$ is the eigenvector of $c$ with eigenvalue
$\eta^j$ where $\eta=e^{2\pi\ii/m}$. The denominator in
\eqref{c-trace} takes the form
\ben
\op{str}_c( \wedge^\bullet (N^\vee_{X^m|X})) =
\prod_{i=1}^D \prod_{j=1}^{m-1} (1-L^{-1}_i \eta^j).
\een
Recalling the Hierzerbruch--Riemann--Roch formula, we get that the
Euler characterisics of \eqref{c-trace} is
\ben
\int_X \prod_{i=1}^D \frac{x_i}{1-e^{-x_i}}\,
\frac{\op{ch}(\psi^m(F\otimes W))}{
  \prod_{i=1}^D \prod_{j=1}^{m-1} (1-e^{-x_i} \eta^j)}=
\int_X \prod_{i=1}^D \frac{x_i}{1-e^{-mx_i}}\,
\op{ch}(\psi^m(F\otimes W)).
\een
Let $\op{deg}:H^*(X)\to H^*(X)$ be the complex degree operator, that
is, $\op{deg}(\phi)=i \phi$ for $\phi\in H^{2i}(X)$. Note that
$\op{ch}\circ\psi^m = m^{\op{deg}} \circ \op{ch}$ and that
$\int_X\alpha=
m^{D} \int_X m^{-\op{deg}} \alpha$. Therefore, the above formula
coincides with
\ben
m^{D} 
\int_X \prod_{i=1}^D \frac{m^{-1}x_i}{1-e^{-x_i}}\,
\op{ch}(F\otimes W) = \chi(F\otimes W).
\qed
\een

\begin{corollary}
We have $\iota_{-m}(W)(\nu_{m,\alpha})=m \,\chi(W\otimes \Phi^\alpha)$. 
\end{corollary}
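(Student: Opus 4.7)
The plan is to derive the corollary as a direct application of Lemma \ref{le:contraction} to the element $E=\nu_{m,\alpha}=(\Phi^\alpha)^{\boxtimes m}\otimes p_m$. To invoke the lemma, I need to identify the class $F\in K(X)$ such that $\psi^m(F)=\op{Tr}_{(1,2,\dots,m)}(\nu_{m,\alpha}|_X)$, and then recognize $\chi(F\otimes W)$ as $m\,\chi(W\otimes \Phi^\alpha)$.

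First I would restrict $\nu_{m,\alpha}$ to the diagonal $X\subset X^m$. Since $(\Phi^\alpha)^{\boxtimes m}|_X=(\Phi^\alpha)^{\otimes m}$ and $p_m$ pulls back from a point, one has $\nu_{m,\alpha}|_X=(\Phi^\alpha)^{\otimes m}\otimes p_m$, where the second factor is a virtual trivial $S_m$-bundle.

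Next I would compute the trace of the cycle $c=(1,2,\dots,m)$ on this bundle. Because $p_m$ is a trivial bundle whose fiber is a virtual representation of $S_m$, trace is multiplicative with respect to the tensor product decomposition, giving
\[
\op{Tr}_c\bigl((\Phi^\alpha)^{\otimes m}\otimes p_m\bigr)=
\op{Tr}_c\bigl((\Phi^\alpha)^{\otimes m}\bigr)\cdot \chi_{p_m}(c).
\]
By Lemma \ref{le:Adams}, the first factor equals $\psi^m(\Phi^\alpha)$, while by the definition of $p_m$ recalled in Section \ref{sec:K-Fock}, $\chi_{p_m}(c)=m$. Hence $\op{Tr}_c(\nu_{m,\alpha}|_X)=m\,\psi^m(\Phi^\alpha)=\psi^m(m\Phi^\alpha)$.

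Finally, since we are working with K-theory with complex coefficients, $\psi^m$ is a linear isomorphism on $K(X)$, so $F=m\Phi^\alpha$ is the unique solution. Applying Lemma \ref{le:contraction} then gives $\iota_{-m}(W)(\nu_{m,\alpha})=\chi(m\Phi^\alpha\otimes W)=m\,\chi(W\otimes \Phi^\alpha)$, which is the desired identity. There is no serious obstacle here; the only point that requires a brief justification is the multiplicativity of the trace in the tensor product with the (virtual) trivial bundle $p_m$, but this follows immediately from the fact that the $S_m$-action on $p_m$ is independent of the base point.
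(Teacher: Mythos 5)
Your proof is correct and follows essentially the same route as the paper: restrict $\nu_{m,\alpha}=(\Phi^\alpha)^{\boxtimes m}\otimes p_m$ to the diagonal, use multiplicativity of the trace with the trivial factor $p_m$ together with $\chi_{p_m}(c)=m$ and Lemma \ref{le:Adams} to get $\op{Tr}_c=\psi^m(m\Phi^\alpha)$, and then apply Lemma \ref{le:contraction} with $F=m\Phi^\alpha$. The only difference is that you spell out the multiplicativity step that the paper leaves implicit.
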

\proof
We have
\ben
\op{Tr}_c((\Phi^\alpha)^{\otimes m} \otimes p_m)=
\op{Tr}_c((\Phi^\alpha)^{\otimes m} \otimes \op{Tr}_c( p_m) =
m\psi^m(\Phi^\alpha)=\psi^m(m\Phi^\alpha).
\een
It remains only to recall Lemma \ref{le:contraction}.
\qed

The contraction operation extends to the entire Fock space. Namely,
let us define
\ben
\xymatrix{
\iota_{-m}(W): K_{S_n}(X^n)\ar[r] &  K_{S_{n-m}}(X^{n-m})}
\een
by composing the restriction and the anihilation operations as follows:
\ben
\xymatrixcolsep{5pc}
\xymatrix{
K_{S_n}(X^n) \ar[r]^-{
\op{Res}_{S_m\times S_{n-m}}^{S_n}} & 
K_{S_m}(X^m)\otimes K_{S_{n-m}}(X^{n-m}) \ar[r]^-{
  \iota_{-m}(W)\otimes \op{id} } &
K_{S_{n-m}}(X^{n-m}).}
\een
\begin{theorem}\label{thm:Heisenberg}
  Under the isomorphism $\F(X)\cong \CC[\nu_{m,\alpha}(1\leq \alpha\leq
  m, n\geq 0)]$, we have
  $\iota_{-m}(\Phi_\alpha)=m\, \partial/\partial \nu_{m,\alpha}$. 
\end{theorem}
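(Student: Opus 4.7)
The plan is to reduce the statement to two facts: (1) $\iota_{-m}(\Phi_\alpha)$ is a derivation of the commutative algebra $\F(X)$, and (2) its values on the generators $\nu_{r,\beta}$ coincide with those of $m\,\partial/\partial\nu_{m,\alpha}$. Since $\F(X)=\CC[\nu_{r,\beta}]$ is a polynomial ring by Proposition \ref{prop:wang}, a derivation is determined by its values on generators, so (1) and (2) together imply the theorem.

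For (2), the case $r=m$ is the content of the corollary above: $\iota_{-m}(\Phi_\alpha)(\nu_{m,\beta}) = m\,\chi(\Phi_\alpha\otimes\Phi^\beta) = m\delta_\alpha^\beta$. For $r>m$, since $\nu_{r,\beta} = (\Phi^\beta)^{\boxtimes r}\otimes p_r$ and the external product restricts trivially, one reduces to showing $\op{Res}^{S_r}_{S_m\times S_{r-m}}(p_r)=0$ in $R(S_m\times S_{r-m})\otimes\CC$. This is a character calculation: $\chi_{p_r}$ is supported on the conjugacy class of $r$-cycles, and no $r$-cycle lies in $S_m\times S_{r-m}$ when $0<m<r$, so the restricted character vanishes identically. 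The case $r<m$ is handled by extending $\iota_{-m}(\Phi_\alpha)$ to the entire $\F(X)$ by zero on $K_{S_r}(X^r)$, which agrees with $m\,\partial/\partial\nu_{m,\alpha}(\nu_{r,\beta})=0$.

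The substantive step is (1). For $E_1\in K_{S_n}(X^n)$ and $E_2\in K_{S_k}(X^k)$, we have $E_1\cdot E_2=\op{Ind}^{S_{n+k}}_{S_n\times S_k}(E_1\boxtimes E_2)$, and we apply the Mackey-type identity of Proposition \ref{prop:IR-RI} with $G=S_{n+k}$, $H_1=S_n\times S_k$, and $H_2=S_m\times S_{n+k-m}$. By Proposition \ref{prop:dc}, the double cosets are parametrized by $2\times 2$ matrices $\gamma$ with row sums $(n,k)$ and column sums $(m,n+k-m)$; writing $a=\gamma_{11}$, one has $K_\gamma=S_a\times S_{n-a}\times S_{m-a}\times S_{k-m+a}$, which intersects the $S_m$ factor of $S_m\times S_{n+k-m}$ in $S_a\times S_{m-a}$. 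Applying $\op{tr}_{(1,\dots,m)}$ on the $S_m$-piece of $\op{Ind}^{S_m\times S_{n+k-m}}_{K_\gamma}(\cdot)$ and invoking Proposition \ref{prop:induced_trace}, only those double cosets survive for which a conjugate of the $m$-cycle $(1,\dots,m)$ lands in $S_a\times S_{m-a}$. Since an $m$-cycle has a single orbit of length $m$, it can lie in $S_a\times S_{m-a}$ only if $a=0$ or $a=m$. The $a=m$ summand, after pushforward and tensoring with $\Phi_\alpha^{\boxtimes m}\boxtimes 1^{\boxtimes(n+k-m)}$, yields $\iota_{-m}(\Phi_\alpha)(E_1)\cdot E_2$; the $a=0$ summand, after tracking the permutation $s_\gamma$ that interchanges the two blocks, yields $E_1\cdot\iota_{-m}(\Phi_\alpha)(E_2)$.

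The main obstacle is this last unpacking: one must confirm that the two surviving Mackey summands, once pushed through the trace $\op{tr}_{(1,\dots,m)}$ and the pushforward $\pi^{(n+k-m)}_*$, assemble exactly into the two Leibniz terms. This requires carefully factoring the induction through the common $S_m$ block (in the $a=m$ case) or through the rearrangement by $s_\gamma$ (in the $a=0$ case), and verifying that the residual induction from $S_{n-m}\times S_k$ (respectively $S_n\times S_{k-m}$) up to $S_{n+k-m}$ recovers precisely the multiplication on $\F(X)$. This bookkeeping is essentially the entire content; once completed, the theorem follows immediately from Proposition \ref{prop:wang}.
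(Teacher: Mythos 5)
Your proposal is correct in outline and uses exactly the same toolkit as the paper --- the Mackey formula of Proposition \ref{prop:IR-RI}, the double coset classification of Proposition \ref{prop:dc}, the induced-trace formula of Proposition \ref{prop:induced_trace}, and the corollary computing $\iota_{-m}(W)(\nu_{m,\alpha})$ --- but it organizes them differently. The paper does not prove the full Leibniz rule: it computes the commutator $[\iota_{-m}(\Phi_\beta),\nu_{l,\alpha}]$ by taking $E_1=(\Phi^\alpha)^{\boxtimes l}\otimes p_l$ and $E_2=E$ arbitrary, and the intermediate double cosets ($0<a<l$ in its notation) die for the cheap reason that $\op{Res}^{S_l}_{S_a\times S_c}(p_l)=0$ whenever both $a,c>0$; the $m$-cycle trace argument is then invoked only once, to show the second surviving coset contributes $0$ unless $l=m$. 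Since the commutator is a scalar and $\iota_{-m}$ kills the vacuum by degree reasons, this pins down the operator. Your route proves the stronger statement that $\iota_{-m}(\Phi_\alpha)$ is a derivation on all of $\F(X)$, which forces you to kill the intermediate cosets $0<a<m$ by the $m$-cycle trace argument for arbitrary $E_1,E_2$; this works, but note that to legitimately apply Proposition \ref{prop:induced_trace} (which assumes the group acts trivially on the base) you must first factor $\op{Ind}^{S_m\times S_{n+k-m}}_{S_a\times S_c\times S_b\times S_d}$ as an external product of an induction inside $S_m$ (trivial on the base after $\pi^{(n+k-m)}_*$) and one inside $S_{n+k-m}$, and check that the pushforward commutes with the latter. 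That factorization, together with the identification of the $a=0$ and $a=m$ summands with the two Leibniz terms, is precisely the ``bookkeeping'' you defer; in the paper it is carried out explicitly (in the special case $E_1=(\Phi^\alpha)^{\boxtimes l}\otimes p_l$), and it is genuinely where the work lies, so your write-up is not complete until you do it. What your version buys in exchange is a conceptually cleaner statement (the annihilation operators are derivations, independent of Proposition \ref{prop:wang}'s choice of generators), and your character-theoretic observation that $\op{Res}^{S_r}_{S_m\times S_{r-m}}(p_r)=0$ for $0<m<r$ is a correct and slightly more direct way to handle the generators with $r\neq m$ than rerunning the coset analysis.
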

\proof
Let us compute the composition
\ben
\xymatrix{
  \iota_{-m}(W) \circ \nu_{l,\alpha} :
  K_{S_{n'}}(X^{n'})\ar[r] &
  K_{S_{n''}}(X^{n''})
}
\een
where $k:=n'+l=m+n''$. By definition
\ben
\iota_{-m}(W) (\nu_{l,\alpha} E)=
\iota_{-m}(W)\otimes \op{id}\, \circ \, 
\op{Res}^{S_k}_{S_m\times S_{n''}} \circ
\op{Ind}^{S_k}_{S_l\times S_{n'}} (
V^{\boxtimes l}\otimes p_l \boxtimes E),
\een
where we put $V=\Phi^\alpha$ for the ease of notation.  
We would like to use Proposition \ref{prop:IR-RI} with $G=S_k$,
$H_1:=S_l\times S_{n'}$ and $H_2=S_m\times S_{n''}$. The double cosets 
in $S_l\times S_{n'} \backslash G/S_m\times S_{n''}$, according to
Proposition \ref{prop:dc}, are parametrized by the set of $2\times 2$
matrices
\ben
\begin{bmatrix}
  a & b \\
  c & d
\end{bmatrix},
\quad
\begin{matrix}
  a+b=m & a+c =l,\\
  c+d=n'' & b+d = n',
\end{matrix}
\een
where $a,b,c,d\in \ZZ_{\geq 0}$. Moreover, if $s\in S_k$ is the
representative of the double coset $S_l\times S_{n'} \backslash
G/S_m\times S_{n''}$ corresponding to a matrix of the above form (see
Proposition \ref{prop:dc}), then
\ben
K_{1,s}=H_1\cap sH_2 s^{-1} = S_a\times S_c \times S_b\times S_d
\een
and
\ben
K_{2,s}=H_2\cap s^{-1}H_1 s = S_a\times S_b \times S_c\times S_d.
\een
In other words, $s\in S_k$ is the permutation that fixes the first $a$
and the last $d$ numbers in $1,2,\dots,k$, that is,
\ben
s(i)=i\ (1\leq i\leq a), \quad
s(a+b+c+j)=a+b+c+j\ (1\leq j\leq d),
\een
and that switches the two blocks $a+1,\dots, a+b$ and $a+b+1,\dots,
a+b+c$, that is, 
\ben
s(a+i)=a+c+i \ (1\leq i\leq b),\quad
s(a+b+j)=a+j\ (1\leq j\leq c).
\een
According to Proposition \ref{prop:IR-RI} we have 
\beq\label{RI}
\op{Res}^{S_k}_{S_m\times S_{n''}} \circ
\op{Ind}^{S_k}_{S_l\times S_{n'}} (
V^{\boxtimes l}\otimes p_l \boxtimes E)=
\sum \op{Ind}^{S_m\times S_{n''}}_{
  S_a\times S_b\times S_c\times S_d} \circ
s^*\circ
\op{Res}^{S_l\times S_{n'}}_{
  S_a\times S_c\times S_b\times S_d}
(
V^{\boxtimes l}\otimes p_l \boxtimes E),
\eeq
where the sum is over all $2\times 2$ matrices of the above type
representing the double cosets in $H_1\backslash G/H_2$.  
Let us examine the restriction
\beq\label{restriction}
\op{Res}^{S_l\times S_{n'}}_{S_a\times S_c\times S_b\times S_d}(
V^{\boxtimes l}\otimes p_l \boxtimes E) .
\eeq
Since $\op{Res}^{S_l}_{S_a\times S_c}(p_l)$ is non-zero only if
$S_l=S_a\times S_c$, we get that the above restriction is non-zero
only in two cases: $a=0$, $c=l$ or $a=l$, $c=0$. Let us analyze these
two cases separately.

First, suppose that $a=0$ and $c=l$. Then $b=m$ and $d=n'-m=n''-l$.
We get
\ben
\op{Res}^{S_l\times S_{n'}}_{S_a\times S_c\times S_b\times S_d}(
V^{\boxtimes l}\otimes p_l \boxtimes E) ) =
V^{\boxtimes l}\otimes p_l \boxtimes
\op{Res}^{S_{n'}}_{S_m\times S_{n'-m}} (E).
\een
The contribution to \eqref{RI} becomes
\ben
\op{Ind}^{S_m\times S_{n''}}_{
  S_m\times S_l\times S_{n''-l}} \circ
s^*(
V^{\boxtimes l}\otimes p_l \boxtimes
\op{Res}^{S_{n'}}_{S_m\times S_{n'-m}} (E)).
\een
Applying to the above expression the contraction operation
$\iota_{-m}(W)$ we get $\nu_{l,\alpha} \, \iota_{-m}(W) (E)$.

The second case is $a=l$ and $c=0$. Then $b=m-l=n'-n''$ and $d=n''$. The
restriction \eqref{restriction} becomes
\ben
\op{Res}^{S_l\times S_{n'}}_{S_a\times S_c\times S_b\times S_d}(
V^{\boxtimes l}\otimes p_l \boxtimes E) ) =
V^{\boxtimes l}\otimes p_l \boxtimes
\op{Res}^{S_{n'}}_{S_{n'-n''}\times S_{n''}} (E).
\een
The contribution to \eqref{RI} becomes (note that now $s=\op{id}$)
\ben
\op{Ind}^{S_m\times S_{n''}}_{
  S_l\times S_{m-l}\times S_{n''}} \Big(
V^{\boxtimes l}\otimes p_l \boxtimes
\op{Res}^{S_{n'}}_{S_{m-l}\times S_{n''}} (E)\Big).
\een
Note that when we apply the contraction operation
$\iota_{-m}(W)$ we will get a sum of terms that have a factor of the form
$\op{Tr}_{(1,2,\dots,m)}\op{Ind}^{S_m}_{S_l\times S_{m-l}}(V^{\otimes
  l}\otimes p_l\otimes F)$ where $F$ is a representation of $S_{m-l}$,
that is, a $S_{m-l}$-vector bundle on $X$ for which $S_{m-l}$ acts
trivially on $X$.
This trace is $0$ for $m\neq l$. Indeed, recalling the formula for the
trace of an induced vector bundle (see Proposition
\ref{prop:induced_trace}), we get that the trace must be 
a sum over traces of elements in $S_l\times S_{m-l}$ of the form 
$x(1,2,\dots,m) x^{-1}$. The latter is a cycle of length $m$. However, the
group $S_l\times S_{m-l}$ contains such elements only if
$m=l$. Therefore, the contribution in this case is non-zero only if
$m=l$, that is,
\ben
(\iota_{-m}(W)\otimes \op{id}) \,
(V^{\boxtimes m}\otimes p_m \boxtimes E) =
m\, \chi(W\otimes V) \, E.
\een
Recall that $V=\Phi^\alpha$ and let us specalize $W=\Phi_\beta$. Then
the above computations imply the following commutation relations:
\ben
[\iota_{-m}(\Phi_\beta), \nu_{l,\alpha}] =m\delta_{m,l} \delta_{\alpha,\beta}.
\een
The statement in the theorem follows.
\qed


\section{Permutation-equivariant K-theoretic Gromov--Witten theory}

Suppose now that $X$ is a smooth projective variety. We would like to
recall the background on permutation-equivariant KGW theory. Let
\ben
\F(X)\cong
\CC[\nu_{m,\alpha}(1\leq \alpha \leq N, m\geq 1)]
\een
be the $K$-theoretic Fock space of $X$.

\subsection{Translation symmetry}
The correlators \eqref{nu-cor} have the following symmetry:
\beq\label{KGW-ti}
\frac{\partial}{\partial \nu_{1,\alpha}} \langle
\mathbf{t}(L_1),\dots,\mathbf{t}(L_n)\rangle_{0,n}(\nu)=
\langle \mathbf{t}(L_1),\dots,\mathbf{t}(L_n),\Phi_\alpha \rangle_{0,n+1}(\nu).
\eeq
The derivative on the LHS is equivalent to the operation
$\iota_{-1}(\Phi_\alpha)$. The correlator has the form
$\op{ev}^{(k)}_*(A\otimes \op{ev}_{(n)}^*B)$. Its derivative becomes
\ben
\pi_*\left(
  \Phi_\alpha \boxtimes 1^{\boxtimes k-1} \otimes
  \op{ev}^{(k)}_*(A\otimes \op{ev}_{(n)}^*B)\right) =
\pi_*\left(
\op{ev}^{(k)}_*(A\otimes \op{ev}_{(n)}^*B\otimes
\op{ev}_{n+1}^*\Phi_\alpha )\right)
\een
where $\pi:X^k=X\times X^{k-1}\to X^{k-1}$ is the projection map and
we used that
\ben
(\op{ev}^{(k)})^*(\Phi_\alpha \boxtimes 1^{\boxtimes k-1} )=
\op{ev}_{n+1}^*\Phi_\alpha.
\een
Note that $\pi\circ \op{ev}^{(k)}=\op{ev}^{(k-1)}$ and
$\op{ev}_{(n)}^*B\otimes
\op{ev}_{n+1}^*\Phi_\alpha=\op{ev}_{(n+1)}^*(B\boxtimes
\Phi_\alpha)$.

Using formula \eqref{KGW-ti}, we get that no information is lost if we
set $\nu_{1,\alpha}=0$ for all $\alpha$. Indeed, if we do this, then
in order to recover the dependence on $\nu_{1,\alpha}$ we simply have
to translate $t_{0,\alpha}\mapsto t_{0,\alpha}+\nu_{1,\alpha}$.  

\subsection{Givental's permutation-equivariant correlators}
\label{sec:pec}
We would like to compare our definition to the permutation-equivariant
KGW invariants of Givental. The definition evolved on 3 stages and the
most general one appears in \cite{Giv2017}, Section 1. Givental's
correlators depend on the choice of a $\lambda$-algebra $\Lambda$. The
inputs of the correlators are parametrized by a sequence of formal
variables $\mathbf{t}=(\mathbf{t}_1,\mathbf{t}_2,\dots)$, 
$\mathbf{t}_r=\{t_{r,k,\alpha}\}$ where the index set is defined by $r\geq 1, k\in
\ZZ,$ and $1\leq \alpha\leq N$ and $t_{r,k,\alpha}\in \Lambda$. Let
$\mathbf{l}=(l_1,l_2,\dots)$ be a sequence such that $l_i\geq 0$ and
$l_i>0$ for only finitely many $i$. Givental introduced correlators of the following form:
\beq\label{corr-giv}
\langle 
\underbrace{\mathbf{t}_1,\dots,\mathbf{t}_1}_{l_1};
\underbrace{\mathbf{t}_2,\dots,\mathbf{t}_2}_{l_2};\dots ;
\underbrace{\mathbf{t}_r,\dots,\mathbf{t}_r}_{l_r};\dots
\rangle_{g,\mathbf{l},d},
\eeq
where $\mathbf{t}_i$ $(i=1,2,\dots)$ is repeated $l_i$ times, that is,
the correlator has $l_1+l_2+\cdots$ inputs. Put $n=\sum_{r=1}^\infty r
l_r$ and let us fix a permutation $h\in S_n$ whose cycle type is
determined by $\mathbf{l}$, that is, the action of $h$ on $\{1,2,\dots,n\}$ has exactly 
$l_r$ orbits of length $r$ for all $r=1,2,\dots$. Here, for a given
element $h\in S_n$ and $i\in \{1,2,\dots,n\}$ let $r>0$ be the minimal positive
integer, such that, $h^r(i)=i$. We refer to the sequence
$(i,h(i),h^2(i),\dots,h^{r-1}(i))$ as the  orbit of $h$ through
$i$ and to the number $r$ as the length of the orbit. The set of all
orbits of $h$ will be denoted by $O(h)$. The correlator is by
definition the following sum of super-traces  
\beq\label{str}
\sum_{\alpha=(\alpha_I),\beta=(\beta_I)}
\op{str}_h \ 
H^*\Big(\overline{\M}_{g,n}(X,d), \O_{g,n,d}\otimes 
\otimes_{I\in O(h)} \otimes_{i\in I} 
(L_i^{\beta_I} \op{ev}_i^*\Phi_{\alpha_I}) \Big)\ 
\prod_{I\in O(h)} \frac{1}{r_I}
\Psi^{r_I}(t_{r_I,\beta_I,\alpha_I}),
\eeq
where the sum is
over all sequences $\alpha=(\alpha_I)$ and $\beta=(\beta_I)$
consisting of integers $1\leq \alpha_I\leq N$ and $\beta_I\in \ZZ$ -- one
for each orbit $I\in O(h)$, $r_I$ is the number of elements in the
orbit $I\in O(h)$, and $\Psi^r$ is the $r$-th Adam's operation in
$\Lambda$. Note that the definition of the correlator
\eqref{corr-giv} is independent of the choice of $h$, that is, it
depends only on the cycle type $\mathbf{l}$ of $h$.
Finally, the genus-$g$ potential of permutation-equivariant theory is
defined by 
\ben
\F^{(g)}(\mathbf{t}) =\sum_{d} Q^d \sum_{\mathbf{l}}
\frac{1}{\prod_r l_r!}\,
\langle 
\underbrace{\mathbf{t}_1,\dots,\mathbf{t}_1}_{l_1};
\underbrace{\mathbf{t}_2,\dots,\mathbf{t}_2}_{l_2};\dots ;
\underbrace{\mathbf{t}_r,\dots,\mathbf{t}_r}_{l_r};\dots
\rangle_{g,\mathbf{l},d}.
\een
In order to compare to our definition, let us put $t_{r,k,\alpha}=0$
for all $r>1$ and $k\neq 0$, that is, let us consider only invariants that do
not involve descendants at the permutable marked points. Also, let us
restrict \eqref{nu-cor} to $\nu_{1,\alpha}=0$. The super-trace in \eqref{str} becomes 
\beq\label{trace_h}
\op{tr}_h\, 
\op{\pi}_* \left(
  \boxtimes_{I\in O_{>1}(h)} 
  \Phi_{\alpha_I}^{\boxtimes r_I} 
  \op{ev}^{(k)}_*\Big(
  \prod_{i\in O_1(h)} (L_i^{\beta_i}\op{ev}^*(\Phi_{\alpha_i})
  \Big)
  \right), 
\eeq
where $O_{>1}(h)$ (resp. $O_1(h)$) denotes the set of orbits of $h$ of
length $>1$ (resp. $1$), $k=2l_2+3l_3+\cdots$, and $\pi:X^k\to
\op{pt}$ is the contraction map. Without loss of
generality, we may assume that $O_1(h)=\{(1),(2),\dots,(l_1)\}$.
Recalling Theorem \ref{thm:Heisenberg} we get that the trace
\eqref{trace_h} coincides with
\ben
\left.
\prod_{I\in O_{>1}(h)}  (r_I \partial_{\nu_{r_I,\alpha_I}}) \,
\langle \Phi_{\alpha_1} L^{\beta_1},\dots,\Phi_{\alpha_{l_1}}
L^{\beta_{l_1}}\rangle_{g,l_1,d} (\nu) \right|_{\nu=0}= 
C_{\mathbf{m}}(\alpha_1,\beta_1,\dots,\alpha_{l_1},\beta_{l_1})\,
\prod_{r>1}\prod_{\alpha=1}^N r^{m_{r,\alpha}}\, m_{r,\alpha}! 
\een
where $\mathbf{m}=(m_{r,\alpha})$, $m_{r,\alpha}:=\op{card}\{I\in O_r(h)\ |\ \alpha_I=\alpha\}$, and $C_{\mathbf{m}}(\alpha_1,\beta_1,\dots,\alpha_{l_1},\beta_{l_1})$ is the coefficient in front of the monomial $\prod_{r>1,\alpha} \nu_{r,\alpha}^{m_{r,\alpha}}$ in the correlator 
$
\langle \Phi_{\alpha_1} L^{\beta_1},\dots,
\Phi_{\alpha_{l_1}} L^{\beta_{l_1}}\rangle_{g,l_1,d} (\nu) $. Note
that $\sum_{\alpha=1}^N m_{r,\alpha}=l_r$ and that the number of
sequences $\alpha=(\alpha_I)$ and $\beta=(\beta_I)$ in the sum in
\eqref{str}, such that,  
\ben
\prod_{I\in O(h)} 
\Psi^{r_I}(t_{r,\beta_I,\alpha_I}) = 
t_{1,\beta_1,\alpha_1}\cdots t_{1,\beta_{l_1},\alpha_{l_1}}
\prod_{r>1}\prod_{\alpha=1}^N
\Psi^r(t_{r,0,\alpha})^{m_{r,\alpha}}
\een
is precisely $\prod_{r>1}\tfrac{l_r!}{m_{r,1}!\cdots m_{r,N}!}$ because the values of $\beta_I$ and $\alpha_I$ are fixed for the one-point orbits $I\in O_1(h)$ and for the more than one-point-orbits, we have $\beta_I=0$ and we have to choose $m_{r,\alpha}$ orbits $I$ in $O_r(h)$ for which $\alpha_I$ will be assigned value $\alpha$. Now it is clear that the coefficient in front of $t_{\beta_1,\alpha_1}\cdots t_{\beta_{l_1},\alpha_{l_1}}
\prod_{r>1}\prod_{\alpha=1}^N
\Psi^r(t_{r,0,\alpha})^{m_{r,\alpha}}$ in \eqref{str} is 
$C_{\mathbf{m}}(\alpha_1,\beta_1,\dots,\alpha_{l_1},\beta_{l_1}) \prod_{r>1} l_r! $. Therefore, under the substitutions
\beq\label{substitution}
\nu_{1,\alpha}=0 ,\quad
\nu_{r,\alpha}:=\Psi^r(t_{r,0,\alpha})\ (r>1), \quad
t_{k,\alpha}:=t_{1,k,\alpha} \ (k\in \ZZ),\quad 
t_{r,k,\alpha}:=0 \ (r>1, k\neq 0),
\eeq
where $1\leq \alpha\leq N$, 
we get that the genus-$g$ potential $\F^{(g)}(\mathbf{t})$ coincides with 
\ben
\sum_{n=0}^\infty 
\frac{1}{n!}
\langle \mathbf{t}(L_1),\dots,\mathbf{t}(L_n)\rangle_{g,n}(\nu).
\een
Therefore, definition \eqref{nu-cor} contains all the information for
permutation-equivariant KGW invariants that do not involve descendants
at the permutable marked points.

\subsection{Descendants at the permutable entries}
\label{sec:perm-desc}

Let us sketch the necessary extension of the Fock
space which will alow us to keep track of descendants. The reader not
interested in the construction can skip this section as the results
here would not be used in the rest of the paper.  The main point 
is that we have to introduce the K-theoretic Fock space of
$X\times \ZZ$ where $\ZZ$ is equipped with the discrete topology. Note
that this is a topological space with infinitely many connected 
components $X_m:=X\times \{m\}$, $m\in \ZZ$. The K-theoretic Fock
space is defined by
\ben
\F(X\times \ZZ):= \varinjlim_{k} \ 
\F(X\times [-k,k])= \bigcup_{k=0}^\infty \F(X\times [-k,k]),
\een
where the inclusion $\F(X\times [-k,k])\subset \F(X\times [-k-1,k+1])$
is defined via the pushforward with respect to the natural inclusion
$X\times [-k,k]\subset X\times [-k-1,k+1]$. Theorem
\ref{thm:Heisenberg} implies that 
\beq\label{fock:pol}
\F(X\times \ZZ)\cong 
\CC[\nu_{r,k,\alpha}\ (r\geq 1, k\in \ZZ, 1\leq \alpha\leq N)],
\eeq
where the vacuum is the same as before, that is, $1\in \F(X)=\F(X\times
\{0\}) \subset \F(X\times \ZZ)$ and the creation operator
$\nu_{r,k,\alpha}$ is defined by the induction operation and exterior
tensor product by $(\Phi^{\alpha}_k)^{\boxtimes r}\otimes p_r$ where
$\Phi^\alpha_k$ is a virtual vector bundle on $X\times \ZZ$  whose
restriction to the connected component $X_m$ is $\Phi^\alpha$ for $m=
k$ and $0$ if $m\neq k$.   

The following isomorphism will be needed in the definition of KGW
invariants with values in $\F(X\times \ZZ)$. Suppose
that $Y=Y_1\sqcup\dots \sqcup Y_n$ has finitely many connected
components. We have an isomorphism 
\beq\label{space-ind}
K_{S_k}(Y^k)\cong 
\bigoplus_{b=(b_1, \dots, b_k)}
K_{S_b}(Y_{b_1}\times \cdots \times Y_{b_k}),
\eeq
where the direct sum is over all monotonely increasing sequences
satisfying $1\leq b_1\leq b_2\leq \cdots \leq b_k\leq n$ and
$S_b=\{\sigma\in S_n\ |\ b_{\sigma(i)}=b_i\ \forall i\}$. The
isomorphism is induced by restricting $S_k$-bundles on $Y^k$ to
$Y_{b_1}\times \cdots \times Y_{b_k}$. Note that the inverse of this
isomorphism, that is, constructing an $S_k$-bundle from an
$S_b$-bundles, is given by an operation that in some sense generalizes
the induction operation of vector bundles: first
we have to make an $S_k$-space from an $S_b$-space by adding extra
connected components and then extend the
$S_b$-bundle to an $S_k$-bundle via the usual induction operation.

Applying the isomorphism \eqref{space-ind} to our settings we get
\beq\label{fock-ind}
\F(X\times \ZZ)\cong \bigoplus_{b=(b_1,\dots,b_k)}
K_{S_b}(X_{b_1}\times \cdots \times X_{b_k}),
\eeq
where the direct sum is over all monotonely increasing sequences of
integer numbers $b_1\leq b_2\leq \cdots\leq b_k$, $S_b$ is the (Young)
subgroup of $S_k$ preserving the sequence $b$, and $X_{b_i}=X\times
\{b_i\}$ is the $b_i$-th connected component of $X\times \ZZ$.

Now we are in position to define KGW invariant with values in
$\F(X\times \ZZ)$. The definition \eqref{KGW-inv} should be modified
as follows. We replace the sum over $k$ with a sum over all monotonely
increasing sequences of integers $b=(b_1,\dots,b_k)$, that is,
$b_1\leq \cdots \leq b_k$, define the evaluation map
\ben
\op{ev}^{(b)} :\overline{\M}_{g,n+k}(X,d)\to
X_{b_1}\times \cdots \times X_{b_k},\quad 
(C,p_1,\dots,p_{n+k};f)\mapsto ((f(p_{n+1}),b_1),\dots ,
(f(p_{n+k}),b_k)),
\een
and replace the pushforward by
\ben
\op{ev}^{(b)}_*
\left(
  \O_{g,n+k,d}\otimes L_1^{i_1}\otimes \cdots \otimes L_n^{i_n}\otimes
  L_{n+1}^{b_1}\otimes \cdots \otimes L_{n+k}^{b_k}\otimes 
  \op{ev}_{(n)}^*(\Phi_{a_1}\boxtimes \cdots \boxtimes \Phi_{a_n})
\right).
\een
The above pushforward takes value in $K_{S_b}(X_{b_1}\times \cdots
\times X_{b_k})$. Recalling the isomorphisms \eqref{fock-ind} and
\eqref{fock:pol} we get that the KGW invariants take value in a
certain completion of the polynomial ring
$\CC[\nu_{r,k,\alpha} (r\geq 1, k\in \ZZ,1\leq \alpha\leq N)]$.
Similarly to \eqref{KGW-ti}, we have 
\beq\label{KGW-desc_ti}
\frac{\partial}{\partial \nu_{1,k,\alpha}} \langle
\mathbf{t}(L_1),\dots,\mathbf{t}(L_n)\rangle_{0,n}(\nu)=
\langle
\mathbf{t}(L_1),\dots,
\mathbf{t}(L_n),L_{n+1}^k\Phi_\alpha \rangle_{0,n+1}(\nu).
\eeq
Using the Heisenberg structure of $\F(X\times \ZZ)$, the same argument
as in Section \ref{sec:pec} proves that under the substitution
\ben
\nu_{1,k,\alpha}=t_{1,k,\alpha}-t_{k,\alpha},\quad
\nu_{r,k,\alpha}=\Psi^r(t_{r,k,\alpha})\ (r>1)
\een
the permutation-equivariant genus-g potential  takes the form
\ben
\F^{(g)}(\mathbf{t})=\sum_{n=0}^\infty
\frac{1}{n!} \langle
\mathbf{t}(\psi_1),\dots,\mathbf{t}(\psi_n)\rangle_{g,n} (\nu).
\een

\subsection{String equation}
\label{sec:string}
The following formula holds:
\ben
\langle
\mathbf{t}(L_1),\dots,\mathbf{t}(L_n),1\rangle_{0,n+1}(\nu)=
\langle \mathbf{t}(L_1),\dots,\mathbf{t}(L_n)\rangle_{0,n}(\nu) + \\
\sum_{i=1}^n
\langle
\mathbf{t}(L_1),\dots,
\frac{
  \mathbf{t}(L_i)-\mathbf{t}(1)
}{L_i-1},
\dots \mathbf{t}(L_n)\rangle_{0,n}(\nu) + Q_n(\mathbf{t},\nu),
\een
where $Q_n(\mathbf{t},\nu)=0$ for $n>2$ and
\allowdisplaybreaks[1]
\begin{align*}
Q_2(\mathbf{t},\nu)  & =  (\mathbf{t}(1), \mathbf{t}(1)) \\
  Q_1(\mathbf{t},\nu)  & = (\mathbf{t}(1),\nu_1)\\
  Q_0(\mathbf{t},\nu) & = 
\frac{1}{2}(\nu_1,\nu_1)+\frac{1}{2} (\psi^2(\nu_2),1)
\end{align*}
where $\nu_k=\sum_{\alpha=1}^N \nu_{k,\alpha} \Phi_\alpha$ and
$\psi^m(\nu_k):=\sum_{\alpha=1}^N
\nu_{k,\alpha}\psi^m(\Phi_\alpha)$. The above formula is 
known as the {\em string equation}. The standard technique,
based on the map forgetting the last marked point, works in our
settings too (see \cite{Giv2015Oct}, Proposition 1). We would like to
work out only the correction terms $Q_n(\mathbf{t},\nu)$.  

Suppose that $d=0$ and $n+k=2$ where $k$ is the number of permutable
marked points and $n$ is the number of the remaining ones. There
are 3 cases. First, if $n=2$ and $k=0$, we get
\ben
\langle
\mathbf{t}(L_1),\mathbf{t}(L_2),1\rangle_{0,3,0} =
(\mathbf{t}(1),\mathbf{t}(1))=Q_2( \mathbf{t},\nu).
\een
The second case is when $n=1$ and $k=1$: the moduli space
$\overline{\M}_{0,3}(X,0)= \overline{\M}_{0,3}\times X=X$ and the
corresponding contribution to the KGW invariant is  
\ben
\op{ev}^{(1)}_* (\op{ev}_{(1)}^*(\mathbf{t}(1))
)=\mathbf{t}(1)=Q_1(\mathbf{t},\nu), 
\een
where we used that the topological $K$-ring $K(X)$ is embedded
in the Fock space $\F(X)$ via $v\mapsto \sum_{\alpha=1}^N
(v,\Phi_\alpha)\nu_{1,\alpha}= (v,\nu_1).$
Finally, if $n=0$ and $k=2$: the evaluation map $\op{ev}^{(2)}$
coincides with the diagonal embedding $\Delta: X\to X\times X$. We
have to compute $f(\nu_1,\nu_2):=\Delta_*(\O_X)$.  Let us compute the
derivatives of $f$ with respect to $\nu_1$ and $\nu_2$.
\ben
\partial_{\nu_{1,\alpha}} f= \pi^{(2)}_*(
(\Phi_\alpha\boxtimes 1)\otimes \Delta_*(\O_X) )=
\pi^{(2)}_*\Delta_* (\Phi_\alpha)=\Phi_\alpha =
\sum_{\beta=1}^N (\Phi_\alpha,\Phi_\beta) \nu_{1,\beta},
\een
where $\pi^{(2)}:X\times X\to X$ is the projection on the second
factor. Similarly,
\ben
2\partial_{\nu_{2,\alpha}} f =\op{tr}_{(12)} \pi_*(
(\Phi_\alpha\boxtimes \Phi_\alpha )\otimes \Delta_*(\O_X)) =
\op{tr}_{(12)} 
\pi_*\Delta_*(\Phi_\alpha^{\otimes 2})=
\pi_*\Delta_*\op{Tr}_{(12)}  (\Phi_\alpha^{\otimes
  2})=(\psi^2(\Phi_\alpha),1), 
\een
where $\pi:X\times X\to {\rm pt}$ is the constant map and $\pi_*$ is
the $K$-theoretic pushforward. We get
\ben
\Delta_*(\O_X)=
\frac{1}{2}\sum_{\alpha,\beta=1}^N
\chi(\Phi_\alpha\otimes \Phi_\beta)\,  \nu_{1,\alpha}\nu_{1,\beta} +
\frac{1}{2}\sum_{\alpha=1}^N
\chi(\psi^2(\Phi_\alpha))\, \nu_{2,\alpha}=Q_0(\mathbf{t},\nu).
\een

\subsection{Dilaton equation}
\label{sec:dilaton}

The following formula is the permutation-equivariant K-theoretic version of the so called {\em dilaton equation}:
\ben
\langle \mathbf{t}(L_1),\dots,\mathbf{t}(L_n), L_{n+1}-1
\rangle_{0,n+1}(\nu) = 
\Big(n-2+\sum_{b=1}^N \nu_{1,b}\partial_{\nu_{1,b}}\Big)
\langle \mathbf{t}(L_1),\dots,\mathbf{t}(L_n)
\rangle_{0,n}(\nu) - \delta_{n,0} (\psi^2(\nu_2),1),
\een
where $\psi^2(\nu_2)=\sum_{\alpha=1}^N \nu_{2,\alpha} \psi^2(\Phi_\alpha)$. Using formula \eqref{KGW-ti} we can rewrite the RHS of the above formula as follows:
\ben
(n-2)\langle \mathbf{t}(L_1),\dots,\mathbf{t}(L_n)
\rangle_{0,n}(\nu)+
\langle \mathbf{t}(L_1),\dots,\mathbf{t}(L_n),\nu_1
\rangle_{0,n+1}(\nu)- \delta_{n,0} (\psi^2(\nu_2),1).
\een
The standard argument proving the dilaton equation in cohomological Gromov--Witten theory is based on pushing forward along the forgetful map 
$\overline{\M}_{0,n+1+k}(X,d)\to \overline{\M}_{0,n+k}(X,d)$. The
K-theoretic version of the proof works in our settings too. We refer
to \cite{Giv2015Oct} for more details. We would like only to work out
the exceptional term corresponding to the case when $d=0$ and $n+k=2$,
that is, the case when the forgetful map does not exist. If $k<2$,
then $L_{n+1}$ is trivial as an $S_k$-bundle, so the exceptional term
is $0$. The only non-trivial contribution comes when $n=0$ and $k=2$.
We have to compute $f(\nu):=\op{ev}^{(2)}_*(L_1-1)=
\Delta_*(\O_X\otimes (\epsilon-1))$, where $\Delta:X\to X\times X$ is
the diagonal embedding and $\epsilon=L_1$ is the alternating
representation of $S_2$. We have  
\ben
2\partial_{\nu_{\alpha,2}} f = \op{tr}_{(12)} \pi_* \left(
\Phi_\alpha^{\boxtimes 2} \Delta_*(\O_X\otimes (\epsilon-1))\right) =
(\pi\circ\Delta)_* (\op{Tr}_{(12)} (\Phi_\alpha^{\otimes 2}\otimes (\epsilon-1)))=-2(\psi^2(\Phi_\alpha),1).
\een 
Similar computation, since $\op{Tr}_1(\epsilon-1)=0$, proves that $\partial_{\nu_{a,1}}f =0$. Therefore, 
\ben
f(\nu)=-\sum_{a=1}^N \nu_{a,2} (\psi^2(\Phi_a),1)=-(\psi^2(\nu_2),1).
\een

\subsection{WDVV equations and the $S$-matrix}
\label{sec:wdvv}

The WDVV equations say that the expression  
\beq\label{WDVV-1234}
\sum_{a,b=1}^N G^{ab}(\nu)
\left\langle \frac{\phi_1}{1-x_1L},\frac{\phi_2}{1-x_2L},\Phi_a
\right\rangle_{0,3}(\nu)
\left\langle \frac{\phi_3}{1-x_3L},\frac{\phi_4}{1-x_4L},\Phi_b
  \right\rangle_{0,3}(\nu)
\eeq
is symmetric in the pairs $(\phi_i,x_i)$ ($1\leq i\leq 4$). For
example, by exchanging $(\phi_2,x_2)$ and $(\phi_3,x_3)$, we get 
\beq\label{WDVV-1324}
\sum_{a,b=1}^N G^{ab}(\nu)
\left\langle \frac{\phi_1}{1-x_1L},\frac{\phi_3}{1-x_3L},\Phi_a\right\rangle_{0,3}(\nu)
\left\langle \frac{\phi_2}{1-x_2L},\frac{\phi_4}{1-x_4L},\Phi_b\right\rangle_{0,3}(\nu).
\eeq
The equality of \eqref{WDVV-1234} and \eqref{WDVV-1324} determines the
remaining identities.  The standard proof based on the forgetful map
$\overline{\M}_{0,4+k}(X,d)\mapsto \overline{\M}_{0,4}=\PP^1$ works
in our settings too (see \cite{Giv2015Oct}, Proposition 3). The only new feature, compared
to cohomological GW theory, is the metric \eqref{metric:G} -- see \cite{Giv2000}
for more details.

Let us recall the operator $S(\nu,q)\in \op{End}(K(X))$ depending on
the complex number $q\in \CC^*$ and the parameters
$\nu=(\nu_{r,\alpha})$ -- see formula \eqref{S-matrix}. 
\begin{proposition}\label{prop:S-matrix}
a) The following formula holds:
\ben
G(S(\nu,q_1)\Phi_a,S(\nu,q_2)\Phi_b) =
(\Phi_a,\Phi_b)+(1-q_1^{-1}q_2^{-1})
\Big\langle
\frac{\Phi_a}{1-q_1^{-1}L}, \frac{\Phi_b}{1-q_2^{-1}L}
\Big\rangle_{0,2}(\nu).
\een

b) The following formula holds: $S^T(\nu,q^{-1}) S(\nu,q)=1$ where $S^T$ is defined by the identity $G(Sa,b)=(a,S^Tb)$. 

c) The matrix $S$ satisfies the following differential equations:
\ben
(q-1)\, \partial_{\nu_{1,i}} S(\nu,q)= \Phi_i\bullet S(\nu,q),\quad 
1\leq i\leq N,
\een
where $\bullet$ is the permutation-equivariant quantum K-product
defined by \eqref{K-quantum}.
\end{proposition}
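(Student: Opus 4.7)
The plan is to prove (a) via the WDVV equation and the string equation, deduce (b) immediately from (a), and prove (c) by an analogous WDVV argument. The key algebraic observation driving both (a) and (c) is that for $\mathbf{t}(L) = \Phi/(1-q^{-1}L)$ we have $\mathbf{t}(1) = q\Phi/(q-1)$ and
\[
\frac{\mathbf{t}(L) - \mathbf{t}(1)}{L-1} = \frac{1}{q-1}\cdot\frac{\Phi}{1-q^{-1}L},
\]
so the string-equation correction produced by a $1$-insertion simply rescales the given insertion by $1/(q-1)$.

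For part (a), apply the WDVV equation to the four pairs $(\Phi_a, q_1^{-1})$, $(1, 0)$, $(1, 0)$, $(\Phi_b, q_2^{-1})$. Comparing the pairings $(1,2)(3,4)$ and $(1,4)(3,2)$ and using $\langle 1, 1, \Phi_d\rangle_{0,3} = G_{1d}$ (which follows from the string equation with $\Phi_1 = 1$) gives
\[
\sum_{c,d} G^{cd}\bigl\langle \tfrac{\Phi_a}{1-q_1^{-1}L}, 1, \Phi_c\bigr\rangle_{0,3}\bigl\langle 1, \tfrac{\Phi_b}{1-q_2^{-1}L}, \Phi_d\bigr\rangle_{0,3} = \bigl\langle \tfrac{\Phi_a}{1-q_1^{-1}L}, \tfrac{\Phi_b}{1-q_2^{-1}L}, 1\bigr\rangle_{0,3}.
\]
Applying the string equation together with the key identity above to each side collapses the LHS to $[q_1 q_2/((q_1-1)(q_2-1))]\, G(S(\nu,q_1)\Phi_a, S(\nu,q_2)\Phi_b)$ and the RHS to $[(q_1 q_2 - 1) h + q_1 q_2 (\Phi_a, \Phi_b)]/[(q_1-1)(q_2-1)]$, where $h := \langle \Phi_a/(1-q_1^{-1}L), \Phi_b/(1-q_2^{-1}L)\rangle_{0,2}$. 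Rearranging, using $(q_1-1)(q_2-1) + (q_1-1) + (q_2-1) = q_1 q_2 - 1$, yields (a) with the factor $(q_1 q_2 - 1)/(q_1 q_2) = 1 - q_1^{-1}q_2^{-1}$.

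For (b), set $q_1 = q^{-1}$ and $q_2 = q$ in (a): the factor $1 - q_1^{-1}q_2^{-1}$ vanishes, so $G(S(\nu,q^{-1})\Phi_a, S(\nu,q)\Phi_b) = (\Phi_a,\Phi_b)$. By the definition $G(Sa, b) = (a, S^T b)$, this reads $(\Phi_a, S^T(\nu,q^{-1}) S(\nu,q) \Phi_b) = (\Phi_a,\Phi_b)$ for all $a, b$, and non-degeneracy of the Euler pairing gives $S^T(\nu,q^{-1}) S(\nu,q) = 1$. For (c), pair both sides with $\Phi_b$ via $G$ and use \eqref{KGW-ti} to compute $\partial_{\nu_{1,i}}$ of both $G_{cb}$ and $G(S\Phi_a, \Phi_b)$; after invoking the symmetry of the 3-point correlator, (c) reduces to
\[
(q-1)\bigl\langle \tfrac{\Phi_a}{1-q^{-1}L}, \Phi_b, \Phi_i\bigr\rangle_{0,3} = q\bigl\langle S(\nu,q)\Phi_a, \Phi_b, \Phi_i\bigr\rangle_{0,3},
\]
which follows from WDVV applied to the pairs $(\Phi_a, q^{-1}), (1, 0), (\Phi_b, 0), (\Phi_i, 0)$: swapping positions $2$ and $3$ and using $\langle 1, \Phi_i, \Phi_d\rangle_{0,3} = G_{id}$ on one side and $\langle \Phi_a/(1-q^{-1}L), 1, \Phi_c\rangle_{0,3} = [q/(q-1)]\, G(S(\nu,q)\Phi_a, \Phi_c)$ on the other produces the identity after contraction with $G^{cd}$.

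The main obstacle is identifying the right WDVV specialization in (a) and (c); once one recognizes that including a $1$-insertion allows a factor to collapse to a Kronecker delta via $\sum_d G^{cd} G_{d1} = \delta^c_1$, the rest reduces to the string-equation identity displayed above and routine algebra. A minor subtlety is that the string equation is stated in \S\ref{sec:string} with a single test function $\mathbf{t}(L)$, but extends by multilinearity to independent test functions at distinct marked points, which is how it is used throughout the argument.
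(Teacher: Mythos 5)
Your proposal is correct and follows essentially the same route as the paper: the same WDVV specializations (two $1$-insertions at $x=0$ for part (a); one $1$-insertion together with $\Phi_i$, $\Phi_b$ at $x=0$ for part (c)), combined with the same string-equation identity $\bigl(\tfrac{\Phi}{1-q^{-1}L}-\tfrac{\Phi}{1-q^{-1}}\bigr)/(L-1)=\tfrac{1}{q-1}\tfrac{\Phi}{1-q^{-1}L}$, and (b) read off from (a) at $q_1q_2=1$. The only cosmetic difference is in (c), where the paper packages the computation as an ODE for $S^T(\nu,q^{-1})$ and then invokes (b), while you differentiate the defining relation of $S$ directly and track the $\nu$-dependence of $G$ by hand; both amount to the same identity between three-point correlators.
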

\proof
a) Let us apply the WDVV equations with $\phi_1=\phi_3=1$, $x_1=x_3=0$. For brevity, put $\phi=\phi_2$, $x=x_2$ and $\psi=\phi_4$, $y=x_4$. We get 
\ben
\Big\langle
1,\frac{\phi}{1-xL},\Phi_a
\Big\rangle_{0,3} 
\Big\langle
1,\frac{\psi}{1-yL},\Phi_b
\Big\rangle_{0,3}\, G^{ab} =
\Big\langle
1,1,\Phi_a
\Big\rangle_{0,3} 
\Big\langle
\frac{\phi}{1-xL},\frac{\psi}{1-yL},\Phi_b
\Big\rangle_{0,3}\, G^{ab},
\een
where we use Einstein's convention to sum over repeating upper and lower indexes. Using the string equation, we get $\Big\langle
1,1,\Phi_a
\Big\rangle_{0,3} = G(1,\Phi_a)$. The RHS takes the form 
\ben
\Big\langle
\frac{\phi}{1-xL},\frac{\psi}{1-yL},1
\Big\rangle_{0,3}= 
\Big(1+\frac{x}{1-x}+\frac{y}{1-y}\Big)\,
\Big\langle
\frac{\phi}{1-xL},\frac{\psi}{1-yL}
\Big\rangle_{0,2} + 
\frac{(\phi,\psi)}{(1-x)(1-y)},
\een
where we used the string equation and the identity
\ben
\frac{1}{L-1}\Big(\frac{1}{1-xL}-\frac{1}{1-x}\Big)=
\frac{x}{1-x}\,
\frac{1}{1-xL}.
\een
The RHS of the WDVV equation takes the form 
\ben
\frac{1}{(1-x)(1-y)}\Big(
(\phi,\psi)+
(1-xy) \Big\langle
\frac{\phi}{1-xL},\frac{\psi}{1-yL}
\Big\rangle_{0,2} 
\Big).
\een
Similarly, using the string equation we get that 
\ben
\Big\langle
1,\frac{\phi}{1-xL},\Phi_a
\Big\rangle_{0,3} =\frac{1}{1-x}\, 
\Big(
\Big\langle
\frac{\phi}{1-xL},\Phi_a
\Big\rangle_{0,3} +(\phi,\Phi_a)\Big)=
\frac{1}{1-x}\,
G(S(\nu,x^{-1})\phi,\Phi_a).
\een
Now it is clear that 
\ben
\frac{1}{(1-x)(1-y)} \, G(S(\nu,x^{-1})\phi,S(\nu,y^{-1})\psi)=
\frac{1}{(1-x)(1-y)}\Big(
(\phi,\psi)+
(1-xy) \Big\langle
\frac{\phi}{1-xL},\frac{\psi}{1-yL}
\Big\rangle_{0,2} 
\Big) 
\een
and this is precisely the identity that we had to prove. 

b) is a direct consequence of a). 

c) Let us apply the WDVV equations with 
\ben
\phi_1=1,\ x_1=0,\quad 
\phi_2=\phi,\ x_2=x,\quad
\phi_3=\Phi_i,\ x_3=0,\quad
\phi_4=\Phi_j,\ x_4=0.
\een 
We get 
\ben
\Big\langle
1,\frac{\phi}{1-xL},\Phi_a\Big\rangle \, 
\langle \Phi_i,\Phi_j,\Phi_b\rangle\ G^{ab} =
\langle
1,\Phi_i,\Phi_a\rangle \, 
\Big\langle \frac{\phi}{1-xL},\Phi_j,\Phi_b\Big\rangle\ G^{ab}
\een
Again, using the string equation, we get that $\langle
1,\Phi_i,\Phi_a\rangle_{0,3} =G_{ia}$ and that the RHS becomes 
\ben
\Big\langle \frac{\phi}{1-xL},\Phi_j,\Phi_i\Big\rangle_{0,3}=
\partial_{\nu_{1,i}} G(S(\nu,x^{-1})\phi,\Phi_j)=
(\phi, \partial_{\nu_{1,i}} \, S^T(\nu,x^{-1}) \Phi_j).
\een
On the other hand, the LHS (see the computation in part a)) after removing $1$ via the string equation and recalling the definition of the quantum $K$-product, becomes 
\ben
\frac{1}{1-x} G(S(\nu,x^{-1})\phi,\Phi_i\bullet \Phi_j)=
\frac{1}{1-x} (\phi, S^T(\nu,x^{-1})\Phi_i\bullet \Phi_j).
\een
Therefore, 
\ben
\partial_{\nu_{1,i}} \, S^T(\nu,x^{-1}) = \frac{1}{1-x}\, 
S^T(\nu,x^{-1})\Phi_i\bullet .
\een
Recalling b), we get $S^T(\nu,x^{-1})=S(\nu,x)^{-1}$ and the formula that we have to prove becomes a direct consequence of the above formula.
\qed

\section{Genus-0 integrable hierarchies}

The goal of this section is to prove Theorem \ref{thm:top_sol}. The
argument is essentially the same as in \cite{MT2018}, Section 4.3.  

\subsection{From descendants to ancestors}

The {\em ancestor} KGW invariants 
$\langle
\mathbf{t}(\overline{L}_1),\dots,
\mathbf{t}(\overline{L}_n)\rangle_{g,n}(\nu)$ are defined in the same
way as the descendent KGW invariants, that is, by formula
\eqref{KGW-inv}, except that we replace $L_i$ with the pullback
$\overline{L}_i=\op{ft}^*L_i$ where $\op{ft}:
\overline{\M}_{g,n+k}(X,d)\to \overline{\M}_{g,n}$ is the map that
forgets the stable map and the last $k$ marked points. According to
Givental (see \cite{Giv2015Oct}), the ancestor invariants can be be
expressed in terms of the descendent ones and the $S$-matrix. Let us
recall Givental's formula.  

Let $\CC(q)$ be the field of all rational functions on $\CC$. Using elementary fraction decomposition, we have a natural projection map: 
\beq\label{ef-projection}
[\ ]_+: \CC(q)\to \CC[q,q^{-1}] ,\quad 
f(q)\mapsto [f(q)]_+ := 
-\op{Res}_{w=0,\infty} \ \frac{f(w)dw}{w-q}.
\eeq
It is easy to check that the above residue truncates all terms in the elementary fraction decomposition of $f(q)$ that have poles in $\CC^*=\CC\setminus{\{0\}}$. The matrix $S(\nu,q)$ has entries that are formal power series in $\mathbf{t}$, $\nu$, and the Novikov variables $Q$ whose coefficients are rational functions. In particular, we can define $[S\mathbf{t}]_+(\nu,q):=[S(\nu,q)\mathbf{t}(q)]_+$.
\begin{lemma}\label{le:S-rotation}
a) The following formula holds:
\ben
[S\mathbf{t}]_+(\nu,q)= \mathbf{t}(q) + 
\sum_{\alpha,\beta=1}^N 
\Big\langle
\frac{\mathbf{t}(L)- \mathbf{t}(q)}{L-q}\, L, \Phi_\alpha
\Big\rangle_{0,2}(\nu)\, G^{\alpha\beta}(\nu)\, \Phi_\beta. 
\een
b) The following formula holds:
\ben
[S\mathbf{t}]_+(\nu,1)= 
\sum_{\alpha,\beta=1}^N 
\Big\langle1,\Phi_\alpha,\mathbf{t}(L)
\Big\rangle_{0,3}(\nu)\, G^{\alpha\beta}(\nu)\, \Phi_\beta. 
\een
\end{lemma}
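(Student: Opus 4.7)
The plan for part (a) is to apply the projection $[\,\cdot\,]_+$ to the defining identity $G(S(\nu,w)\mathbf{t}(w),\Phi_\alpha)=(\mathbf{t}(w),\Phi_\alpha)+\langle\frac{\mathbf{t}(w)}{1-w^{-1}L},\Phi_\alpha\rangle_{0,2}(\nu)$, using the elementary algebraic identity
\ben
\frac{\mathbf{t}(w)}{1-w^{-1}L}=\mathbf{t}(w)-\frac{L\mathbf{t}(L)}{L-w}+\frac{L(\mathbf{t}(L)-\mathbf{t}(w))}{L-w}
\een
(which follows from $\frac{1}{1-w^{-1}L}=1-\frac{L}{L-w}$ together with the splitting $L\mathbf{t}(w)=L\mathbf{t}(L)-L(\mathbf{t}(L)-\mathbf{t}(w))$). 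Since $\mathbf{t}(L)-\mathbf{t}(w)$ is divisible by $L-w$, the first and third summands produce correlators that are Laurent polynomials in $w$, on which $[\,\cdot\,]_+(q)$ acts as the substitution $w\mapsto q$.

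The remaining term $R(w):=\langle\frac{L\mathbf{t}(L)}{L-w},\Phi_\alpha\rangle_{0,2}(\nu)$ is a rational function of $w$ (its rationality follows from that of the other two terms and of $G(S(\nu,w)\mathbf{t}(w),\Phi_\alpha)$), and I claim $[R]_+=0$. The expansion at $w=\infty$, namely $-\sum_{j\geq 0}w^{-j-1}\langle L^{j+1}\mathbf{t}(L),\Phi_\alpha\rangle_{0,2}$, has no polynomial or constant part; on the other hand, setting $w=0$ inside the insertion gives $\frac{L\mathbf{t}(L)}{L}=\mathbf{t}(L)$, so $R$ is regular at $0$ with finite value $\langle\mathbf{t}(L),\Phi_\alpha\rangle_{0,2}$. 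A rational function regular at $0$ and vanishing at $\infty$ is a sum of principal parts at poles in $\CC^*$, and each such pole contribution is annihilated by $[\,\cdot\,]_+$ by a one-line residue calculation. Assembling the three contributions and solving for $[S\mathbf{t}]_+(\nu,q)$ via the tautology $\sum_{\alpha,\beta}G^{\alpha\beta}G(\mathbf{t}(q),\Phi_\alpha)\Phi_\beta=\mathbf{t}(q)$ recovers the stated formula.

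For part (b), the plan is to specialize $q=1$ in part (a) and then invoke the string equation. The identity $\frac{L}{L-1}=1+\frac{1}{L-1}$ rewrites $\frac{L(\mathbf{t}(L)-\mathbf{t}(1))}{L-1}=(\mathbf{t}(L)-\mathbf{t}(1))+\frac{\mathbf{t}(L)-\mathbf{t}(1)}{L-1}$, so the 2-point correlator in part (a) splits as $\langle\mathbf{t}(L),\Phi_\alpha\rangle_{0,2}+\langle\frac{\mathbf{t}(L)-\mathbf{t}(1)}{L-1},\Phi_\alpha\rangle_{0,2}-\langle\mathbf{t}(1),\Phi_\alpha\rangle_{0,2}$. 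The string equation for $\langle 1,\Phi_\alpha,\mathbf{t}(L)\rangle_{0,3}(\nu)$ (with the $\Phi_\alpha$-slot contributing nothing, since $\Phi_\alpha$ has no $L$-dependence) identifies the first two summands as $\langle 1,\Phi_\alpha,\mathbf{t}(L)\rangle_{0,3}-(\mathbf{t}(1),\Phi_\alpha)$. Combining with the remaining $-\langle\mathbf{t}(1),\Phi_\alpha\rangle_{0,2}$ produces the correction $-G(\mathbf{t}(1),\Phi_\alpha)$, which contracts under $G^{\alpha\beta}\Phi_\beta$ to $-\mathbf{t}(1)$, cancelling the explicit $\mathbf{t}(1)$ inherited from part (a) and leaving the claimed 3-point formula.

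The main obstacle will be justifying the rationality and regularity of $R(w)$ in a clean way: the behavior at $w=\infty$ is immediate from the formal series, but regularity at $w=0$ must be read off the underlying rational function rather than from the expansion at infinity, and this rationality ultimately rests on Proposition \ref{prop:S-matrix}. Once this point is handled, the remainder of the argument is purely formal manipulation with the definitions of $G$, $S$, and the string equation.
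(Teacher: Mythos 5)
Your proof is correct and follows essentially the same route as the paper: your three-term splitting of $\mathbf{t}(w)/(1-w^{-1}L)$ is precisely the partial-fraction form of the paper's single application of Cauchy's theorem, which converts $-\op{Res}_{w=0,\infty}$ into $\op{Res}_{w=L,q}$ of $\mathbf{t}(w)\,w\,dw/((w-L)(w-q))$ and produces $(\mathbf{t}(L)L-\mathbf{t}(q)q)/(L-q)=\frac{\mathbf{t}(L)-\mathbf{t}(q)}{L-q}L+\mathbf{t}(q)$, while your part (b) is verbatim the paper's string-equation manipulation. The regularity-at-$w=0$ issue you flag for $R(w)$ is not treated separately in the paper either; it is subsumed in the same appeal to the rationality of $S(\nu,q)$ (poles only in $\CC^*$, at roots of unity) that licenses treating $w=L$ as a pole in $\CC^*$ when applying the residue theorem, so your version simply makes explicit a point the paper leaves implicit.
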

\proof
a)
Recalling the definition of the $S$-matrix we get 
\ben
G([S\mathbf{t}]_+(\nu,q),\Phi_\alpha)= (\mathbf{t}(q),\Phi_\alpha) + 
\Big\langle
-\op{Res}_{w=0,\infty} \frac{\mathbf{t}(w) dw}{(1-w^{-1} L)(w-q)},\Phi_\alpha
\Big\rangle_{0,2}(\nu).
\een
Recalling Cauchy's theorem we get 
\ben
-\op{Res}_{w=0,\infty} \frac{\mathbf{t}(w) dw}{(1-w^{-1} L)(w-q)}=
\op{Res}_{w=L,q} \frac{\mathbf{t}(w) dw}{(1-w^{-1} L)(w-q)} =
\frac{\mathbf{t}(L)L-\mathbf{t}(q)q}{L-q}=
\frac{\mathbf{t}(L)-\mathbf{t}(q)}{L-q}\, L + \mathbf{t}(q).
\een
It remains only to recall the definition of $G$ and to note that
\ben
(\mathbf{t}(q),\Phi_\alpha)+
\langle \mathbf{t}(q), \Phi_\alpha\rangle_{0,2}(\nu)= 
G(\mathbf{t}(q),\Phi_\alpha).
\een
Part b) follows from a) and the string equation 
\ben
\Big\langle 1,\Phi_\alpha,\mathbf{t}(L)
\Big\rangle_{0,3}(\nu)=
\langle \mathbf{t}(L),\Phi_\alpha\rangle_{0,2}(\nu) + 
\Big\langle
\frac{\mathbf{t}(L)- \mathbf{t}(1)}{L-1},\Phi_\alpha\Big\rangle_{0,2}(\nu)+
(\mathbf{t}(1),\Phi_\alpha).\qed
\een
The relation between descendants and ancestors is given by the following formula:
\beq\label{desc-anc}
\langle 
\mathbf{t}(L_1),\dots, \mathbf{t}(L_n)\rangle_{g,n}(\nu)=
\langle
[S\mathbf{t}]_+(\nu,\overline{L}_1),\dots, 
[S\mathbf{t}]_+(\nu,\overline{L}_n)\rangle_{g,n}(\nu),
\eeq
where $g$ and $n$ must satisfy $2g-2+n>0$. 

Let us choose a formal power series $\tau(\mathbf{t},\nu_2,\nu_3,\dots)$ with coefficients in $K(X)$, such that,
\beq\label{fixed-point-eq}
[S\mathbf{t}]_+(\tau,\nu_2,\nu_3,\dots,1)=\tau.
\eeq
This is a fixed-point problem for $\tau$ and we can construct a formal solution via the iterations: 
\ben
\tau^{(0)}:=0,\quad 
\tau^{(n+1)}:= [S\mathbf{t}]_+(\tau^{(n)},\nu_2,\nu_3,\dots,1).
\een 
The sequence $\tau^{(n)}$ of formal series has a limit as $n\to
\infty$ which provides a solution to our fixed-point problem.
\begin{lemma}\label{le:unstable_qf}
  The following formula holds:
  \ben
  \langle\ \rangle_{0,0}(\nu)+\frac{1}{2}(\psi^2(\nu_2),1)+
  \langle\mathbf{t}\rangle_{0,1}(\nu)+\frac{1}{2}
  \langle \mathbf{t},\mathbf{t}\rangle_{0,2}(\nu) =
  \frac{1}{2}\langle \mathbf{t}+1-L+\nu_1, \mathbf{t}+1-L+\nu_1\rangle_{0,2}(\nu),
  \een
  where we suppressed the dependence of $\mathbf{t}(L)$ on $L$, that
  is, $\mathbf{t}=\mathbf{t}(L)$.   
\end{lemma}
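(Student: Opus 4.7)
The plan is to expand the right-hand side bilinearly. Set $u := 1 - L + \nu_1$. The $S_2$-symmetry of $\langle\cdot,\cdot\rangle_{0,2}$ gives
$$\tfrac{1}{2}\langle \mathbf{t}+u, \mathbf{t}+u\rangle_{0,2} = \tfrac{1}{2}\langle\mathbf{t},\mathbf{t}\rangle_{0,2} + \langle \mathbf{t}, u\rangle_{0,2} + \tfrac{1}{2}\langle u, u\rangle_{0,2},$$
so the identity reduces to (i) $\langle \mathbf{t}, u\rangle_{0,2} = \langle\mathbf{t}\rangle_{0,1}$ and (ii) $\tfrac{1}{2}\langle u, u\rangle_{0,2} = \langle\ \rangle_{0,0} + \tfrac{1}{2}(\psi^2(\nu_2), 1)$. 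Writing $u = \nu_1 - (L-1)$, identity (i) is immediate: the dilaton equation at $n=1$ gives $\langle \mathbf{t}, L-1\rangle_{0,2} = (E-1)\langle \mathbf{t}\rangle_{0,1}$ with $E := \sum_b \nu_{1,b}\partial_{\nu_{1,b}}$, and translation invariance \eqref{KGW-ti} gives $\langle \mathbf{t}, \nu_1\rangle_{0,2} = E\langle \mathbf{t}\rangle_{0,1}$, so the two combine to $\langle\mathbf{t}\rangle_{0,1}$.

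For (ii), abbreviate $A := \langle\ \rangle_{0,0}$, $B := \tfrac{1}{2}(\psi^2(\nu_2),1)$. I first compute $\langle u\rangle_{0,1}$: string at $n=0$ gives $\langle 1\rangle_{0,1} = A + \tfrac{1}{2}(\nu_1,\nu_1) + B$, dilaton at $n=0$ gives $\langle L-1\rangle_{0,1} = -2A + \langle \nu_1\rangle_{0,1} - 2B$, and combining with $\langle \nu_1\rangle_{0,1}$ yields $\langle u\rangle_{0,1} = 2A + 2B$. I then expand $\langle u, u\rangle_{0,2} = \langle u, 1\rangle_{0,2} - \langle u, L\rangle_{0,2} + \langle u, \nu_1\rangle_{0,2}$ and compute each term: string applied to the first slot with input $u$ (noting $(u(L)-u(1))/(L-1) = -1$ and $Q_1(u,\nu) = (\nu_1,\nu_1)$) gives $\langle u, 1\rangle_{0,2} = \langle u\rangle_{0,1} - \langle 1\rangle_{0,1} + (\nu_1,\nu_1) = A + B + \tfrac{1}{2}(\nu_1,\nu_1)$; bilinear expansion together with translation invariance gives $\langle u, \Phi_b\rangle_{0,2} = \langle \Phi_b\rangle_{0,1}$ and thus $\langle u, \nu_1\rangle_{0,2} = \langle \nu_1\rangle_{0,1}$; and a similar bilinear expansion of $\langle u, L\rangle_{0,2}$, with dilaton/string/translation applied to each $\nu$-independent basis input, gives $\langle u, L\rangle_{0,2} = \langle \nu_1\rangle_{0,1} + \tfrac{1}{2}(\nu_1,\nu_1) - A - B$. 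All pieces involving $\langle \nu_1\rangle_{0,1}$ and $(\nu_1,\nu_1)$ cancel, and we obtain $\langle u, u\rangle_{0,2} = 2A + 2B$.

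The main obstacle is careful bookkeeping, coupled with a subtle conceptual point: in the dilaton equation, the operator $E$ acts only on the \emph{explicit} $\nu$-dependence of the correlator, with $\mathbf{t}$ treated as a fixed formal input. Naively substituting $\mathbf{t}=u(\nu_1)$ into $(E-1)\langle \mathbf{t}\rangle_{0,1}$ would introduce a spurious chain-rule term proportional to $\langle \nu_1\rangle_{0,1}$ and break the identity. The safe strategy, used throughout above, is to expand every input in the $\nu$-independent basis $\{1, L, \nu_1\}$ (treating the $\nu_{1,\alpha}$ as frozen scalar coefficients) before applying dilaton, so that all dilaton applications involve only $\nu$-independent inputs.
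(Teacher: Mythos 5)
Your proof is correct and follows essentially the same route as the paper: split the identity by degree in $\mathbf{t}$, verify the linear part with the $n=1$ dilaton equation plus translation invariance \eqref{KGW-ti}, and reduce the constant part to unstable correlators via the $n=0,1$ string and dilaton equations. The only difference is cosmetic bookkeeping in the constant term --- you keep $u=1-L+\nu_1$ in one slot and use the string equation for $\langle u,1\rangle_{0,2}$, whereas the paper expands fully into $\langle L-1,L-1\rangle_{0,2}$, $\langle L-1,\nu_1\rangle_{0,2}$, $\langle \nu_1,\nu_1\rangle_{0,2}$ and applies dilaton repeatedly; your closing remark that the dilaton operator $\sum_b\nu_{1,b}\partial_{\nu_{1,b}}$ acts only on the explicit $\nu$-dependence (so one must expand in a $\nu$-independent basis before substituting $\nu$-dependent inputs) is a correct and worthwhile clarification.
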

\proof
The quadratic terms in $\mathbf{t}$ clearly match. The equality of the
linear terms is equivalent to
\ben
\langle\mathbf{t} \rangle_{0,1}(\nu) = \langle\mathbf{t} ,1-L+\nu_1\rangle_{0,2}(\nu).
\een
The above equality is precisely the dilaton equation 
$\langle\mathbf{t},L-1\rangle_{0,2}(\nu)=-
\langle\mathbf{t}\rangle_{0,1}(\nu)+
\langle\mathbf{t},\nu_1\rangle_{0,2}(\nu)
$. The equality of the
remaining terms is equivalent to
\ben
\langle\ \rangle_{0,0}(\nu) +\frac{1}{2}(\psi^2(\nu_2),1) =\frac{1}{2}
\langle 1-L+\nu_1, 1-L+\nu_1\rangle_{0,2}(\nu).
\een
The RHS can be written as
\beq\label{constant_term}
\frac{1}{2}
\langle L-1, L-1\rangle_{0,2}(\nu)-
\langle L-1,\nu_1\rangle_{0,2}(\nu)+\frac{1}{2}
\langle\nu_1,\nu_1\rangle_{0,2}.
\eeq
Using the dilaton equation we get
\ben
\langle L-1,\nu_1\rangle_{0,2}(\nu)=-
\langle\nu_1\rangle_{0,1}(\nu) + 
\langle \nu_1,\nu_1\rangle_{0,2}(\nu)
\een
and
\ben
\langle L-1, L-1\rangle_{0,2}(\nu) =
-\langle L-1\rangle_{0,1}(\nu) + \langle L-1, \nu_1\rangle_{0,2}(\nu)=
2\langle\ \rangle_{0,0}(\nu) -
2\langle \nu_1 \rangle_{0,1}(\nu) + (\psi^2(\nu_2),1)+
\langle \nu_1, \nu_1\rangle_{0,2}(\nu).
\een
Substituting the above two formulas in \eqref{constant_term} we get
the identity that we had to prove.\qed
\begin{proposition}\label{prop:reconstr}
Let
\ben
\F(\mathbf{t}):= \frac{1}{2}(\psi^2(\nu_2),1)  +
\sum_{n=0}^\infty \frac{1}{n!}
\langle \mathbf{t}(L),\dots,\mathbf{t}(L)\rangle_{0,n}(0,\nu_2,\nu_3,\dots)
\een
and let $\tau(\mathbf{t},\nu_2,\nu_3,\dots)$ be a solution to the
fixed-point problem \eqref{fixed-point-eq}. Then the following
formulas hold:
\ben
  \F(\mathbf{t})  = \frac{1}{2}
                   \langle \mathbf{t}(L)+1-L,
                   \mathbf{t}(L)+1-L\rangle_{0,2}(\tau,\nu_2,\nu_3,\dots),
\een
\ben
  \partial_{t_{m,\alpha}} \F(\mathbf{t}) = \frac{1}{2}
                   \langle \Phi_\alpha (L-1)^m,
                   \mathbf{t}(L)+1-L\rangle_{0,2}(\tau,\nu_2,\nu_3,\dots),
\een
and
\ben
  \partial_{t_{m,\alpha}}\partial_{t_{n,\beta}} \F(\mathbf{t}) = \frac{1}{2}
                   \langle \Phi_\alpha (L-1)^m, \Phi_\beta (L-1)^n
                   \rangle_{0,2}(\tau,\nu_2,\nu_3,\dots).
\een
\end{proposition}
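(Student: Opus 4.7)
The plan is to first establish the formula for $\F(\mathbf{t})$ itself, then deduce the two derivative formulas by direct differentiation. The main tools are the translation symmetry \eqref{KGW-ti}, the descendant-to-ancestor relation \eqref{desc-anc}, a K-theoretic dimension vanishing on $\overline{\M}_{0,n}$, and Lemma \ref{le:unstable_qf}.

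First I would use the translation symmetry, namely the identity $\partial_{t_{0,\alpha}}=\partial_{\nu_{1,\alpha}}$ for the genus-0 descendant potential, to shift the base point from $\nu_1=0$ to $\nu_1=\tau$. This yields
\[
\F(\mathbf{t})=\tfrac{1}{2}(\psi^2(\nu_2),1)+\sum_{n=0}^\infty \tfrac{1}{n!}\langle \mathbf{t}(L)-\tau,\dots,\mathbf{t}(L)-\tau\rangle_{0,n}(\tau,\nu_2,\dots),
\]
where $\tau$ is the fixed-point solution of \eqref{fixed-point-eq}. For the terms with $n\geq 3$ I would then apply \eqref{desc-anc}. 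A short computation using Lemma \ref{le:S-rotation}(b) together with the string equation gives $[S\tau]_+(\tau,1)=\tau$, and combined with the fixed-point identity $[S\mathbf{t}]_+(\tau,1)=\tau$ this produces $[S(\mathbf{t}-\tau)]_+(\tau,1)=0$. Hence every ancestor input is divisible by $(\overline L-1)$.

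The crucial dimensional input is that $\prod_{i=1}^n(L_i-1)=0$ in $K(\overline{\M}_{0,n})\otimes\CC$ for $n\geq 3$: each factor satisfies $\op{ch}(L_i-1)=\psi_i+O(\psi_i^2)$, so the product has cohomological degree at least $n>n-3=\dim\overline{\M}_{0,n}$, and the Chern character is injective on $K\otimes\CC$ for a smooth projective variety. Pulling back via the forgetful map kills all $n\geq 3$ ancestor correlators, leaving only $n=0,1,2$. These three surviving terms are precisely the LHS of Lemma \ref{le:unstable_qf} with $\nu_1$ replaced by $\tau$ and $\mathbf{t}$ replaced by $\mathbf{t}-\tau$, and the lemma immediately yields the first formula of the proposition.

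For the derivative formulas I would differentiate the shifted expression term by term. The chain rule produces $\partial_{t_{m,\alpha}}\tau$-contributions from both the explicit $\mathbf{t}-\tau$ and the implicit $\nu_1=\tau$ in each correlator; these cancel after invoking the translation identity $\partial_{t_{0,\gamma}}=\partial_{\nu_{1,\gamma}}$ once more together with the symmetry of correlators under permutation of arguments. What remains is
\[
\partial_{t_{m,\alpha}}\F=\sum_{n\geq 1}\tfrac{1}{(n-1)!}\langle \Phi_\alpha(L-1)^m,(\mathbf{t}-\tau)^{n-1}\rangle_{0,n}(\tau,\nu_2,\dots),
\]
and the same dimensional vanishing kills $n\geq 3$. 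The dilaton equation rewrites the $n=1$ term as a $2$-point correlator with an input $1-L$, and combining with the $n=2$ term yields the formula for $\partial_{t_{m,\alpha}}\F$. Repeating the procedure produces the second-derivative formula. The main obstacle is establishing the K-theoretic vanishing of $\prod(L_i-1)$ on $\overline{\M}_{0,n}$ at the level of the full K-theory class rather than merely its leading Chern-character term; this is handled cleanly by the injectivity of $\op{ch}$ on rational K-theory of a smooth projective variety. The chain-rule cancellation in the derivative step requires careful bookkeeping but, once the right indexing is in place, follows formally from translation invariance.
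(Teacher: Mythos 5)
Your argument is correct and is essentially the paper's own proof of the first identity: shift the base point by translation invariance, pass to ancestors via \eqref{desc-anc}, observe that the fixed-point condition makes every ancestor input divisible by $\overline{L}_i-1$ so that the $n\geq 3$ terms die on the $(n-3)$-dimensional $\overline{\M}_{0,n}$, and finish with Lemma \ref{le:unstable_qf}; the paper leaves the two derivative formulas to the reader, and your sketch (differentiate the $\nu_1$-shifted sum, kill $n\geq 3$ by the same vanishing, absorb the $n=1$ term via the dilaton equation) is the intended completion. One remark: carried out as you describe, the computation produces $\partial_{t_{m,\alpha}}\F=\langle \Phi_\alpha(L-1)^m,\mathbf{t}(L)+1-L\rangle_{0,2}(\tau)$ and $\partial_{t_{m,\alpha}}\partial_{t_{n,\beta}}\F=\langle \Phi_\alpha(L-1)^m,\Phi_\beta(L-1)^n\rangle_{0,2}(\tau)$ \emph{without} the prefactor $\tfrac{1}{2}$, which is how the proposition is actually invoked in the proof of Theorem \ref{thm:top_sol} (e.g.\ in \eqref{J-equation} and \eqref{J-deriv}); so the $\tfrac{1}{2}$ in the last two displayed formulas of the statement appears to be a typo rather than a defect of your proof.
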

\proof
Let us prove the first formula only. The argument for the remaining
two is similar. Using the translation invariance \eqref{KGW-ti}, we
can rewrite $\F$ as follows:
\beq\label{shifted-F}
\F(\mathbf{t}):= \frac{1}{2}(\psi^2(\nu_2),1)  +
\sum_{n=0}^\infty \frac{1}{n!}
\langle \mathbf{t}-\nu_1,\dots,\mathbf{t}-\nu_1\rangle_{0,n}(\nu_1,\nu_2,\nu_3,\dots)
\eeq
where $\nu_1$ is an arbitrary formal parameter and we wrote
$\mathbf{t}$ instead of $\mathbf{t}(L)$ in order to make the notation
less cumbersome. Let us apply the formula expressing descendants in
terms of ancestors \eqref{desc-anc}. We get 
\ben
\langle
\mathbf{t}-\nu_1,\dots,\mathbf{t}-\nu_1\rangle_{0,n}(\nu_1,\nu_2,\nu_3,\dots)=
\langle
[S\mathbf{t}]_+(\nu,\overline{L}_1)-\nu_1,\dots,
[S\mathbf{t}]_+(\nu,\overline{L}_n)-\nu_1
\rangle_{0,n}(\nu_1,\nu_2,\nu_3,\dots),
\een
where $n\geq 3$ and we used Lemma \ref{le:S-rotation} to compute
$[S\nu_1]_+=\nu_1$. Let us specialize
$\nu_1=\tau(\mathbf{t},\nu_2,\nu_3,\dots)$. By definition
$[S\mathbf{t}]_+(\nu,1)-\nu_1=0$ which implies that
$[S\mathbf{t}]_+(\nu,\overline{L}_i)-\nu_1$ is proportional to
$\overline{L}_i-1=\op{ft}^*(L_i-1)$ for all $1\leq i\leq n$. However,
the product $(L_1-1)\cdots (L_n-1)=0$ on $\overline{\M}_{0,n}$ because
$\overline{\M}_{0,n}$ is an $(n-3)$-dimensional manifold.  We conclude
that after the substitution $\nu_1=\tau(\mathbf{t},\nu_2,\nu_3,\dots)$
all terms in the sum in \eqref{shifted-F} with $n\geq 3$ must
vanish. The sum of the remaining terms, according to Lemma
\ref{le:unstable_qf}, must be
\ben
\frac{1}{2}\langle
\mathbf{t}-\nu_1 + 1-L +\nu_1,
\mathbf{t}-\nu_1 + 1-L +\nu_1\rangle_{0,2}(\nu)=
\frac{1}{2}\langle
\mathbf{t} + 1-L,
\mathbf{t} + 1-L \rangle_{0,2}(\nu).\qed
\een
\subsection{Proof of Theorem \ref{thm:top_sol}}

Let $\tau(\mathbf{t},\nu_2,\nu_3,\dots)$ be the solution to the
fixed-point problem \eqref{fixed-point-eq}. We will prove Theorem
\ref{thm:top_sol} by showing that
$v(\mathbf{t})=\tau(\mathbf{t},\nu_2,\nu_3,\dots)$ and that
$\tau(\mathbf{t},\nu_2,\nu_3,\dots)$ is a solution to the integrable
hierarchy \eqref{KGW:ph}. Let us denote the partial derivative
$\partial_{t_{m,\alpha}}$ by $\partial_{m,\alpha}$.

Let us prove that $v(\mathbf{t})=\tau(\mathbf{t},\nu_2,\nu_3,\dots)$,
that is,
\beq\label{J-equation}
J(\tau,0)=1+\sum_{\alpha=1}^N
\partial_{{0,\alpha}} \partial_{{0,1}} \F(\mathbf{t}) \Phi^\alpha,
\eeq
where $\F(\mathbf{t})$ is the same as in Proposition \ref{prop:reconstr} and
recall that $\Phi_1=1$. According to Proposition \ref{prop:reconstr},
the second order partial derivative in the above formula is
\ben
\langle 1, \Phi_\alpha\rangle_{0,2}(\tau,\nu_2,\nu_3,\dots)=
\langle \Phi_\alpha\rangle_{0,1}(\tau,\nu_2,\nu_3,\dots) + (\Phi_\alpha,\tau),
\een
where we used the string equation. The formula that we want to prove
follows.

Let us prove that $\tau$ is a solution to
\eqref{KGW:ph}. Differentiating \eqref{J-equation} with respect to
$t_{n,\beta}$ we get 
\beq\label{J-deriv}
\partial_{n,\beta} J(\tau,0)= \partial_{0,1} \sum_{\alpha=1}^N
\langle \Phi_\alpha, \Phi_\beta (L-1)^n\rangle_{0,2}(\tau)\, \Phi^\alpha,
\eeq
where we suppressed the dependence of the two-point correlator on
$\nu_2,\nu_3,\dots$ and we used Proposition \ref{prop:reconstr}. Using
the projection map \eqref{ef-projection} we transform the
two-point correlator  $\partial_{0,1} 
\langle \Phi_\alpha, \Phi_\beta (L-1)^n\rangle_{0,2}(\tau)$ into
\ben
-\partial_{0,1} \op{Res}_{q=0,\infty} dq (q-1)^n
\Big\langle
\Phi_\alpha, \frac{\Phi_\beta}{q-L}
\Big\rangle (\tau)=
-\partial_{0,1} \op{Res}_{q=0,\infty} dq  q^{-1} (q-1)^n
G(S(\tau,q)\Phi_\beta,\Phi_\alpha).
\een
Note that $G(a,S(\tau,0)b)= (a,b)$, or equivalently
$G(a,b)=(a,S(\tau,0)^{-1}b)$. Let us change
$G(S(\tau,q)\Phi_\beta,\Phi_\alpha)$ into
$(S(\tau,0)^{-1}S(\tau,q)\Phi_\beta,\Phi_\alpha)$ and substitute the
result in formula \eqref{J-deriv}. We get
\beq\label{J-deriv_2}
\partial_{n,\beta} J(\tau,0) = -\partial_{0,1} \op{Res}_{q=0,\infty}
dq  q^{-1} (q-1)^n S(\tau,0)^{-1} S(\tau,q)\Phi_\beta. 
\eeq
Using that $S(\nu,q)$ is a solution to the quantum differential
equations and that $J(\nu,q)=(1-q)S(\nu,q)^{-1} 1$ we get
\ben
\partial_{n,\beta} J(\tau,0) = -S(\tau,0)^{-1} (-\partial_{n,\beta}\tau
\bullet S(\tau,0) )  S(\tau,0)^{-1}1 = S(\tau,0)^{-1} \partial_{n,\beta}\tau.
\een
Similarly, we find that $\partial_{0,1} S(\tau,0)^{-1} S(\tau,q) $ is 
\ben
S(\tau,0)^{-1}
\partial_{0,1}\tau\bullet S(\tau,q) + S(\tau,0)^{-1} (q-1)^{-1}
\partial_{0,1}\tau\bullet S(\tau,q) =\frac{q}{q-1} S(\tau,0)^{-1}
\partial_{0,1}\tau \bullet S(\tau,q).
\een
Substituting these formulas in \eqref{J-deriv_2} and cancelling
$S(\tau,0)^{-1}$ we get
\ben
\partial_{n,\beta} \tau =
-\op{Res}_{q=\infty}
dq (q-1)^{n-1} \partial_{0,1}\tau \bullet S(\tau,q) \Phi_\beta,
\een
where the residue at $q=0$ was dropped because $S(\tau,q)$ does not
have a pole at $q=0$. Comparing with \eqref{KGW:ph}, we get that
$\tau(\mathbf{t},\nu_2,\nu_3,\dots)$ is a solution to the principal
hierarchy. \qed

\section{The genus-0 K-theoretic Gromov--Witten invariants of the point}

The goal of this section is to prove Theorem \ref{thm:corr-0}.

\subsection{Genus-0 moduli spaces of curves}
Note that the vector space $V_n=\{a\in \CC^n\ |\ a_1+\cdots+a_n=0\}$ is naturally a $S_n$-space where $S_n$ acts by permuting the coordinates
$\sigma\cdot
(a_1,\dots,a_n):=(a_{\sigma^{-1}(1)},\dots,a_{\sigma^{-1}(n)})$.
We have an isomorphism $H^0(\overline{\M}_{0,n+1},L_{n+1})\cong V_n$
defined as follows. Given $a\in V_n$ and $(C,p_1,\dots,p_{n+1})\in \overline{\M}_{0,n+1}$ we 
construct a cotangent vector $\varphi_a(p_{n+1})\in T^*_{p_{n+1}}C$
where the meromorphic 1-form $\varphi_a$ on $C$ is uniquely
determined by the following conditions:
\begin{enumerate}
\item[(i)] $\varphi_a$ is holomorphic except for at most first order
  poles at the nodes and the first $n$ marked points of $C$.
\item[(ii)]
  If $q\in C$ is a node and $C'$ and  $C''$ are the two irreducible
  components of $C$ at $q$, then
  $\op{Res}_{p'=q}(\varphi_a(p'))+\op{Res}_{p''=q}(\varphi_a(p''))=0$
  where $p'$ and $p''$ are local coordinates on respectively $C'$ and
  $C''$ near $q$. 
\item[(iii)]
  $\op{Res}_{p=p_i}\varphi_a(p)=a_i$. 
\end{enumerate}
Holomorphic forms satisfying conditions (i) and (ii) are by
definitions sections of the sheaf $\omega_C(p_1+\cdots+p_n)$ where
$\omega_C$ is the dualizing sheaf of $C$. Condition (iii), uniquely
determines the meromorphic 1-form $\varphi_a$  because $C$ is rational
and hence the only obstruction to the existence of $\varphi_a$ is that the
sum of the residues at all poles is $0$, that is, $a\in V_n$. This
description also shows that $L_{n+1}\cong \omega_C(p_1+\cdots+p_n)$
which is a standard fact.

Recall that every line bundle, provided it has sufficiently many global sections, determines a holomorphic map to a projective space (see \cite{GH1994},
Chapter I, Section 4). In our case, 
the line bundle $L_{n+1}$ determines a holomorphic map
$\pi: \overline{\M}_{0,n+1}\to \PP(V_n^*)$, such that, $\pi^*\O(1)=L_{n+1}$. More precisely, we have 
\beq\label{pi-blowdown}
\pi(C,p_1,\dots,p_{n+1})(a):= \varphi_a(p_{n+1})/dw(p_{n+1}),
\eeq
where $w$ is a local coordinate on $C$ at $p_{n+1}$. The class of $\pi(C,p_1,\dots,p_{n+1})$ in $\PP(V_n^*)$ is independent of the choice of local coordinate $w$.
\begin{lemma}
Suppose that the action $S_n$ on $\PP(V_n^*)$ is induced from $(\sigma\xi)(a):=\xi(\sigma^{-1} a)$ where $\xi\in V_n^*$ and $a\in V_n$. The map \eqref{pi-blowdown} is $S_n$-equivariant, that is, 
$\pi (\sigma (C,p_1,\dots,p_n))=\sigma \pi(C,p_1,\dots,p_n),$ 
where the action of $S_n$ on $\overline{\M}_{0,n+1}$ is defined by  
$\sigma(C,p_1,\dots,p_{n+1}):= 
(C,p_{\sigma^{-1}(1)},\dots, p_{\sigma^{-1}(n)},p_{n+1} )$.  
\end{lemma}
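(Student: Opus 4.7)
The plan is to unwind the definitions on both sides and invoke the uniqueness of the meromorphic 1-form $\varphi_a$ satisfying conditions (i)--(iii).

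First I would fix $(C, p_1,\dots,p_{n+1}) \in \overline{\M}_{0,n+1}$ and $\sigma \in S_n$, and set $(C',p'_1,\dots,p'_{n+1}) := \sigma(C,p_1,\dots,p_{n+1})$, so that $C'=C$, $p'_i=p_{\sigma^{-1}(i)}$ for $1\leq i\leq n$, and $p'_{n+1}=p_{n+1}$. For $a=(a_1,\dots,a_n)\in V_n$, let $\varphi'_a$ denote the form on $C'$ attached to the permuted curve. By condition (iii), $\op{Res}_{p=p'_i}\varphi'_a = a_i$, which upon substituting $p'_i=p_{\sigma^{-1}(i)}$ and setting $j=\sigma^{-1}(i)$ becomes $\op{Res}_{p=p_j}\varphi'_a = a_{\sigma(j)}$.

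Next I would compute $(\sigma^{-1} a)_j$ using the stated $S_n$-action on $V_n$: since $\sigma\cdot (a_1,\dots,a_n)=(a_{\sigma^{-1}(1)},\dots,a_{\sigma^{-1}(n)})$, we have $(\sigma^{-1} a)_j = a_{\sigma(j)}$. Therefore $\varphi'_a$ and $\varphi_{\sigma^{-1} a}$ are both meromorphic 1-forms on $C$ satisfying conditions (i)--(ii) (which involve only the underlying curve, not the labelling) and having the same residues $a_{\sigma(j)}$ at $p_j$ for $j=1,\dots,n$. By the uniqueness clause noted after (iii), $\varphi'_a = \varphi_{\sigma^{-1} a}$.

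Finally, since $p'_{n+1}=p_{n+1}$ and we may use the same local coordinate $w$ at $p_{n+1}$, the defining formula \eqref{pi-blowdown} gives
\[
\pi\bigl(\sigma(C,p_1,\dots,p_{n+1})\bigr)(a) = \varphi'_a(p'_{n+1})/dw(p'_{n+1}) = \varphi_{\sigma^{-1}a}(p_{n+1})/dw(p_{n+1}) = \pi(C,p_1,\dots,p_{n+1})(\sigma^{-1} a),
\]
which by definition of the $S_n$-action on $V_n^*$ equals $\bigl(\sigma\,\pi(C,p_1,\dots,p_{n+1})\bigr)(a)$. Since $a\in V_n$ was arbitrary, this is the required equivariance in $\PP(V_n^*)$.

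There is no real obstacle here; the only subtle point is to keep the two opposite-looking conventions straight --- the $S_n$-action on marked points uses $\sigma^{-1}$ to relabel, while the induced action on $V_n^*$ is pulled back through $\sigma^{-1}$ as well, and these two $\sigma^{-1}$'s cancel in exactly the way needed to make $\pi$ equivariant. Once one observes that $\varphi'_a=\varphi_{\sigma^{-1}a}$ by the residue uniqueness, the rest is a direct substitution.
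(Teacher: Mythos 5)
Your proof is correct and follows essentially the same route as the paper: identify the form attached to the permuted curve with $\varphi_{\sigma^{-1}a}$ by matching residues and invoking uniqueness, then read off the equivariance from the defining formula for $\pi$. The only difference is that you spell out the final substitution at $p_{n+1}$ explicitly, which the paper leaves as ``the equivariance of $\pi$ is clear.''
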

\proof
Let $\psi_a$ be the meromorphic form on $C$ corresponding to $(C,p_{\sigma^{-1}(1)},\dots, p_{\sigma^{-1}(n)},p_{n+1} )$. We have $\op{Res}_{p=p_{\sigma^{-1}(i)}}\varphi_a(p) = a_i$ or equivalently 
\ben
\op{Res}_{p=p_{j}}\psi_a(p) = a_{\sigma(j)}=(\sigma^{-1}a)_j=
\op{Res}_{p=p_j} \varphi_{\sigma^{-1}a}(p).
\een
Therefore, $\psi_a=\varphi_{\sigma^{-1}a}$ and the equivariance of $\pi$ is clear.
\qed

The map $\pi$ can be described more explicitly as follows. Suppose that we have a point $(C,p_1,\dots,p_{n+1})\in \overline{\M}_{0,n+1}$. Let $C_0\cong \PP^1$, we call it the {\em central component}, be the irreducible component of $C$ that carries the marked point $p_{n+1}$. Let us denote by $q_1,\dots q_l$ the nodal points of $C$ that are on $C_0$. Put $I:=\{i\in \{1,2,\dots,n\}\ |\ p_i\in C_0\}$. Note that by removing $C_0$ from $C$ we get a curve consisting of $l$ connected components $C_1,\dots,C_l$. Therefore, the remaining marked points $\{1,2,\dots,n\}\setminus I$ split into $l$ pairwise disjoint groups $J_1,J_2,\dots,J_l$, that is, $J_k=\{j\ |\ p_j\in C_k\}$. Since the isomorphism $C_0\cong \PP^1$ is determined up to the action of $\op{PSL}(2,\CC)$, we can arrange that $p_{n+1}=\infty$ and 
$|J_1| q_1+\dots + |J_l| q_l +\sum_{i\in I} p_i=0$ where $|J_k|$ is the number of elements in $J_k$. Note that
\ben
\varphi_a|_{C_0}= \sum_{k=1}^l \sum_{j\in J_k} \frac{a_j dz}{z-q_k} +
\sum_{i\in I} \frac{a_i dz}{z-p_i}.
\een 
Let $w=1/z$ be the coordinate near $\infty=p_{n+1}$. We have 
\ben
\frac{\varphi_a(w)}{dw}= -\sum_{k=1}^l \sum_{j\in J_k}
\frac{a_jq_k}{1-w q_k} -
\sum_{i\in I} \frac{a_i p_i}{1-w p_i}.
\een
Restricting $w=0$ we get that 
\ben
\pi(C,p_1,\dots,p_{n+1})(a)= -\sum_{k=1}^l \sum_{j\in J_k}
a_jq_k  -
\sum_{i\in I} a_i p_i.
\een
On the other hand, the projective space $\PP(V_n^*)\cong \{\xi\in \PP^{n-1}\ |\ \xi_1+\cdots +\xi_n=0\}$ where $(\xi_1:\xi_2:\cdots:\xi_n)$ are the homogeneous coordinates on $\PP^{n-1}$ and the isomorphism is given by mapping $(\xi_1:\xi_2:\cdots:\xi_n)$ to the linear functional $a\mapsto \xi_1 a_1+\cdots +\xi_n a_n$. Under this identification, the action of $S_n$ on $\PP^{n-1}$ is given by $\sigma (\xi_1:\xi_2:\cdots:\xi_n)=(\xi_{\sigma^{-1}(1)} :\xi_{\sigma^{-1}(2)}:\cdots:\xi_{\sigma^{-1}(n)} )$ and 
\beq\label{pi-blowdown_2}
\pi(C,p_1,\dots,p_{n+1})= (p_i(i\in I): \underbrace{q_k:\cdots :q_k}_{|J_k|}(1\leq k\leq l) ),
\eeq
where the homogeneous coordinates on the RHS of the above formula
should be ordered as follows: $p_i$ ($i\in I$) is placed on the $i$-th
position and a copy of $q_k$ is placed on the $j$-th position for every
$j\in J_k$.  
Using formula \eqref{pi-blowdown_2}, let us describe the fibers of $\pi$. Given $\xi=(\xi_1:\cdots:\xi_n)$ we define an equivalence relation in $\{1,2,\dots,n\}$ by saying that $i\sim j$ iff $\xi_i=\xi_j$. This equivalence relation splits $\{1,2,\dots,n\}$ into several equivalence classes. The ones consisting of a single element we denote by $\{i\}$ ($i\in I$). The remaining ones, consisting of at least two elements, we denote be $J_1,\dots,J_l$. If $(C,p_1,\dots,p_{n+1})$ is in the fiber $\pi^{-1}(\xi)$ then the central component $C_0$ must be $\PP^1$ with marked points $p_i$ ($i\in I$) and nodal points $q_k=\xi_j$ ($1\leq k\leq l$) where for each $k$ we choose one $j\in J_k$. In other words, the isomorphism class of the central component $C_0$ is uniquely fixed. The remaining $l$ components can be fixed in $\overline{\M}_{0,|J_1|+1} \times \cdots \times \overline{\M}_{0,|J_l|+1}$ ways, that is, the fiber 
\beq\label{pi:fibers}
\pi^{-1}(\xi)\cong 
\overline{\M}_{0,|J_1|+1} \times \cdots \times \overline{\M}_{0,|J_l|+1}.
\eeq
The map $\pi$ is a birational equivalence. More precisely, let
$\Sigma\subset \PP^{n-2}$ be the analytic subvariety consisting of points
$\xi=(\xi_1:\cdots:\xi_n)$, such that, there exist 3 pairwise
different $i,j$, and $k$, such that, $\xi_i=\xi_j=\xi_k$. This is
clearly a complex co-dimension 2 algebraic subvariety and $\pi$ induces
an isomorphism $\pi^{-1}(\PP^{n-2}\setminus{\Sigma})\cong
\PP^{n-2}\setminus{\Sigma}$. Since $\PP^{n-2}$ is non-singular, the map
$\pi$ is in particular a rational resolution. The general theory of
rational singularities (see \cite{KM1998}, Theorem 5.10) yields  the
following lemma. 
\begin{lemma}\label{le:rat_sing}
The higher direct images of $\pi$ vanish, that is, $R^i\pi_* \O=0$ for $i>0$.
\end{lemma}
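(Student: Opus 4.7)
The plan is to reduce the statement to the standard theorem cited from Koll\'ar--Mori (Theorem 5.10), which says that a proper birational morphism from a smooth variety to a variety with rational singularities is a rational resolution, so in particular all higher direct images of $\O$ vanish. The three ingredients I need are: (a) the source $\overline{\M}_{0,n+1}$ is smooth projective; (b) the target $\PP(V_n^*) \cong \PP^{n-2}$ is smooth, hence trivially has rational singularities; (c) $\pi$ is proper and birational.

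First I would recall that $\overline{\M}_{0,n+1}$ is smooth and projective---this is a classical fact (e.g. Keel's construction as an iterated blow-up of a product of $\PP^1$'s). The target is the hyperplane $\{\xi_1+\cdots+\xi_n=0\}\subset \PP^{n-1}$, hence isomorphic to $\PP^{n-2}$ and smooth. Next I would verify the two properties of $\pi$: properness is immediate because the source is projective, and birationality is already established in the text, since $\pi$ was shown to restrict to an isomorphism over $\PP^{n-2}\setminus \Sigma$ with $\Sigma$ of complex codimension $\geq 2$. Hence $\pi$ is a resolution of $\PP^{n-2}$.

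With these ingredients in place, the cited theorem applies and gives $R\pi_*\O = \O$; in particular $R^i\pi_*\O = 0$ for all $i>0$, which is what we want. The only thing that requires any attention is lining up the algebro-geometric hypotheses precisely; no genuine obstacle arises.

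An alternative, more hands-on route would use the explicit description \eqref{pi:fibers} of the fibers as products $\overline{\M}_{0,|J_1|+1}\times\cdots\times\overline{\M}_{0,|J_l|+1}$ of smooth projective rational varieties, each with $H^i(\O)=0$ for $i>0$, and attempt to stratify $\PP^{n-2}$ by the combinatorial type of the fiber and apply cohomology-and-base-change. This would avoid citing KM 5.10, but the stratification is more subtle than a flat family and extra care would be needed; the cleanest and most economical route is the one above.
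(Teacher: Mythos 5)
Your proposal is correct and follows essentially the same route as the paper: the text establishes that $\pi$ restricts to an isomorphism over $\PP^{n-2}\setminus\Sigma$ with $\Sigma$ of codimension $2$, so $\pi$ is a resolution of the smooth (hence rational-singular) variety $\PP^{n-2}$, and the vanishing $R^i\pi_*\O=0$ for $i>0$ follows from the cited Koll\'ar--Mori theorem. The only additions you make are the explicit verifications of smoothness/properness, which the paper leaves implicit.
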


\subsection{J-function}
The $J$-function of a point was computed by Givental (see the Theorem in
\cite{Giv2015Aug}). We have the following formula:
\ben
J(\nu,q) =
1-q+\nu_1+\Big\langle \frac{1}{1-qL}\Big\rangle_{0,1}(\nu)= 
(1-q) \exp\Big(\sum_{k=1}^\infty \frac{\nu_k}{k(1-q^k)}\Big). 
\een
Let us outline a proof of the above formula which is slightly different than the one given in \cite{Giv2015Aug}.
Let $\op{ev}: \overline{\M}_{0,n+1}\to \op{pt}$ be the contraction map. Using Lemma \ref{le:rat_sing} we compute $\pi_* L_{n+1}^k = \pi_* \pi^* \O(k)= \O(k)\otimes \pi_*\O= \O(k)$. Therefore, 
\ben
\op{ev}_*(L_{n+1}^k)= H^0(\PP(V_n^*), \O(k))= \op{Sym}^k(V_n),\quad 
\op{Sym}^0(V_n):=\CC.
\een
The $J$-function takes the form
\ben
J(\nu,q)=1-q+\nu_1 + \sum_{n=2}^\infty \sum_{k=0}^\infty 
\op{Sym}^k(V_n) q^k.
\een
On the other hand, we have $\CC^n=V_n \oplus \CC$ as $S_n$-modules where $\CC$ is the trivial $S_n$-module. Note that $\op{Sym}^k(\CC^n)= \sum_{l=0}^k \op{Sym}^l(V_n)$ for $n\geq 1$ and $k\geq 0$ where we assume that $V_1=0$. Therefore, 
\ben
\sum_{n=1}^\infty \sum_{k=0}^\infty 
\op{Sym}^k(\CC^n) q^k= 
\sum_{k=0}^\infty \nu_1 q^k + 
\sum_{n=2}^\infty\sum_{k=0}^\infty 
\sum_{l=0}^k \op{Sym}^l(V_n) q^k=
\frac{\nu_1}{1- q} + 
\sum_{n=2}^\infty\sum_{l=0}^\infty 
\op{Sym}^l(V_n) q^l
\sum_{l=k}^\infty q^{k-l} ,
\een
where in the last equality we exchanged the summations over $k$ and $l$. Using the above formula we get that the $J$-function is
\ben
J(\nu,q)=(1-q)\Big(1 + \sum_{n=1}^\infty \sum_{k=0}^\infty 
\op{Sym}^k(\CC^n) q^k\Big).
\een
It remains only to prove that 
\ben
1 + \sum_{n=1}^\infty \sum_{k=0}^\infty 
\op{Sym}^k(\CC^n) q^k= \exp\Big(
\sum_{r=1}^\infty \frac{\nu_r}{r(1-q^r)}\Big).
\een
Let us denote the LHS of the above equality by $f(\nu,q)$. Let us compute the derivatives $r\partial_{\nu_r} f(\nu,q)$ by using the Heisenberg algebra structure of the Fock space. Recalling Theorem \ref{thm:Heisenberg}, we get 
\ben
r\partial_{\nu_r} \op{Sym}^k(\CC^n) q^k= 
\op{tr}_{(1,2,\dots,r)} \op{Res}^{S_n}_{S_r\times S_{n-r}}\, 
\op{Sym}^k(\CC^n) q^k=
\bigoplus_{a+b=k}
\op{tr}_{(1,2,\dots,r)}\op{Sym}^a(\CC^r) q^a \otimes 
\op{Sym}^{b}(\CC^{n-r}) q^b,
\een
where we take $n\geq r$, otherwise the derivative is $0$ for degree reasons. 
In order to compute this trace, let us identify $\op{Sym}^a(\CC^r)$ with the space of homogeneous polynomials in $\CC[x_1,\dots,x_r]$ of degree $a$. This space has a basis consisting of monomials $x_1^{d_1}x_2^{d_2}\cdots x_r^{d_r}$, such that, $d_1+d_2+\cdots +d_r=a$. Since the permutation $(1,2,\dots,r)$ permutes these monomials, the trace is equal to the number of monomials that remain fixed. However, there is at most one monomial fixed by  $(1,2,\dots,r)$, that is, the monomial with $d_1=\cdots=d_r$. Such monomial exists only if $r$ divides $a$. In other words, the trace in the above formula is $1$ if $r|a$ and $0$ otherwise. Summing over all $k$, we get 
\ben
r\partial_{\nu_r} \sum_{k=0}^\infty \op{Sym}^k(\CC^n) q^k= 
\frac{1}{1-q^r} \sum_{b=0}^\infty \op{Sym}^{b}(\CC^{n-r}) q^b.
\een
Summing over all $n\geq 1$, we get $r\partial_{\nu_r} f(\nu,q)=\frac{1}{1-q^r} f(\nu,q)$. Solving this differential equation with the initial condition $f(0,q)=1$ yields the formula that we had to prove. 

\subsection{Proof of Theorem \ref{thm:corr-0}}

Since $J(\nu,q)=(1-q)S(\nu,q)^{-1}1,$ we get 
\ben
S(\nu,q)=\exp\Big( 
\frac{\nu_1}{q-1} +\sum_{k=2}^\infty \frac{\nu_k}{k(q^k-1)}\Big)
\een
and 
\ben
G(\nu)= 1+\langle 1,1\rangle_{0,2}(\nu)= \partial_{\nu_1} J(\nu,0)= 
\exp\Big(
\nu_1+\sum_{k=2}^\infty \frac{\nu_k}{k}
\Big).
\een
Put
\ben
w(\mathbf{t}):=\sum_{n=0}^\infty \frac{1}{n!}
\langle 1,1 ,\mathbf{t},\dots,\mathbf{t}\rangle_{0,n+2}(0,\nu_2,\nu_3,\dots)
\een
and let $v(\mathbf{t})$ be defined as the solution to the equation 
\ben
w(\mathbf{t})= J(v(\mathbf{t}),0)-1=\exp\Big(
v(\mathbf{t}) + \sum_{k=2}^\infty \frac{\nu_k}{k}\Big)-1.
\een
Recalling Theorem \ref{thm:top_sol}, we get that $v(\mathbf{t})$ is a solution to the following system of differential equations:
\ben
\partial_n v = -\op{Res}_{q=\infty} dq (q-1)^n \partial \,
\exp\Big(
\frac{v}{q-1} +\sum_{k=2}^\infty \frac{\nu_k}{k(q^k-1)}\Big),
\een
where $\partial_n:=\tfrac{\partial }{\partial t_n}$ and $\partial=\partial_0$.
On the other hand, since
$\frac{1}{1-qL}=\frac{1}{1-q}\, 
\sum_{k=0}^\infty \frac{q^k}{(1-q)^k}\, (L-1)^k$, we get that the insertion of $\frac{1}{1-qL}$ is obtained from $w(\mathbf{t})$ via the action of the differential operator 
\ben
P(q)=\frac{1}{1-q}\, 
\sum_{k=0}^\infty \frac{q^k}{(1-q)^k}\, \partial_{k} = 
\frac{1}{1-q}\, 
\sum_{k=0}^\infty \frac{1}{(q^{-1}-1)^k}\, \partial_{k} ,
\een
that is, the correlator on the LHS in Theorem \ref{thm:corr-0} is $P(q_1)\cdots P(q_n) w(\mathbf{t})|_{t_0=\nu_1,t_1=t_2=\cdots = 0}$. 

Let us examine the action of $P(q)$ on 
\ben
w_a(\mathbf{t})= \exp\Big( a_1 v(\mathbf{t}) + \sum_{k=2}^\infty a_k \frac{\nu_k}{k}\Big)
\een
where $a=(a_1,a_2,a_3,\dots)$ is an arbitrary sequence of complex numbers. We have
\ben
(1-q)P(q) w_a(\mathbf{t})=\sum_{n=0}^\infty 
\frac{a_1}{(q^{-1}-1)^n}\, \partial_n v \, 
\exp\Big( a_1 v(\mathbf{t}) + \sum_{k=2}^\infty a_k \nu_k/k\Big).
\een
Recalling the differential equation for $v$, we get 
\ben
(1-q)P(q) w_a(\mathbf{t}) & = & 
- a_1
\exp\Big(a_1v(\mathbf{t}) + \sum_{k=2}^\infty a_k \nu_k/k\Big)\times\\
&&
\times
\sum_{n=0}^\infty 
\op{Res}_{u=\infty} du 
\frac{(u-1)^n}{(q^{-1}-1)^n}\, \partial \,
\exp\Big(\frac{v}{u-1} +\sum_{k=2}^\infty \frac{\nu_k}{k(u^k-1)}\Big).
\een
The residues in the above formula can be interpreted analytically as
follows. We expand the exponential as a formal power series in
$\mathbf{t},\nu_2,\nu_3,\dots$, then each coefficient is a rational
function in $u$ with poles on the unit circle. Therefore, we may and
we will think of each residue as $\tfrac{1}{2\pi
  \mathbf{i}}\oint_{|u-1|=R}$ where $R\gg 1$ is a sufficiently big
real number and the orientation of the contour is clockwise around the
center $1$. Let us choose $q$ so small that $|u-1|<|q^{-1}-1|$. The
sum over $n$ of the geometric series is uniformly convergent in $u$ to $\tfrac{1-q}{1-qu}$ and we get 
\ben
P(q) w_a(\mathbf{t})=- a_1
\exp\Big(a_1 v(\mathbf{t}) + \sum_{k=2}^\infty a_k \nu_k/k\Big)\,
\frac{\partial}{2\pi \mathbf{i}}\, \oint_{|u-1|=R} 
\exp\Big(\frac{v}{u-1} +\sum_{k=2}^\infty \frac{\nu_k}{k(u^k-1)}\Big)
\frac{du}{1-qu}.
\een 
Recalling the Cauchy residue theorem we get that the above integral is the sum of the residues at $u=q^{-1}$ and $u=\infty$. The residue at $u=\infty$ is a constant which does not contribute at the end because we have to apply to it the derivation $\partial$. We get 
\ben
P(q) w_a(\mathbf{t})=a_1 q^{-1}
\exp\Big(a_1 v(\mathbf{t}) + \sum_{k=2}^\infty a_k \nu_k/k\Big)\, \partial
\,
\exp\Big(\frac{v}{q^{-1}-1} +\sum_{k=2}^\infty \frac{\nu_k}{k(q^{-k}-1)}\Big).
\een
Note that the above formula is equivalent to 
\ben
P(q) w_a(\mathbf{t})=
\frac{1}{1-q}\, \frac{a_1}{a_1+\frac{1}{q^{-1}-1}} \, 
\partial\, 
\exp\left(\Big(a_1+\frac{1}{q^{-1}-1}\Big) v +
\sum_{k=2}^\infty  \Big(a_k+ \frac{1}{q^{-k}-1} \Big) \frac{\nu_k}{k}
\right).
\een
Using the above formula we get 
\ben
& & P(q_1)\cdots P(q_n) w(\mathbf{t}) = 
\frac{1}{(1-q_1)\cdots (1-q_n)} \, 
\frac{1}{1+\frac{1}{q_1^{-1}-1} + \cdots + \frac{1}{q_n^{-1}-1} }\times \\
&&
\times \partial^n\, \exp\left(
\Big(1+\frac{1}{q_1^{-1}-1}+\cdots + \frac{1}{q_n^{-1}-1} \Big) v(\mathbf{t}) +
\sum_{k=2}^\infty  
\Big(1+ \frac{1}{q_1^{-k}-1} +\cdots + \frac{1}{q_n^{-k}-1} \Big) \frac{\nu_k}{k}
\right).
\een
Note that if we set $t_1=t_2=\cdots =0$, then 
\ben
w(t_0,0,0,\dots)= G(t_0,\nu_2,\nu_3,\dots)-1= e^{t_0+\sum_{k=2}^\infty \frac{\nu_k}{k}}-1.
\een
Therefore, $v(t_0,0,0,\dots)=t_0$ and the formula that we had to prove follows.
\qed


\bibliographystyle{plain}
\bibliography{fock-arXiv}

\end{document}